%%%%% this a version of our text that does not include discussion of the lines. also the formal  discussion is
%%%%% is removed.
%%%%%
%%%%%
%%%%%% In this version we include a version of Negut's formula for the free Hilb scheme.

\documentclass[12pt]{amsart}%

\usepackage{amsmath, amssymb, amsthm, amsfonts, bbm, dsfont}
\usepackage{graphicx}
\usepackage{wrapfig}
\usepackage{mathrsfs}

\usepackage{hyperref}

\usepackage{a4wide}

\usepackage{marginnote}

\numberwithin{equation}{section}
\theoremstyle{plain}
\newtheorem{theorem}[subsubsection]{Theorem}

 \newtheorem{proposition}[subsubsection]{Proposition}
 \newtheorem{corollary}[subsubsection]{Corollary}
 \newtheorem{conjecture}[subsubsection]{Conjecture}

 \theoremstyle{definition}
 \newtheorem{defn}[subsubsection]{Definition}

\newtheorem{remark}[subsubsection]{Remark}

%%%%%%%%%%%%%%%%%%%%%%%%%%%%%%%%%%%%%%%%%%%%%%%%%%%%%%%%%%%%%
%%%%%% End of the header  %%%%%%%%%%%%%%%%%%%%%%%%%%%%%%%%%%
%%%%%%%%%%%%%%%%%%%%%%%%%%%%%%%%%%%%%%%%%%%%%%%%%%%%%%%%%%%%

\usepackage{pictexwd}
\usepackage{stackengine}
\usepackage{mathtools}
\usepackage{amssymb}
\usepackage{dsfont}

\usepackage{mathabx}

%\input{/Applications/TeX/MyHeaders/StdDefs.tex}
%\input{StdDefs.txt}

%%%%%%%%%%%%%%%%%%%%%%%%%%%%%%%%%%%%%%%%%%%%%%%%%%%
%%%%%% Standard definitions %%%%%%%%%%%%%%%%%%%%%%%
%%%%%%%%%%%%%%%%%%%%%%%%%%%%%%%%%%%%%%%%%%%%%%%%%%%

%%%%%%%%%%%% mathrm %%%%%%%

\newcommand{\rmT}{\mathrm{T}}

%%%%%%%%% mathbb %%%%%%%%

\newcommand{\CC}{\mathbb{C}}

\newcommand{\QQ}{\mathbb{Q}}
\newcommand{\ZZ}{\mathbb{Z}}
%%%%%%%%%% mathsf %%%%%%%

%%%%%%%%% mathscr %%%%%%%%
\newcommand{\scZ}{\mathscr{Z}}
\newcommand{\scz}{\mathscr{Z}^0}
\newcommand{\scC}{\mathscr{C}}
\newcommand{\scc}{\mathscr{C}}
\newcommand{\scX}{\mathscr{X}}

%%%%%%%%%% mathbf %%%%%%%%%%

%%%%%%%%% mathcal %%%%%%%%%%
\newcommand{\calc}{\mathcal{C}}
\newcommand{\calE}{\mathcal{E}}
\newcommand{\calF}{\mathcal{F}}

\newcommand{\calG}{\mathcal{G}}
\newcommand{\calO}{\mathcal{O}}

\newcommand{\calR}{\mathcal{R}}
\newcommand{\calS}{\mathcal{S}}
\newcommand{\calB}{\mathcal{B}}

\newcommand{\calL}{\mathcal{L}}
\newcommand{\calD}{\mathcal{D}}

\newcommand{\caln}{\mathcal{N}}

%%%%%%%%%% mathfrak %%%%%%%%%%

\newcommand{\frg}{\mathfrak{g}}

\newcommand{\frb}{\mathfrak{b}}

\newcommand{\frn}{\mathfrak{n}}

%%%%%%%% tilde %%%%%%%%%

%%%%%%%% hat %%%%%%%%%

%%%%%%%% check %%%%%%%%

%%%%%% bar %%%%%%%%%

%%%%%%%% standard operators %%%%%%%%

\newcommand\aff{\textup{aff}}

\newcommand{\gr}{\textup{gr}}

\newcommand\Lie{\textup{Lie}\ }
\newcommand\loc{\textup{loc}}
\newcommand\Loc{\textup{loc}}
\newcommand\Mod{\textup{Mod}}

\newcommand\Spec{\textup{Spec} }

\newcommand\st{\textup{st}}

\newcommand{\Tr}{\textup{Tr}}

\newcommand{\Coh}{\textup{Coh}}

\newcommand\Hom{\textup{Hom}}

%%%%%%%% Lie groups and algebras %%%%%%%
\newcommand\GL{\textup{GL}}
\newcommand\gl{\mathfrak{gl}}

\newcommand{\Ad}{\textup{Ad}}

%%%%%%% common symbols %%%%%%%%

\newcommand{\Id}{\mathrm{Id}}

\newcommand{\quash}[1]{}

\newcommand{\pt}{\mathsf{p}}

  % Kac-Moody
\newcommand{\wt}{\widetilde}

  % 2-by-2 matrix
\newcommand{\und}[1]{\underline{#1}}

%%%%%%%%%%%%%% algebras %%%%%%%%%%%

\newcommand{\Br}{\mathfrak{Br}}
\newcommand{\PBr}{\mathfrak{PBr}}

%%%%%%%%%%%%%%%%%%%%%%%%%%%%%%%%%%%%%%%%%%%%%%%%%%%
%%%%%% End of standard definitions %%%%%%%%%%%%%%%%
%%%%%%%%%%%%%%%%%%%%%%%%%%%%%%%%%%%%%%%%%%%%%%%%%%

\usepackage{tikz}
\usetikzlibrary{shapes,fit,intersections}
\usetikzlibrary{decorations.markings}
\usetikzlibrary{decorations.pathreplacing}
\usetikzlibrary{patterns}
\usetikzlibrary{matrix,arrows}
%\usetikzlibrary{cd}
\usepgflibrary{decorations.pathmorphing}

\usepackage{tikz-cd}

\newcommand{\Hilb}{\textup{Hilb}}

\newcommand{\MF}{\mathrm{MF}}
\newcommand{\MMF}{\mathbf{MF}}

\newcommand{\calX}{\mathcal{X}}
\newcommand{\MFs}{\mathrm{MF}}
\newcommand{\calZ}{\mathcal{Z}}
\newcommand{\calC}{\mathcal{C}}
\newcommand{\calY}{\mathcal{Y}}

\newcommand{\dg}{\mathrm{dg}}
\newcommand{\per}{\mathrm{per}}

%%%%%%%%%%% Spaces %%%%%%%%%%%%%%

% \newcommand{\pt}{\mathsf{p}}
\newcommand{\scY}{\mathscr{Y}}
\newcommand{\FHilb}{\mathrm{FHilb}} 
\newcommand{\Fl}{\mathrm{Fl}}
 
%\newcommand{\whilb}{\widetilde{\mathrm{Hilb}}}

%%%%%%%%%%%%%%%%% Algebras %%%%%%%%
\newcommand{\frh}{\mathfrak{h}}

%%%%%%%%%%%%%%%%% Homology %%%%%%%%%
\newcommand{\CE}{\mathrm{CE}}
\newcommand{\ChE}{Chevalley-Eilenberg }

%%%%%%% Operations%%%%%%%%%%%
%\newcommand{\odel}{\stackon{$\otimes$}{$\scriptstyle\Delta$}}
\newcommand{\odel}{\overset{\scriptstyle\Delta}{\otimes}}

%%%%%%%%% Maps  %%%%%%%%%%%%%%

%\newcommand{\ind}{\mathrm{ind}}

%%%%%%%%%%%%%%%% Categories and related %%%%%%%%%%%%%%%%%

\newcommand{\CH}{\mathrm{CH}}
\newcommand{\Dr}{\mathrm{Dr}}
\newcommand{\dr}{\mathrm{Dr}}
\newcommand{\HC}{\mathrm{HC}}

%\newcommand{\dg}{\mathrm{dg}}

%%%%%%%%%%%%%%%%%%%

\def\Hilb{ \mathrm{Hilb}}

%%%%%%%%%% Matrix Factorizations %%%%%%%

%%%%%%%%%%%%%%%%% Lev's

%%%%%%%%%%%%%%%%%%%

%\usepackage{calrsfs}
%\DeclareMathAlphabet{\pazocal}{OMS}{zplm}{m}{n}

\title{Categorical Chern character and braid groups}
\author{A. Oblomkov}
\address{
A.~Oblomkov\\
Department of Mathematics and Statistics\\
University of Massachusetts at Amherst\\
Lederle Graduate Research Tower\\
710 N. Pleasant Street\\
Amherst, MA 01003 USA
}
\email{oblomkov@math.umass.edu}

\author{L. Rozansky}
%\author[L.~Rozansky]{Lev Rozansky}
\address{
L.~Rozansky\\
Department of Mathematics\\
University of North Carolina at Chapel Hill\\
CB \# 3250, Phillips Hall\\
Chapel Hill, NC 27599 USA
}
\email{rozansky@math.unc.edu}

%\author{Alexei Oblomkov and Lev Rozansky}
\begin{document}
\maketitle
\begin{abstract}
To a braid \(\beta\in \Br_n\) we associate a complex of sheaves \(S_\beta\) on \(\Hilb_n(\CC^2)\) such that the previously defined triply graded link homology of the closure \(L(\beta)\) is isomorphic to the homology of \(S_\beta\).
  %We provide an interpretation of the triply graded link invariant of \(L(\beta)\), \(\beta\in \Br_n\) from our previous work
%  as a space of section of the sheaf \(S_\beta\) on \(\Hilb_n(\CC^2)\).
%
  The construction of \(S_\beta\) relies on the Chern functor
  \(\CH\colon \MF_n^{\mathrm{st}}\to \mathrm{D}^{\mathrm{per}}_{\CC^*\times\CC^*}(\Hilb_n(\CC^2))\)  defined in the paper together with its adjoint functor
  \(\HC\).
  %The properties of these functors lead us to a conjecture that \(\HC\) sends 
  %\(\mathrm{D}^{\mathrm{per}}_{\CC^*\times\CC^*}(\Hilb_n(\CC^2))\)
  %to  the Drinfeld center of \(\MF_n^{\st}\).
  We prove a
  formula for the closure  of  sufficiently   positive elements of the Jucys-Murphy algebra previously conjectured
  by Gorsky, Negut and Rasmussen.

\end{abstract}

%%%%%%%%%%%% end of Lev's definitions %%%%%%%%%%

%%%%%%%%%%%%%%%%%%%%%%%%%%%%%%%%%%%%%%%%%%%%%%%%%%%%%%5

%%%%%%%%%%%%%%%%%%%%%%%%%%%%%%%%%%%%%%%%%%%%%%%%%%%%%%5

\section{Introduction}
\label{sec:introduction}

\def\rloc{\mathrm{loc}}
\def\rfs{\mathrm{fs}}
\def\raf{\mathrm{af}}
\def\rst{\mathrm{st}}
\def\rfr{\mathrm{fr}}
\def\rper{\mathrm{per}}
\def\rloc{\mathrm{loc}}
\def\rcnv{\mathrm{cnv}}
\def\rcv{\mathrm{cv}}
\def\raff{\mathrm{aff}}
\def\rfree{\mathrm{free}}
\def\rlin{\mathrm{lin}}
\def\reven{\mathrm{even}}
\def\rfc{\mathrm{fc}}

\def\MFfs{\MF^{\rfs}}
\def\rmT{ \mathrm{T}}
\def\Ts{ \rmT^* }
\def\TsFl{ \Ts\Fl}
\def\GLn{ \GL_n }
\def\gl{\mathfrak{gl}}
\def\gln{\gl_n}
\def\glns{ \gln^* }
\def\CCn{ \CC^n }
\def\Wfr{W^\rfr}
\def\CHfl{\CH^{\rfs}_{\rloc}}
\def\HCfl{\HC^{\rfs}_{\rloc}}
\def\MFG{\MF_G}
\def\Wdr{ W_{\dr} }

\def\rmD{\mathrm{D}}
\def\Dper{ \rmD^{\rper}}
\def\DperG{ \Dper_{G}}

\def\amu{ \mu }

\def\Tqt{ \mathbb{T}_{q,t} }
\def\CCsq{ \CC^*_q }
\def\CCst{ \CC^*_t }
\def\DT{ \mathrm{D}_{\Tqt} }
\def\DperT{ \Dper_{\Tqt} }
\def\DperGBd{ \Dper_{\GBd} }
\def\Bd{ B_{\Delta}}
\def\GBd{ G\times \Bd }
\def\MFT{\MF^{\Tqt}}
\def\MFGLT{\MFT_{\GLn}}
\def\MFGsLT{\MFT_{G}}
\def\MFGBtT{ \MFT_{\GBt}}
\def\MFGBdT{ \MFT_{\GBd}}
\def\GBt{G\times B^2}

\def\xId{ \mathbbm{1}}
\def\Adv#1{ \mathrm{Ad}_{#1}}

\def\sczCH{ \scz_{\CH} }
\def\scZCH{ \scZ_{\CH} }
\def\scZCHd{ \scX_{\Delta}}
\def\scXdg{\tilde{\scX}_{\Delta}}

\def\grb{ b }
\def\grbv#1{ \grb_{#1}}
\def\grbo{ \grbv{1}}
\def\grbt{ \grbv{2}}

\def\fD{ f_{\Delta}}
\def\fDs{\fD^*}
\def\fDls{ f_{\Delta,*}}
\def\pDr{ \pi_\Dr}
\def\tpDr{ \tilde{\pi}_{\Dr}}
\def\pDrs{ \pi_{\Dr,*} }
\def\pDrus{ \pDr^* }

\def\frnD{ \frn_{\Delta}}

\def\WCHx{ W_{\CH}}
\def\WCHxd{ W_{\Delta}}
\def\gp{ g' }
\def\cNt{ \tilde{\mathcal{N}}}

\def\HHH{ \mathrm{H}}
\def\HHHb{\HHH^\bullet }

\def\emi{\iota}

\def\lhs{l.h.s.\ }
\def\aff{\mathrm{aff}}

\def\mJM{ \delta}
\def\mJMv#1{ \mJM^{#1}}
\def\mJMa{ \mJMv{\vec{a}}}
\def\fJM{ \mathfrak{JM}}
\def\fJMn{\fJM_n}

\def\ytb{\mathbf{T}}

\def\vrho{\vec{\rho}}
\def\cLva{ \calL^{\vec{a}} }
\def\cLvr{ \calL^{\vrho}}

\def\Lva{ \mJMa }

\def\NS{ N\!S }
\def\NSn{ \NS_n }

\def\tzeta{\tilde{\zeta}}
\def\shQ{\mathbf{Q}}
\def\shT{ \mathbf{T}}
\def\xTot{ \mathrm{Tot}}

\def\vek{\vec{k}}
\def\dvek{\delta^{\vek}}

In \cite{OblomkovRozansky16} we constructed a homomorphism \(\Phi\) from the braid group \(\Br_n \) to the convolution algebra of
\(\GL_n\times \Tqt\)-equivariant, where  \(\Tqt=\CCsq\times\CCst\),
matrix factorizations:
%\footnote{In the main body of the paper we  only use equivariant matrix
%  factorizations on the smooth quasi-affine spaces. In the introduction we refer  to the
%equivariant matrix factorization as objects on the quotient to simplify notations.}:
\begin{equation}
\label{eq:stfrmf}
\MF^{\st}=\MF\left(\big(\gln\times V_G\times \TsFl\times\TsFl\big)^{\rst},\GL_n\times \Tqt, tq^0, W\right), \quad
W=\amu_1-\amu_2,
\end{equation}
where \(\TsFl=(\frn\times\GLn)/B\) is the  cotangent vector bundle to the flag variety, $\amu\colon\TsFl\to \glns
%\times (\CC^n)^*
$ is the corresponding generalized moment map and $(\cdots)^{\rst}$ denotes an open  stability condition defined later. The factor \(V_G\) in the definition of the
last space is the tautological representation \(\CC^n\) of \(\GL_n\)
The categorical representation of the braid group was used to construct a triply-graded link invariant (homology):
\[\HHH(\beta)=\mathbb{H}(\mathcal{E}\mathrm{xt}(\Phi(\beta),\Phi(1))\otimes \Lambda^\bullet V_G^\vee)^{\GL_n},\]
where \(V_G^\vee\) is the \(\GL_n\)-repsentation dual to \(V_G\)
and $\mathbb{H}(\dots)$ is the hypercohomology.
%On \(\GL_n\)-quotient \(V^\vee_G\) turns
%to a tautological vector bundle \(\calB\), on the \(\GL_n\)-quotient.

The $q$- and $t$-gradings on \(\HHH(\beta)\)
are weights of $\Tqt$, while the a-grading is the exterior power in $\Lambda^\bullet V^\vee_G$. It was shown in \cite{OblomkovRozansky20}
 that the triply-graded homology
discussed in this paper coincide with the original categorification of HOMFLYPT polynomial \cite{KhovanovRozansky08b}, \cite{Khovanov07}.

In this paper we construct a pair of functors which we call a Chern functor and a co-Chern functor:
\begin{equation}
\label{eq:mnchcchf}
  \begin{tikzcd}
    \MF^{\st}\arrow[rr,bend left,"\CH^{\st}_{\loc}"]&& \mathrm{D}^{\rper}(\Hilb)\arrow[ll,bend left,"\HC^{\st}_{\loc}",pos=0.435]
  \end{tikzcd},
\end{equation}
where \(\Hilb\)  is a  Hilbert scheme of \(n\) points on \(\CC^2\), while \(\mathrm{D}^{\rper}(\Hilb)=\mathrm{D}^{\rper}(\Hilb,\Tqt,tq^0)\) is the derived category 
two-periodic \(\Tqt\)-equivariant complexes on \(\Hilb\).
In a subsequent paper~\cite{OblomkovRozansky18b} we  put these functors in context of a 2-category of a 3D TQFT (equivariant B-model).
%We provide more motivation for the construction
%of the functors in our subsequent where we explain the relevant 3D QTFT and put the Chern character functor in higher-categorical
%context .
Here we explore the properties of the functors  and their interaction with the monoidal
structure \(\star\) on \(\MF^{\st}\) used in the construction of \(\Phi\).
We define monoidal structure \((\MF^{\st},\star)\) in section~\ref{sec:properties} and 
show the following

\begin{theorem}[Corollary~\ref{cor:main1}]\label{thm:HC-CH}
 
There is a monoidal structure \(\star\) on \(\mathrm{D}^{\rper}(\Hilb)\) such that
\begin{itemize}
\item the functor \(\CH^{\rst}_{\rloc}\) is a right adjoint of \(\HC^{\rst}_{\rloc}\),
\item the functor \(\HC^{\rst}_{\rloc}\) is monoidal,
\item the image of \(\HC^{\rst}_{\rloc}\) commutes with the elements \(\Phi(\beta)\), \(\beta\in \Br_n\).
\end{itemize}
  
\end{theorem}

%Let us also remark that the \(\CH^{\rst}_{\rloc}\) is monoidal with respect to the non-standard
%mononoidal structure \(\tilde{\otimes}\).
The monoidal structure \(\star\) on \(\mathrm{D}^{\rper}(\Hilb)\) is defined in section~\ref{sec:local-funct},
and is expected to be a  deformation of \(\otimes\), see proposition~\ref{prop:compare-star} for the details. The existence of a non-trivial deformations of the tensor
product for sheaves on the holomorphic symplectic variety was predicted by Kapustin-Rozansky in \cite{KapustinRozansky10}.

Finally, as a manifestation of the categorified Riemann-Roch formula, we obtain a new interpretation for the triply-graded homology as hyper-cohomology of
a particular two-periodic complex of \(\Tqt\)-equivariant sheaves:
\begin{theorem}[Theorem~\ref{thm:HC-homology}]\label{thm:main}
  For any \(\beta\in \Br_n\) we have:
  \[\HHH(\beta)=\mathbb{H}(\CH^{\rst}_{\rloc}(\Phi(\beta))\otimes \Lambda^\bullet\calB))
        \]
    Moreover, for any \(k\) we have
    \[
    \CH^{\rst}_{\rloc}(\Phi(\beta\cdot FT^k))=
    \CH^{\rst}_{\rloc}(\Phi(\beta))\otimes \det(\calB)^k\]
  where \(FT\) is the full-twist braid.
\end{theorem}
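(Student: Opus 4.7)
Both identities are formal consequences of the three bulleted properties of \((\CHfl,\HCfl)\) — adjunction, monoidality of \(\HCfl\), and centrality of its image under convolution by \(\Phi(\beta)\) — together with one non-formal geometric input: the identification \(\Phi(FT)\simeq \HCfl(\det\calB)\) and its companion \(\HCfl(\Lambda^\bullet\calB)\simeq \Phi(1)\otimes\Lambda^\bullet\calB_{\MF}\), where \(\calB_{\MF}\) denotes the appropriate tautological object on the \(\MF^{\st}\)-side. The key intermediate tool is a \emph{projection formula}
\[
\CHfl\bigl(X\cdot\HCfl(Y)\bigr)\;\simeq\;\CHfl(X)\otimes Y,
\]
which I would derive formally by Yoneda from the adjunction together with the monoidal structure on both sides (the centrality of the image of \(\HCfl\) is used to move \(\HCfl(Y)\) past \(X\) without worrying about the order of convolution).

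For the first identity, I would start from the right-hand side, pull \(\Lambda^\bullet\calB\) inside \(\CHfl\) via the projection formula in reverse, and then apply the adjunction:
\begin{align*}
\Hom\bigl(\calO,\,\CHfl(\Phi(\beta))\otimes\Lambda^\bullet\calB\bigr)
&\simeq \Hom\bigl(\calO,\,\CHfl(\Phi(\beta)\cdot \HCfl(\Lambda^\bullet\calB))\bigr)\\
&\simeq \Hom\bigl(\HCfl(\calO),\,\Phi(\beta)\cdot \HCfl(\Lambda^\bullet\calB)\bigr)\\
&= \Hom\bigl(\Phi(1),\,\Phi(\beta)\cdot \HCfl(\Lambda^\bullet\calB)\bigr),
\end{align*}
where the last equality uses monoidality, which forces \(\HCfl(\calO_\Hilb)\simeq\Phi(1)\). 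Identifying \(\HCfl(\Lambda^\bullet\calB)\) with tensor by the \(a\)-grading bundle on the \(\MF^{\st}\)-side, and rewriting the resulting internal \(\Hom\) in \(\MF^{\st}\) as hypercohomology of the \(\mathcal{E}\mathrm{xt}\)-sheaf (with argument order reconciled via self-duality of the unit \(\Phi(1)\)) recovers \(\HHH(\beta)\).

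For the second identity, since \(\Phi\) is a braid-group homomorphism,
\[
\Phi(\beta\cdot FT^k)=\Phi(\beta)\cdot\Phi(FT)^k
\simeq \Phi(\beta)\cdot \HCfl(\det\calB)^k
\simeq \Phi(\beta)\cdot \HCfl(\det\calB^{\otimes k}),
\]
using \(\Phi(FT)\simeq\HCfl(\det\calB)\) and monoidality of \(\HCfl\). Applying \(\CHfl\) and the projection formula produces \(\CHfl(\Phi(\beta))\otimes\det\calB^{\otimes k}\), as required.

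\textbf{Main obstacle.} The whole chain is formal once \(\Phi(FT)\simeq \HCfl(\det\calB)\) (and its \(\Lambda^\bullet\) analogue) is established. This non-formal geometric input is not a consequence of the three listed bulleted properties: it requires tracing \(\HCfl\) through its explicit construction on the cotangent bundle of the flag variety and matching the \(\Tqt\)-equivariant weights with those defining the full-twist braid and the \(a\)-grading. I expect this to be the technically intricate step; the adjunction/monoidality machinery above then converts it into the stated categorical Chern-character identities.
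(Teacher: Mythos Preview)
Your overall architecture is sound --- in particular, the adjunction step and the identification \(\HCfl(\calO)\simeq\Phi(1)=\calC_\parallel\) are exactly what the paper uses --- but your route to the two identities is more circuitous than the paper's, and the step you flag as the ``main obstacle'' is in fact bypassed entirely.

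The paper never invokes a projection formula \(\CHfl(X\star\HCfl(Y))\simeq\CHfl(X)\otimes Y\), nor does it need to identify \(\Phi(FT)\) or \(\Lambda^\bullet\calB\) as lying in the image of \(\HCfl\). Instead it exploits the fact that the tautological bundle \(V\) and the line bundles \(\calL_i\) already live on \(\scX^{\rfs}\), and proves directly (as an elementary consequence of the definition of \(\CHfl\)) that \(\CHfl(\calC\otimes (V^*)^{\otimes m})=\CHfl(\calC)\otimes\calB^{\otimes m}\), with the symmetric group action intertwined. For the first identity this means one simply writes
\[
\HHH(\beta)\simeq\mathcal{E}\mathrm{xt}(\calC_\parallel,\calC_\beta\otimes\Lambda^\bullet V^*)
=\mathcal{E}\mathrm{xt}(\HCfl(\calO),\calC_\beta\otimes\Lambda^\bullet V^*)
=\mathcal{E}\mathrm{xt}(\calO,\CHfl(\calC_\beta\otimes\Lambda^\bullet V^*))
=\mathcal{E}\mathrm{xt}(\calO,\CHfl(\calC_\beta)\otimes\Lambda^\bullet\calB),
\]
with the last step being the intertwining proposition rather than your projection formula. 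For the full-twist identity, the non-formal input is not \(\Phi(FT)\simeq\HCfl(\det\calB)\) but rather the result of \cite{OblomkovRozansky17} that \(\Phi(\delta^{\vec{a}})=\calC_\parallel\otimes\calL^{\vec{a}}\); since \(FT=\delta^{\vec{1}}\), one gets \(\Phi(\beta\cdot FT^k)=\Phi(\beta)\otimes\calL^{k\vec{1}}\) directly, and the same intertwining proposition (for line bundles) converts \(\calL^{\vec{1}}\) into \(\det\calB\) after applying \(\CHfl\).

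In short: your detour through \(\HCfl\) and the projection formula would work, and your ``obstacle'' \(\Phi(FT)\simeq\HCfl(\det\calB)\) is in fact true (it unwinds to \(\calC_\parallel\otimes\calL^{\vec{1}}\) on both sides), but the paper's path is shorter because it keeps the tautological-bundle twists on the \(\MF\) side throughout and only crosses to the Hilbert scheme at the very end via a one-line intertwining statement.
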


In the last theorem we used tautological vector bundle on \(\calB\) on \(\Hilb\).
For \(I\subset\CC[x,y]\), \(I\in \Hilb\) the fiber \(\calB_I\) at \(I\) is dual to the
quotient \(\CC[x,y]/I\).

The advantage of this new interpretation is that the Hilbert scheme is smooth, unlike the very singular  flag Hilbert scheme  \(\mathrm{FHilb}_n\) which 
is a homological support of the complex of sheaves \(\mathbb{L}(\Phi(\beta))\) used in our previous construction of knot homology \cite{OblomkovRozansky16}. In section~\ref{sec:knot} we remind the construction of two periodic complex \(\mathbb{L}(\Phi(\beta))\) of \(B\)-equivariant coherent sheaves on
the stable part of  \(\frb\times \frn\), the homology of complex are sheaves supported on the locus  of commuting matrices.

As an example, we apply the Chern functor to  sufficiently positive Jucys-Murphy braids in $\Br_n$ and use a combination of the
vanishing higher cohomology of an 'unfolding' of the resulting complexes of sheaves together with localization formula in order to establish an explicit formula for their link homology. Jucys-Murphy elements of \(\Br_n\) are naturally labeled by
the \(n\)-tuple of integers \(\vec{a}\in \ZZ^{n-1}\) and the sufficiently positive elements have all entries sufficiently positive and sufficiently positive differences \(a_i-a_{i-1}\),
as explained below.

Recall that the JM subgroup $\fJMn\subset\Br_n$ is generated by elementary JM braids
\[
\delta_i=\sigma_i\sigma_{i+1}\dots\sigma_{n-1}^2\dots\sigma_{i+1}\sigma_i,
\]
and a JM braid can be presented as their product:
\[
\delta^{\vec{a}}:= \prod_{i=2}^{n}\delta^{a_i}_i,\qquad \text{where}\quad
\vec{a} = (a_1,\ldots,a_{n-1})\in\ZZ^{n-1}.
\]
Let notice that \(FT= \delta^{\vec{1}}\),\(\vec{1}=(1,\dots,1)\).
%For a vector $\vec{a} = (a_1,\ldots,a_{n-1})$ denote the corresponding JM braid as
%\[
%\mJMa:= \prod_{i=2}^{n}\delta^{a_i}.\]
%Denote a special element of $\ZZ^{n-1}$ as
%\(\vec{1}=(1,\dots,1)\) and \(\vec{\rho}=(1,2,\dots,n-1)\).
%
%
%We denote as $\vec{a} = (a_1,\ldots,a_{n-1})$ the elements of $\ZZ^{n-1}$ and use a special notation for two such vectors:
%%Denote two elements of $\ZZ^{n-1}$ as
%\(\vec{1}=(1,\dots,1)\) and \(\vec{\rho}=(1,2,\dots,n-1)\).
%We propose the following geometric parity  conjecture which we plan to revisit in the later paper:

% \begin{conjecture}\label{conj:parity}
%   For any $\vec{b}\in \ZZ^{n-1}_{>0}$ there is  an integers $K$ such that for any \(k>K\), if \(\vec{c}=\vec{b}+k\vec{\rho}\),
%   then \(\CH^{\rst}_{\rloc}(\Phi(\delta^{\vec{c}}))\) is a quasi-isomorphic to a sheaf (not just a two-periodic complex) supported in the even
%   homological degree.
% \end{conjecture}

% \begin{theorem}\label{conj:parity}
%   For any $\vec{b}\in \ZZ^{n-1}_{>0}$
%   %there is  an integers $K$ such that for any \(k>K\), if \(\vec{c}=\vec{b}+k\vec{\rho}\),
%    \(\CH^{\rst}_{\rloc}(\Phi(\delta^{\vec{b}}))\) is a quasi-isomorphic to a sheaf (not just a two-periodic complex) supported in the even
%   homological degree and even \(t\)-degree.
% \end{theorem}

%The theorem implies the following formula 
\begin{theorem}[Corollary~\ref{cor:chi-JM} and section~\ref{sec:proof-JM}]\label{thm:loc}
  For any $\vec{a}\in \ZZ^{n-1}$ such that
  \(a_i\ge a_j-1\) for any \(i,j\) such that
  \(i>j\).
  there is a  integer $M$ such that for any
  \(c>M\), if \(\vec{f}=\vec{a}+c \vec{1}\),
    %  \(\vec{1}=(1,\dots,1)\), \(\vec{\rho}=(1,2,\dots,n-1)\)
    then
 $(q,t,a)$-character of the homology of $\mJMa$  is given by the formula
 %\(\HHH(\prod_{i=2}\delta^{b_i})\) is given by
 \[
  \chi_{a,q,t}\left(\HHH(\delta^{\vec{a}+c\vec{1}})\right) =
  %\prod_{i=2}^{n}\delta^{a_i})=
\sum_{\ytb} \prod_i\frac{z_i^{f_i}(1+az_i^{-1})}{1-z^{-1}_i}\prod_{1\le i<j\le n}\zeta(\frac{z_i}{z_j}),\]
 % \[\delta_i=\sigma_i\sigma_{i+1}\dots\sigma_{n-1}^2\dots\sigma_{i+1}\sigma_i.\]
  where \[\zeta(x)=\frac{(1-x)(1-QTx)}{(1-Qx)(1-Tx)},\qquad Q=q^2,\qquad T=t^2/q^2,\]
  while $\ytb$ is the set of all standard Young tableaux, we denote
  \(z_i=Q^{\mathbf{a}'(i)}T^{\mathbf{l}'(i)}\) and \(\mathbf{a}'(i),\mathbf{l}'(i)\) are co-arm and co-leg of the $i$-labeled  square in the standard tableau with \(n\) squares.
\end{theorem}

The  formula in the theorem was conjectured in \cite{GorskyNegutRasmussen16} for the Soergel bimodule triply-graded homology,
some variants of this formula were proven in \cite{Mellit17}, \cite{Hogancamp17}, \cite{EliasHogancamp16}. Let us also
point out that the relation between the coherent sheaves on the Hilbert scheme of points and the triply-graded homology
was suggested in various forms in \cite{OblomkovRasmussenShende12}, \cite{GorskyOblomkovRasmussenShende14}, \cite{GorskyNegut15},
\cite{AganagicShakirov12}, \cite{GorskyNegutRasmussen16}.
%In all versions of the conjectures the full twist is linked to line
%bundle \(\det(\calB)\), thus we provide another evidence toward these conjectures.

Let us also mention \cite{GorskyHogancamp17} where
a link between the sheaves on the isospectral Hilbert schemes and Soergel bimodule triply-graded homology is established.
By pushing forward the sheaf to the usual Hilbert scheme the authors obtain the result analogous to our
theorem \ref{thm:main}. It worth noting that the construction of \cite{GorskyHogancamp17} results in the localization
formulas like the ones in \cite{GorskyNegutRasmussen16} only if the parity condition for the homology holds. 
Thus it is natural to expect a connection between the results of this paper and the methods of \cite{GorskyHogancamp17}.

%
%************************
%\begin{theorem}\label{thm:loc1}
% If $\vec{b}\in \ZZ^{n-1}$ is sufficiently positive\footnote{We make sufficient positivity condition more precise in the section~\ref{sec:computations}.}
% then the $(q,t,a)$-character of the homology of the closure of the braid $\prod_{i=2}\delta^{b_i}$
% %\(\mathrm{HHH}(\prod_{i=2}\delta^{b_i})\)
% is given by the formula
% \[
%  \dim_{a,Q,T}\HHH(\prod_{i=2}^{n}\delta^{a_i})=
%\sum_T \prod_i\frac{z_i^{b_i}(1+az_i^{-1})}{1-z^{-1}}\prod_{1\le i<j\le n}\zeta(\frac{z_i}{z_j}),\]
%  %\[\delta_i=\sigma_i\sigma_{i+1}\dots\sigma_{n-1}^2\dots\sigma_{i+1}\sigma_i.\]
%  where \(\zeta(x)=\frac{(1-x)(1-QTx)}{(1-Qx)(1-Tx)}\), \(Q=q^2,T=t^2/q^2\).  The last sum is over all standard Young tableaux with
%  \(z_i=Q^{a'(i)}T^{l'(i)}\), \(a',l'\) are co-arm and co-leg of the square the standard tableau with the square with the label \(i\).
%\end{theorem}

Altogether we construct the Chern and co-Chern functors for three braid related categorifies of matrix factorizations: the unframed, the unstable framed and the stable framed category~\eqref{eq:stfrmf}.
We showed in \cite{OblomkovRozansky17} that the first two categories
provide  representations of the affine braid group, while the third one provides a representation of the ordinary braid group, see section~\ref{sec:braid-realization}.

Let us also mention some technical results in this paper. In section~\ref{sec:comp-push-forw} we introduce weakly equivariant matrix factorizations from our previous work as well as dg weakly equivariant matrix factorizations. We define tamely equivariant matrix factorizations in this section and show that the push-forward and pull-back functors
preserve tameness. We also construct strictification functor that turns tame dg weakly equivariant matrix factorizations to strictly  equivariant matrix factorizations if the action is free. We use the strictification functor to show  in section~\ref{sec:braid-realization} that  the matrix factorizations which  realize the braids are isomorphic to
strongly equivariant matrix factorizations.

In section~\ref{sec:chern-character} we recall the basic theory of
equivariant matrix factorizations. In particular, in section~\ref{sec:MFs} we discuss various versions of the equivariant matrix factorizations and the relations between the
corresponding categories and in \ref{sec:glob-matr-fact} we discuss several definition of global matrix factorizations that were previously studied. 

In section~\ref{sec:comp-push-forw} we remind the main properties of the weakly equivariant matrix factorizations that we used in our previous work.
In particular, in section~\ref{sec:push-f-constr} we also remind a construction of the equivariant push-forward in the setting when weak equivariance defined by means of dg Chevalley-Eilenberg algebra. In  section~\ref{sec:strictification} we study tamely equivariant matrix factorizations introduced in section~\ref{sec:push-f-constr}
and construct  the above mentioned strictification functor.
We also prove a base change statement for matrix matrix factorizations in section~\ref{prop:base-change} and discuss a Kn\"orrer periodicity in \ref{prop:base-change}.

In section~\ref{sec:eq-strs} we remind the construction of the categories of matrix factorizations from our previous work and
in section~\ref{sec:braid-realization} we revisit our previous construction for realization of the braid group inside the category of matrix factorizations.
In section~\ref{sec:construction} define our `unframed' categorify together with its Chern and co-Chern functors \(\HC\)
and \(\CH\), proving their properties.

In section \ref{sec:coh-shv} we introduce the
framed and stable versions of our category and its Chern and co-Chern functors. In this section
we also discuss the linear Koszul duality relating the category of matrix factorizations to the category of coherent sheaves and
we show that we can combine the linear Koszul duality functor with the (co-) Chern functor to obtain the functor~\eqref{eq:mnchcchf}.
In the section~\ref{sec:knot} we prove theorem~\ref{thm:main}.
Finally, in section~\ref{sec:computations} we use Chern functor theory to prove theorem~\ref{thm:loc}.

% Finally, in section~\ref{sec:disc-furth-direct} we consider a version
% of the Chern functor that lands to the category of coherent sheaves on the space which is `between the flag Hilbert scheme and the ordinary Hilbert scheme'.
% We expect that this functor could be related to the results of Bezrukavnikov~\cite{Bezrukavnikov12}. We also discuss a conjectural
% description of the Drinfeld centers of our main monoidal categories.

{\bf Acknowledgments}
We would like to thank Dmitry Arinkin, Ivan Losev, Roman Bez\-ru\-kav\-ni\-kov, Andrei Negu{\c t} and Matt Hogencamp for useful discussions.
We are especially thankful to Tina Kanstrup for reading the manuscript and suggesting many corrections and improvements.
Multiple discussions with Eugene Gorsky guided us through from the start of the project and we  are extremely grateful to him.
Also we are very grateful to Leonid Positselski for his patient explanations about the categories of matrix factorizations,
the strictification construction from section~\ref{sec:strictification} was one of many outcomes of these discussions.
The authors are extremely grateful to an anonymous referee who studied the manuscript very thoroughly and pointed out
many inaccuracies in the earlier versions of the manuscript. 
We  also would like to thank the same referee for many suggestions concerning exposition as well for some improved
versions of our results (see for example remark~\ref{rem:EG}).
%In particular, section 3 of the paper was a result of
%discussions and suggestions from the referee.
The work of A.O. was supported in part by  the NSF CAREER grant DMS-1352398, NSF FRG grant DMS-1760373 and Simons Fellowship. 
The work of L.R. was supported in part by  the NSF grant DMS-1108727.

% In our previous work \cite{OblomkovRozansky16},\cite{OblomkovRozansky17},\cite{OblomkovRozansky17a} we develop theory that
% allows to attach to a braid \(\beta\) a two-periodic complex of coherent sheaves \(\mathbb{S}_\beta\) on the free flag Hilbert scheme
% \(\FHilb^{\rfree}\) such it has homological support on the ordinary flag Hilbert scheme \(\FHilb\subset \FHilb^{\rfree}\). The property of
% the complex is that the hypercohomology of \(\mathbb{S}_\beta\) twisted with exterior powers of the tautological vector bundle
% \(\calB\) is
% invariant of the closure \(\beta\).

% By passing to the homology of the complex \(\mathbb{S}_\beta\) we reduce computations of hypercohomology
% to the computations on ordinary flag Hilbert scheme. Since there is natural projection from the
% ordinary flag Hilbert scheme \(\FHilb\) to the Hilbert scheme \(\Hilb\), we can use the push-forward
% to obtain sheaves on the Hilbert scheme. Unfortunately, the flag Hilbert scheme is very singular and
% the above construction is very challenging. In this paper we propose more direct way to obtain a two periodic
% complex \(\mathbb{S}^{\dr}_\beta\in D^{\rper}_{\CC^*\times\CC^* }(\Hilb)\) such that the global sections of
% \(\mathbb{S}^{\dr}_\beta\otimes \Lambda^\bullet \calB\) is the link invariant of the closure of \(\beta\).

\section{Matrix factorizations}
\label{sec:chern-character}

\subsection{General facts about matrix factorizations}
\label{sec:categories}

In this section we recall conventions of matrix factorization theory and
 we remind the main properties of the weakly equivariant matrix factorizations that we used in our previous work.

\subsubsection{}
For an affine  algebraic variety \(\calZ\) and a polynomial \(F\in \CC[\calZ]\) Orlov \cite{Orlov04} defines a DG category \(\MMF(\calZ,F)\) whose objects
are `curved' homologically $\ZZ_2$-graded free finite rank  differential modules
\begin{equation}
\label{eq:mfdf}
(M,D) = \bigl(\begin{tikzcd}M_0 \arrow[r,shift left=0.5ex,"D_{01}"] & M_1  \arrow[l,shift left=0.5ex,"D_{10}"]\end{tikzcd}\bigr)
\end{equation}
That is $M = M_0 \oplus M_1$ is equipped with the \(\CC[\calZ]\)-linear homomorphism \(D_{01},D_{10}\) and "curved" means that the square of the total differential $D = D_{01} + D_{10}$ is equal to $F$: $D^2 = F\xId_M$. Let \((M,D),(M',D')\in \MMF(\calZ,F)\) then \[\phi\in \Hom^j((M,D),(M',D'))=\oplus_{i=0,1}\Hom_{\CC[\calZ]}(M_{i},M'_{i+j}).\]
The dg structure of \(\Hom^*((M,D),(M',D'))\) is defined by 
\begin{equation}\label{eq:Hom-dif}
  d\phi=D'\phi-\phi D,\end{equation}

\begin{defn}\label{def:h0dg}
  The triangulated category \(\MF(\calZ,F)\) is defined as \(H^0(\MMF(\calZ,F))\).
  That \(\MF(\calZ,F)\) has the same objects as \(\MMF(\calZ,F)\) but the morphism space
  between \(\calF,\calF'\) is defined as \(H^0(\Hom^*(\calF,\calF'))\)
\end{defn}

% Respectively, an element
% \(\psi\in \Hom_{\CC[\calZ]}^{j+1}(M)\) defines zero-homotopic homomorphism: \(d\psi\sim 0\). Thus we define space of morphism 
% $\Hom^j((M,D),(M',D'))$   in \(\MF(\calZ,F)\) as space of homotopy equivalence classes of elements of \(\Hom_{\CC[\calZ]}^j((M,D),(M',D'))\).

In defining the category \(\MF(\calZ,F)\) we did not require smoothness of \(\calZ\).
In our setting only smooth varieties appear, so we assume smoothness of \(\calZ\) from now on. The smoothness assumption drastically simples the theory of matrix factorization, the general case is discussed in \cite{EfimovPositselski15},\cite{Orlov12}.

% However, constructions of the derived push-forward, which we discuss later, for the matrix factorizations
% is not possible without some smoothness assumptions. So everywhere below we assume that the underlying affine varieties of matrix factorizations are smooth unless we say otherwise. We provide some preliminary explanations on the limits functoriality of our constructions when we deal with the matrix factorizations on singular space.
% 
%two-periodic curved complexes of free finite rank modules. `Curved' means that the square of the differential in a
%complex should be equal to \(F\):
%\[(M_0\oplus M_1,D),\quad D: M_i\to M_{i+1},\quad D^2=F.\]
%Morphisms between objects are homomorphisms of chain complex modules which commute with the differentials.
%For the precise description of homotopies between the maps we refer  to the original paper \cite{Orlov04}.

\subsubsection{}\label{sec:KosMFs}
Tensor product operation allows us to construct new matrix factorizations.
\begin{defn}
  Given two potentials \(F,F'\in \CC[\calZ]\) we define the  tensor product  bi-functor:
\[\otimes: \MF(\calZ,F)\times\MF(\calZ,F')\to \MF(\calZ,F+F')\]
as \((M,D)\otimes(M',D')=(M\otimes M',D\otimes 1+1\otimes D')\). Here we use the usual sign conventions for the tensor product of (curved) complexes
to ensure that the differential squares to \(F+F'\). 
\end{defn}

Similarly, one defines the exterior tensor product
\[\boxtimes: \MF(\calZ_1,F_1)\times \MF(\calZ_2,F_2)\to \MF(\calZ_1\times \calZ_2,F+F')\]

If $F$ has a presentation $F = f_1 g_1 +\cdots + f_m g_m$, $f_i,g_i\in\CC[\calZ]$, then one defines a Koszul matrix factorization
\[
\begin{bmatrix}f_1&g_1\\f_2&g_2\\ \vdots&\vdots\\f_m&g_m\end{bmatrix}
= \bigotimes_{i=1}^m \begin{tikzcd}(\CC[\calZ] \arrow[r,shift left=0.5ex,"f_i"] & \CC[
  \calZ] \arrow[l,shift left=0.5ex,"g_i"])\end{tikzcd}
\]
Equivalently, the Koszul matrix factorization can be presented with the help of odd variables $\theta_1,\ldots,\theta_m$: its module is $M = \CC[\calZ]\otimes \CC[\theta_1,\ldots,\theta_m]$, while its differential is $D = \sum_{i=1}^m f_i \theta_i + g_i \frac{\partial}{\partial \theta_i}$. Respectively, \(\ZZ_2\)-graded pieces of \(M\) are
\[M_i=\oplus_{|S|=i \mbox{ mod }  2} \CC[\calZ]\otimes \theta_S,\quad \theta_S=\prod_{s\in S}\theta_s.\]

If $f_1,\ldots,f_m$ form a regular sequence, then the Koszul matrix factorization is independent of the choice of $g_1,\ldots, g_m$ \cite[Lemma 2.2]{OblomkovRozansky16} up to an isomorphism, so we use an abbreviated notation \(\mathrm{K}^F(f_1,\dots,f_m)\in \MF(\calZ,F)\). For the independence we need to require \(\CC[\calZ]\) to have
finite homological dimension, it is true if  \(\calZ\) is smooth.

Let \(A,B\) be a  pair of rectangular matrices, \(A\) of size \(n\times m\) and \(B\) of size \(m\times n\),  with entries from \(\CC[\calZ]\).
In the main body of the paper we also use notation the following Koszul matrix  factorization 
\[[A,B]=[\vec{A},\vec{B^t}]\in \MF(\calZ,\Tr(AB)).\]
Here \(\vec{A}\), \(\vec{B^t}\) vectors of rank \(nm\) that consist of entries \(n\times m\) matrices \(A\), \(B\),
\((\vec{A})_{im+j}=A_{i,j}\), \((\vec{B^t})_{im+j}=B_{j,i}\).

\subsubsection{}\label{sec:quasi}
  In our work we also use categories of matrix factorizations \(\MF(\calZ',F)\) where \(\calZ'\subset \calZ\) is a quasi-affine subvariety of the affine
  variety  \(\calZ\). That is \(\calZ\setminus\calZ'\) is an affine closed subvariety. In our setting \(\calZ'\) is usually a some sort of stable locus.

  Since \(\calZ'\) is quasi-affine, we have an affine cover \(\mathcal{U}'\) of \(\calZ'\), \(\calZ'=\cup_{i\in I}\calZ'_i\), here \(\calZ'_J=\calZ'_{j_1}\cap \dots \cap \calZ'_{j_m}\)  are affine
  for any \(J=\{j_1,\dots,j_m\}\subset I\). For dg sheaf \(\calF_*\) on \(\calZ'\)
  we define \[C^q(\mathcal{U}',\calF_p)=\oplus_{J,|J|=q}\calF_p(\calZ'_J).\]
  Following \cite[Section 2.6.3]{LuntsShnurer16} and \cite[Section 2]{LinPomerleano13} we define dg category \(\MMF_{\check{C}mor}(\calZ',F)\) as follows.
  \begin{defn}\label{def:Cmor}
    The dg category \(\MMF_{\check{C}mor}(\calZ',F)\) has the same objects as \(\MMF(\calZ',F)\) but the space of morphisms between \((M,D)\) and
    \((M,D')\) is defined by
    \[\Hom^k((M,D),(M',D'))=\oplus_{k=p+q+p' } C^q(\mathcal{H}om(\widetilde{M_p},\widetilde{M'_{p'}})),\]
    where \(\widetilde{M_*},\widetilde{M'_*}\) are the sheaves associated with the modules \(M\), \(M'\).
    The differential on \(\Hom^*((M,D),(M',D'))\) is the differential of the total bicomplex: \(d_{tot}=d+d_{\check{C}}\) where
    \(d_{\check{C}}\) is the \v Chech differential and \(d\) is defined by \eqref{eq:Hom-dif}.
  \end{defn}

  Analogously to  definition~\ref{def:h0dg} we define the homotopy category
  \[\MF_{\check{C}mor}(\calZ',\calF)=H^0(\MMF_{\check{C}mor}(\calZ',\calF)).\]
  It is shown in \cite[Corollary 2.51]{LuntsShnurer16} that different affine covers
  result into the equivalent categories. In particular, if \(\calZ'\) is affine then
  the category \(\MF_{\check{C}mor}(\calZ',\calF)\) is equivalent to category
  \(\MF(\calZ,\calF)\). Thus from now on we drop subindex \(\check{C}mor\) since can 
  assume that all our categories are of the above type.

  \begin{remark}\label{rem:Cech}
    If apply the construction of \(\MF_{\check{C}mor}(\calZ',F)\) to \(\calZ\) quasi-projective then we obtain a derived category of matrix factorizations
    (see \cite[Proposition 2.50]{LuntsShnurer16} and discussion in the appendix).
  \end{remark}

\subsection{Equivariant matrix factorizations}
\label{sec:MFs}

In this section we remind the main definitions from the theory of equivariant matrix factorizations as in \cite{OblomkovRozansky16}. We also aim to clarify subtle points of the
construction and explain relations with the
previously developed theory of equivariant matrix factorizations \cite{BallardFaveroKatzarkov13}. We start with an outline of Chevalley-Eilenberg construction and define the category of equivariant matrix factorizations. We compare the equivariant matrix factorizations
with the strictly equivariant matrix factorizations.
%Finally, we explain that in the case when the acting group is reductive there is retraction from the category of equivariant
%matrix factorization to the category of strictly equivariant matrix factorizations.

%\subsection{Chevalley-Eilenberg complex} Let us remind the general setting of Che\-val\-ley-Eilenberg homology.

\subsubsection{}\label{sec:CEdef}

 % For $\frh$ module $V$ we define \[\CE_\frh(V):=\Hom_{U(\frh)}(\CE_\frh,V).\] The homology of this complex is called Chevalley-Eilenberg cohomology of $V$: $\mathrm{H}^*_{\Lie}(\frh,V)$. Dual version of the functor is homology functor: \(\mathrm{H}^{\Lie}_*(\frh,V):=\CE_\frh\otimes_{\frh} V\). These functors are derived functors of co-invariants
 % \(\mathrm{H}^0_{\Lie}(\frh,V)=V/\frh\cdot V\) and invariants \(\mathrm{H}^{\Lie}_0(\frh,V)=V^{\frh}\).

Consider a group $H$ such that $\mathrm{Lie}(H)=\frh$, $\calZ$ is an affine variety with $H$-action and $\chi^2$-invariant function $F\in \CC[\calZ]^H_{\chi^2}$ where
\(\chi\in \hat{H}\) is a character.

   \begin{defn}
       We define category of strictly
 \(H\)-equivariant matrix factorization \[\MF(\calZ,H,\chi,F)=\{ (M,D)| D(h\cdot m)=h\cdot D(m), D^2=F, h\in H, m\in M\}\] as category consisting of pairs \((M,D)\) where \(M\) is a free  \(\ZZ_2\)-graded  \(\CC[\calZ]\)-module  with
 \(H\)-action and \(D\) is the \(H\)-equivariant curved odd (of \(\ZZ_2\) degree one) differential of degree \(\chi\). The morphisms and homotopies  in this category are assumed to be \(H\)-equivariant.
\end{defn}

% In most of the cases there is a natural choice of a square root \(\chi\) of \(\chi^2\), \(F\in \CC[\calZ]^H_{\chi^2}\).
% Thus to simplify notations we often omit the character \(\chi\) from the notation. 

Thus defined homotopy category is equivalent to the category of equivariant matrix factorizations was studied in \cite[Section 3]{BallardFaveroKatzarkov13}.
In our previous work we used notation \(\MF^{str}_H(\calZ,F)\) for such categories,
here we adopt notation that is used in \cite{BallardFaveroKatzarkov13}.

In our constructions we use spaces with action of non-reductive groups and we need to define a functorial push-forward along a regular embedding without appealing to the derived category setting. It appears 
 that such (non-derived) push-forward does not exists in the setting of the strictly equivariant matrix factorizations \cite{OblomkovRozansky16}. Thus we need to relax the equivariance
 constrain and define equivariant matrix factorizations with help of Chevalley-Eilenberg  complex (see section~\ref{sec:eqMFdef}).
 
\begin{defn} Chevalley-Eilenberg complex
 $\CE_\frh$ is the complex $(V_\bullet(\frh),d)$ with $V_p(\frh)=U(\frh)\otimes_\CC\Lambda^p \frh$ and differential $d_{ce}$:
 \def\dtheta{d}
 \begin{multline*} d_{ce}(u\otimes x_1\wedge\dots \wedge x_p)=\sum_{i=1}^p (-1)^{i+1} ux_i\otimes x_1\wedge\dots \wedge \hat{x}_i\wedge\dots\wedge x_p+\\
   \sum_{i<j} (-1)^{i+j} u\otimes [x_i,x_j]\wedge x_1\wedge\dots \wedge \hat{x}_i\wedge\dots\wedge \hat{x}_j\wedge\dots \wedge x_p,
   \end{multline*}
\end{defn}
   
\subsubsection{}\label{sec:eqMFdef} In this section we define triangualated category of weakly equivariant matrix factorizations. First we discuss the case when \(\calZ\) is affine.

For a given $\frh$-module $M$ the complex $\CE_\frh\odel M$ has terms $U(\frh)\otimes \Lambda^i(\frh)\otimes M$ with $\frh$-module structure
 $$ x\cdot(u\otimes \omega\otimes m)=x\cdot u\otimes \omega\otimes m,$$
 and the differential of the complex is   $d_{ce}=d_{ce}^1+d_{ce}^2$ where:
 \[
  d_{ce}^1=d_{ce}\otimes 1,\]
$$ d_{ce}^2(u\otimes x_1\wedge\dots \wedge x_p)=  \sum_{i=1}^p u\otimes x_1\wedge\dots \wedge \hat{x}_i\wedge\dots\wedge x_p\otimes x_i\cdot m
$$

A slight modification of the standard fact that $\CE_\frh$ is the resolution of the trivial module \cite[Theorem 7.2.2]{Weibel94} implies that $\CE_\frh\odel M$ is a free over \(U(\frh)\) resolution of the
$\frh$-module $M$.

\begin{defn}\label{def:weak-eq}
Let \(\calZ\) be a smooth variety with action of \(G\), \(H=R_u(G)\) is the unipotent radical of \(G\)  and \(G/H=\bar{G}\) is an algebraic torus.
  We define a dg category $\mathbf{MF}(\calZ,G^{\sim},F)$, \(F\in \CC[\calZ]^G_{\chi^2}\) whose objects are triples $(M,D,\partial)$ where \[M=M_0\oplus M_1,\quad M_i=\CC[\calZ]\otimes V_i,\quad V_i \in \Mod_{G},\quad
\partial\in \oplus_{i>j} \Hom_{\CC[\calZ]}(\Lambda^i\frh\otimes M, \Lambda^j\frh\otimes M)\] and $D$ is an odd endomorphism
$D\in \Hom_{\CC[\calZ]}(M,M)$ such that \(D\) is an odd (of \(\ZZ_2\)-degree one) \(G_{red}\)-equivariant of degree \(\chi\) and
$$D^2=F,\quad  D_{tot}^2=F,\quad D_{tot}=D+d_{ce}+\partial,$$
where the total differential $D_{tot}$ is an endomorphism of $\CE_\frh\odel M$, that commutes with the $U(\frh)$-action and of degree \(\chi\). The morphism space between \(\calF=(M,D,\partial)\)
and \(\calF'=(M',D',\partial')\) is defined as 
\[\Hom^*(\calF,\calF')=\oplus_{i\ge j} \Hom_{\CC[\calZ]}(\Lambda^i\frh\otimes M, \Lambda^j\frh\otimes M')\]
  with differential
  \[d(\Psi)= D'_{tot}\circ \Psi-\Psi\circ D_{tot}.\]
\end{defn}

In the last definition we define action of \(\Psi\) and \(\partial\) on \(\CE_\frh\odel M\)
by requiring
that this action respects \(U(\mathfrak{h})\)-action.

As non-equivariant case define the triangulated category \(\MF_H(\calZ,G^\sim,F)\) as homotopy category
\[\MF(\calZ,G^\sim,F)=H^0(\mathbf{MF}(\calZ,G^{\sim},F)).\]

% For two $\frh$-equivariant matrix factorizations $\calF=(M,D,\partial)$, $\tilde{\calF}=(\tilde{M},\tilde{D},\tilde{\partial})$ the space of morphisms $\Hom(\calF,\tilde{\calF})$ consists of
% homotopy equivalence classes of even elements \[\Psi\in \oplus_{i\ge j} \Hom_{\CC[\calZ]}(\Lambda^i\frh\otimes M, \Lambda^j\frh\otimes M)\]
%    such that $\Psi\circ D_{tot}=\tilde{D}_{tot}\circ \Psi$ and $\Psi$ commutes with
%    $U(\frh)$-action on $\CE_\frh\odel M$.

% Respectively,    the map $\Psi,\Psi'\in \Hom(\calF,\tilde{\calF})$ are homotopy equivalent if
%    there is  an even map \[h\in \oplus_{i\ge j} \Hom_{\CC[\calZ]}(\Lambda^i\frh\otimes M, \Lambda^j\frh\otimes M)\]
%     such that $\Psi-\Psi'=\tilde{D}_{tot}\circ h+ h\circ D_{tot}$ and $h$ commutes with $U(h)$-action on  $\CE_\frh\odel M$.

% If \(G=G_1\times G_2\),  \(H_i=[G_i,G_i]\), \(H=H_1\times H_2\) is a direct sum of Lie algebras then \(\CE_{\frh}=\CE_{\frh_1}\otimes\CE_{\frh_2}\) and we assume that the matrix factorizations
% %    from \(\MF_{H_1\times H_2}(\calZ,F)\) is of the form
%   \[(M,D,\partial_1\otimes 1+ 1\otimes \partial_2),\quad\partial_s\in \oplus_{i<j}\Hom_{\CC[\calZ]}(M\otimes \Lambda^i\frh_s,M\otimes \Lambda^j\frh_s).\]

   If \(\mathcal{U}'\)   is \(G\)-equivariant affine cover of \(\calZ'\),
   \(\calZ'=\cup_{i\in I}\calZ'_i\) then we define the dg category
   \(\MMF_{\check{C}mor}(\calZ',G^{\sim},F)\) by combining the constructions from
   definitions \ref{def:Cmor} and \ref{def:weak-eq}. The argument of  \cite[Corollary 2.51]{LuntsShnurer16} implies that different affine covers result into equivalent triangulated categories  \(\MF_{\check{C}mor}(\calZ',G^\sim,F)=H^0(\MMF_{\check{C}mor}(\calZ',G^\sim,F))\).
Thus we suppress the subindex \(\check{C}mor\) from our notations unless we want to emphasis the Cech aspect of the construction.

Let \(G=G_1\times G_2\) acts on the quasi-affine \(\calZ\) and \(F\in\CC[\calZ]_{\chi^2} \), \(\chi\in \hat{G}\). In this situation one can define
a triangulated category of partially strongly equivariant matrix  factorizations
\[\MF(\calZ,G_1\times G_2^\sim,F).\]
The objects of this category are two periodic complexes of locally-free sheaves that are strongly equivariant with respect to \(G_1\)  and
weakly equivariant with respect to \(G_2\).

Finally, let us relax the finite rank constraint in our definition and introduce the triangulated categories 
\[\MF^{\infty}_{\check{C}mor}(\calZ,G_1\times G_2^\sim,F),\quad \MF^{\infty}_{\check{C}mor}
(\calZ,G_1\times G_2^\sim,F)\]
that consist of the matrix factorizations of possibly infinite rank. 

% The inclusion functor \(\MF_{\check{C}mor}(\calZ,G_1\times G_2^\sim,F)\to \MF^\infty_{\check{C}mor}(\calZ,G_1\times G_2^\sim,F)\) is fully-faithful by argument in  \cite[Corollary 4.5]{PolishchukVaintrob11}.

\subsubsection{Koszul dual description}
\label{sec:Koszul-dual}

The space \(\Lambda^*\frh^\vee\) has a natural structure of dg algebra with the usual exterior algebra multiplication and
differential defined in terms of the structure constants of the Lie algebra. If \(\xi_i\), \(\xi_i^\vee\) are dual bases
of \(\frh\) and \(\frh^\vee\) then:
\[d(\xi^\vee_i)=\sum_{k,l} c^i_{kl}\xi_k^\vee\wedge \xi_l^\vee, \quad [\xi_k,\xi_l]=\sum_i c^i_{kl}\xi_i.\]

Moreover the Chevalley-Eilenberg complex \(\CE_\frh\) is a  free dg module over \(\Lambda^*\frh^\vee\), the generators \(\xi_i^\vee\) act by the contraction.
Thus given an affine space \(\calZ\) with \(H\)-action the space \(\CC[\calZ]\otimes\Lambda^*\frh\) acquires the structure \(\Lambda^*\frh^\vee\) dg module
since \(\CC[\calZ]\otimes \Lambda^*\frh=\CC[\calZ]\otimes_{U(\frh)}\CE_\frh\). The left action of \(\CC[\calZ]\) commutes with the action of \(\Lambda^*\frh^\vee\) thus
\(\CC[\calZ]\otimes \Lambda^*\frh\) is a free \(\CC[\calZ]\otimes \Lambda^*\frh^\vee\)-module of rank \(1\) and a free \(\CC[\calZ]\otimes \Lambda^*\frh^\vee\) module
is a direct sum of copies of \(\CC[\calZ]\otimes \Lambda^*\frh\).

\begin{defn}Let \(\calZ\) be a smooth variety with action of \(G\) and \(H=R_u(G)\),  \(G/H=\bar{G}\) are the radical and the reductive quotient of \(G\).
The dg category  \(\MMF(\calZ,G^{\dg},F)\), \(F\in \CC[\calZ]^G\)  has objects that are pairs  \(M,D\) where \(M\) is \(\ZZ_2\)-graded free dg \(\CC[\calZ]\otimes\Lambda^*\frh^\vee\)-module and \(D\) is a \(\ZZ_2\)-graded  \(\bar{G}\)-equivariant  dg endomorphism \(D\in \Hom_{\CC[\calZ]\otimes \Lambda^*\frh^\vee}(M,M)\) such that
\(D^2=F\). The space of morphisms between \(\calF=(M,D)\) and \(\calF'=(M',D')\) is defined as
\[\Hom(\calF,\calF)=\Hom_{\CC[\calZ]\otimes \Lambda^*\frh^\vee}(M,M)^{\bar{G}}\]
with differential defined by \ref{def:h0dg}.
\end{defn}

Respectively, we define a triangulated category \(\MF(\calZ,G^{\dg},F)=H^0(\MMF(\calZ,G^{\dg},F))\).
%Below, we restrict our discussion to the case \(G/[G,G]=T=1\), to keep notations simple. It is easy to adjust statements and results to the case of non-trivial \(T\).
Let us point out that a differential  \(D\) of an element \((M,D)\in \MMF(\calZ,G^{dg},F)\), \(M=\bar{M}\otimes \Lambda^*\frh\) is of the form
\begin{equation}\label{eq:d_i}
  D=\sum_{I} D_I\xi_I^\vee, \quad \xi^\vee_I=\xi_{i_1}^\vee\wedge\dots\wedge\xi_{i_k}^\vee, \quad D_I\in \Hom_{\CC[\calZ]}(\bar{M},\bar{M})^{\bar{G}}.\end{equation}
In particular, \(D_\emptyset^2=F\) and \((\bar{M},D_\emptyset)\in \MMF(\calZ,F)\). Moreover, \(D_I\), \(|I|>0\) become the correction differentials under functor
\[{\mathrm{KSZ}}_{U(\frg)}=\CE_\frh\otimes_{\Lambda^*\frh^\vee}:\MMF(\calZ,G^{\dg},F)\to \MMF(\calZ,G^\sim,F).\]

Let us define the functor in the opposite direction by
\[{\mathrm{KSZ}}_{U(\frg)}^*(\calF)=\calF\otimes_{U(\frg)}\CE_\frg\]
\begin{proposition}\label{prop:KSZ-g}
  The functor \(\mathrm{KSZ}_{U(\frg)}\) is fully faithful and 
   the image of the functor is a subcategory that has as objects matrix factorizations
  \((M,D,\partial)\in \MF(\calZ,G^\sim,F)\)  such that \(D^{tot}=D+\partial+d_{ce}\) intertwines
  the \(\Lambda^*(\frh^\vee)\)-action on \(M\otimes\Lambda(\frh)\otimes U(\frh)\).
 \end{proposition}
\begin{proof}

  Given an element \((M,D)\in \MF(\calZ,F,G^{\dg})\) locally \(M\) is a direct sum
  of \(r\) copies of \(\CC[\calZ]\otimes \Lambda^*(\frg^\vee)\). Thus locally the tensor product in the construction of the functor becomes:
  \[\CC[\calZ]\otimes \Lambda^*(\frg)\otimes U(\frg)\oplus\dots\oplus\CC[\calZ]\otimes \Lambda^*(\frg)\otimes U(\frg).\]
  Here we used simplification \(\Lambda^*(\frg^\vee)\otimes_{\Lambda^*(\frg^\vee)} \Lambda^*(\frg)=\Lambda^*(\frg)\).

  Before this simplification the differentials \(D\otimes 1\) and \(1\otimes d_{ce}\) mutually compute. Hence \(D_{tot}=D+d_{ce}\)  satisfies equation \(D_{tot}^2=F\)
  and the triple \((M,D_\emptyset,\{D_{I}\})\) is well-defined element of \(\MF(\calZ,G^\sim,F)\). Since \(D\otimes 1\) and \(1\otimes d_{ce}\) commute we
  we have induced map of the morphism spaces \(\Hom(\calF,\calF')\to \Hom(\mathrm{KSZ}_{\frg}(\calF),\mathrm{KSZ}_{\frg}(\calF'))\)

  Moreover, the differential \(D_{tot}\) is compatible with the action of \(\Lambda^*(\frg^\vee)\). Thus we proved the first statement.

  On the other hand \(\CE_\frg\otimes_{U(\frg)}\CE_\frg\) is of infinite rank over \(\Lambda^*(\frg^\vee)\otimes \Lambda^*(\frg^\vee)\). As a vector space it is
  \(\mathbb{B}=\Lambda^*(\frh)\otimes U(\frh)\otimes \Lambda^*(\frh)\) and it PBW filtration \(F\)
  such that the associate graded space is \(\gr_F(\mathbb{B})=\Lambda^*(\frh)\otimes \CC[\frh^\vee]\otimes\Lambda^*(\frg)\).
  Moreover, the associated graded dg structure on this \(\Lambda^*(\frh^\vee)\otimes\Lambda^*(\frh^\vee)\) module is given by the differential:
 \[gr(D_B)=\sum_{i}\frac{\partial}{\partial \xi_i}\otimes \xi_i+1\otimes \xi_i\otimes \frac{\partial}{\partial \xi_i},\]
 where \(\xi_i\) is a basis of \(\frg\).

 By the usual Koszul duality argument,
 \((gr(\mathbb{B}), gr(D_B))\) is homotopic to the diagonal \(\Lambda^*(\frg^\vee)\)-\( \Lambda^*(\frg^\vee)\) bimodule \(\Lambda^*(\frg)\). Since the Chevalley-Eilenberg
 differential respects the filtration the homotopy lifts from the graded module
 \((\mathbb{B},D_B)\sim \Lambda^*(\frg)\). Thus we get \(\mathrm{KSZ}^*_{\frg}\circ\mathrm{KSZ}_{\frg}=1\)  and the statement follows.
  \end{proof}

  \begin{remark}Since \(\CE_\frh\otimes_{\Lambda^*\frh^\vee}\CE_\frh\) is a resolution of \(U(\frh)\otimes U(\frh)\)
    module \(U(\frh)\) in terms of free \(U(\frh)\otimes U(\frh)\)-modules, we see that
    \(\mathrm{KSZ}_{\frg}\circ\mathrm{KSZ}_{\frg}^*=1\). Thus we obtain an equivalence of categories \(\MF^\infty(\calZ,G^\sim,F)\simeq\MF^\infty(\calZ,G^{\dg},F)\) and have an inclusion of categories:
    \[\MF(\calZ,G^{dg},F)\subset \MF(\calZ,G^\sim,F)\subset \MF^\infty(\calZ,G^\sim,F)=
    \MF^\infty(\calZ,G^{dg},F).\]
\end{remark}

\subsection{Global matrix factorizations}
\label{sec:glob-matr-fact}

There are several variants of definition of matrix factorizations in the literature. Let us quickly outline the variants and explain the relation with the
category studied in this paper.

Let \(\calZ\) be any regular finite dimensional \(H\)-variety and \(F\in \CC[\calZ]^{G}_\chi\) here \(H\) is an algebraic group and
\(\chi\in \hat{H}\) is a character. Following  \cite{BallardFaveroKatzarkov13}, \cite{LinPomerleano13}, \cite{EfimovPositselski15}, \cite{LuntsShnurer16}, \cite{Hirano17} we introduce differential graded (dg) categories
\[\mathbf{Qcoh}(\calZ,H,F),\quad \mathbf{coh}(\calZ,H,F),\quad \mathbf{LFr}(\calZ,H,F),
  \quad \mathbf{lfr}(\calZ,H,F),\quad \mathbf{Inj}(\calZ,H,F),\]
of curved two-periodic \(H\)-equivariant complexes with differentials of degree \(\chi\) \eqref{eq:mfdf} where \(M_i\) are quasi-coherent,
coherent, locally free, locally free of finite rank and injective, respectively. The differential graded structure on the space of morphisms is defined by \eqref{eq:Hom-dif}.

If \(\calZ \) is an affine space the dg category \(\mathbf{lfr}(\calZ,H,F)\) is isomorphic to the dg category \(\MMF^{str}_H(\calZ,F)=\MMF(\calZ,H,F)\) discussed in the section~\ref{sec:categories} and studied in the early papers \cite{Eisenbud80}, \cite{Orlov04}.

% Similarly, we define \(\MMF^{str,\infty}_H(\calZ,F)\) to be the
% dg category of two-periodic \(H\)-equivariant complexes locally free sheaves of possibly infinite rank, respectively
% \(\MF^{str,\infty}_H(\calZ,F)=H^0()\)Then \cite[Proposition 2.50]{LuntsShnurer16} shows that
%\[\MF\]

The categories \(Z^0(\mathbf{Qcoh}(\calZ,H,F)), Z^0(\mathbf{coh}(\calZ,H,F)),
Z^0(\mathbf{LFr}(\calZ,H,F)), Z^0(\mathbf{lfr}(\calZ,H,F))\) are exact, see for example
\cite[Proposition 3.5]{Hirano17a}. Let us denote \(\mathrm{Acycl}(\calC)\) the smallest thick category that contains all totalizations of the exact sequences of the exact category \(\calC\). The authors of  \cite{BallardFaveroKatzarkov13},\cite{LinPomerleano13}, \cite{EfimovPositselski15}, \cite{LuntsShnurer16}, \cite{Hirano17} in following the construction in \cite{Orlov12}
define the derived categories of the above dg categories  as Verdier quotients:
\[\mathrm{D}(\calC)=H^0(\calC)/\mathrm{Acycl}(\calC).\]

As it is shown \cite{BallardFaveroKatzarkov13},\cite{LinPomerleano13}, \cite{EfimovPositselski15}, \cite{LuntsShnurer16} there are equivalences between these categories:
\[\mathrm{D}(\mathbf{coh}(\calZ,H,F))\simeq \mathrm{D}(\mathbf{lfr}(\calZ,H,F))\simeq \MF_{\check{C}mor}(\calZ,H,F)\]
\[H^0(\mathbf{Inj}_G(\calZ,F))\simeq D(\mathbf{Lfr}_G(\calZ,F))\simeq D(\mathbf{Qcoh}_G(\calZ,F))\simeq\MF_{\check{C}mor}^{\infty}(\calZ,H,F).\]
In the last statement the categories \(\MF_{\check{C}mor}(\calZ,H,F)\), \(\MF_{\check{C}mor}^\infty(\calZ,H,F)\) are equivariant versions of the categories from
the remark~\ref{rem:Cech}. As before we drop subindex \(\check{C}mor\) for brevity.
Non-equivariant version of the  equivalences are shown in \cite[Theorem 2.9, Proposition 2.50]{LuntsShnurer16}, the equivariant version of the equivalences is discussed in section 3 of \cite{BallardFaveroKatzarkov13} (see Corollary 3.13 and Proposition 3.14 there), the most general construction is discussed in \cite{EfimovPositselski15}.

Let \(f:\calZ\to\calY\) be an \(H\)-equivariant map.
Following the blue-prints from the theory of quasi-coherent sheaves the above mentioned authors \cite[Definition 3.36]{BallardFaveroKatzarkov13}, \cite[Section 3.5]{EfimovPositselski15}, \cite[Section 2.5.2]{LuntsShnurer16} define derived functors:
\[Rf_*: \MF^\infty(\calZ,H,f^*(W))\to\MF^{\infty}( \calY,H,W),\]
\[Lf^*:\MF(\calY,H,W)\to\MF( \calZ,H,f^*(W)),\quad f^*: \MF^{\infty}(\calY,H,W)\to\MF^{\infty}( \calZ,H,f^*(W)). \]

In listed paper the basic properties of these functors are proven.

\begin{proposition}\cite{EfimovPositselski15},\cite{BallardFaveroKatzarkov13},\cite{LuntsShnurer16}\label{prop:push-f-list}
  \begin{enumerate}
  \item There is an adjunction \(Lf^*,Rf_*\) of triangulated functors.
  \item If restriction of  \(f\) on the critical locus of \(W\) is proper then
    \(Rf_*\) sends  \(\MF(\calZ,H,f^*(W))\) to \(\MF( \calY,H,W)\)
  \item If \(f\) is proper and affine then \(Rf_*=f_*\). If \(f\) is flat then
    \(Lf^*=f^*\)
    \item \(Rf_*(\calF)\overset{L}{\otimes}\calE=Rf_*(\calF\overset{L}{\otimes}Lf^*(\calE))\)
  \end{enumerate}
  
\end{proposition}

 % If map \(f\) is proper on the critical locus of \(f^*(W)\) then the
 % the functor \(f_*\) sends \(\MF_H^{str}(\calZ,f^*(W))\) to \(\MF_H^{str}(\calY,W)\).

 In section~\ref{sec:comp-push-forw} we discuss the push-forward and  pull-back functors in weakly equivariant setting. For two \(H\)-varieties \(\calZ,\calY\) with \(H\)-invariant  potentials \(F,W\) and  an equivariant map \(f\colon \calZ\to \calY\) such
that \(f^*(W)=F\), there is a  pull-back functor \(f^*\colon \MF(\calY,H,W)\to \MF(\calZ,H,F),\) since a pull-back of a  free module is free.
Moreover, if \(f\) is a smooth projection or a regular embedding then there is a well-defined
push-forward functor \[f_*\colon \MF(\calZ,H,F)\to \MF^\infty(\calY,H,W)\] which is a right adjoint to $f^*$, see \cite{OblomkovRozansky16} and corollary~\ref{cor:adj} below.

Most importantly, in section~\ref{sec:comp-push-forw} we define {\it tame} weakly equivariant matrix factorizations. Under some assumption of freeness of the group action,
proposition~\ref{prop:strict} shows equivalence  between the category tame weakly equivariant matrix factorizations and strictly equivariant matrix factorizations discussed in this sections. In section~\ref{sec:braid-realization} we remind the
braid realization from \cite{OblomkovRozansky16} and explain why {\it all} equivariant
matrix factorizations in \cite{OblomkovRozansky16} are tame.

In particular, let us also remark that the push-forward of a finite rank module along a smooth map \(f:\calZ\to \calY\) with  positive dimensional fibers has infinite rank. However, in all cases
considered in our paper we apply non-proper push-forward  in the situation when we  have properness over the critical locus and we can apply part three of proposition~\ref{prop:push-f-list} and proposition~\ref{prop:strict}.

% Other words, there
% is regular sequence \(t_1,\dots,t_m\in \calZ\) such that the zero set is finite over \(\calY\) and \(t_i\) act by zero homotopies on \((M,D)\).
% It is shown in \cite[Proposition 13]{KhovanovRozansky08a} (also see \cite[Theorem 12.4]{DyckerhoffMurefet13}) that \(f_*((M,D))\) is homotopic to a finite rank
% matrix factorization in this settings.

\section{Weakly Equivariant matrix factorizations}
\label{sec:comp-push-forw}

\subsection{Push-forward: a construction}\label{sec:push-f-constr}
The main technical advantage of the weakly equivariant matrix factorizations over strictly equivariant matrix factorization is existence of an elementary construction for  the regular push-forward \ref{prop:push-f-star}:
\begin{equation}\label{eq:j}
  j_*: \MF(\calZ_0,H^\sim,F|_{\calZ'})\to \MF(\calZ,H^\sim,F),\end{equation}
    where \(\calZ_0\subset \calZ\) is defined by the \(\frh\)-invariant ideal whose generators form a regular sequence \cite{OblomkovRozansky16}.
    The push-forward functor
    satisfies the projection formula and the smooth base change, see proposition~\ref{prop:base-change} and proposition~\ref{prop:projection} below.

    The construction for \(j_*\) from \cite{OblomkovRozansky16} relies on an iterative algorithm. Here we explain the algorithm for the push-forward in dg setting. Our previous work relies on the formal properties of \(j_*\) which hold in the dg setting as explain below. In the proposition below we articulate preservation of tameness under the push-forward.

    \begin{defn}
  The subcategory \(\MF^{tame}(\calZ,G^{dg},F)\subset \MF(\calZ,G^{dg},F)\) has objects \((M,D,\partial)\) with the following property. For any \(I\)
  operator \(D_I\) as in \eqref{eq:d_i} respects some finite filtration of locally-free \(\CC[\calZ]\)-modules \(\mathfrak{F}_i(M)\subset M\),
  \(\mathfrak{F}_0(M)=M\), \(\mathfrak{F}_N(M)=0\) \(\mathfrak{F}_i(M)\subset\mathfrak{F}_{i-1}(M)\), \(D_I(\mathfrak{F}_i(M))\subset \mathfrak{F}_{i+1}(M)\) and
  \(\mathfrak{F}_{i}(M)/\mathfrak{F}_{i+1}(M)\) is locally-free.
\end{defn}

% Equivalently, we could have required that \(D_I\) are strictly upper-triangular in some basis of \(M\).
% The construction of the derived push-forward \(j_*\) preserves the tame category. It could be seen from the original construction in \cite{OblomkovRozansky16}. For continence of reader we include the details in the next section.

    To distinguish the push-forward functor from \cite{OblomkovRozansky16} from the push-forward from the previous section we use notation \(j_{\star}\). Later, see \ref{prop:star-star} we show that the push-forward coincides with the usual push-forward.
    \begin{proposition}\label{prop:push-f-star}
      Suppose \(j:\calZ_0\to \calZ\) is are regular \(H\)-equivariant embedding and
      \(F\in \CC[\calZ]^G\)
      Then there is a well-defined functor
      \[j_\star:\MF(\calZ_0,H^{dg},j^*(F))\to \MF(\calZ,H^{dg},F).\]
      Moreover, this functor preserves tameness:
      \[j_\star:\MF^{tame}(\calZ',H^{dg},j^*(F))\to \MF^{tame}(\calZ,H^{dg},F)\]
    \end{proposition}
    \begin{proof}
      In our paper \cite[Section 3]{OblomkovRozansky16} we construct functor \(j_*\) as in \ref{eq:j}.
      Construction of the functor \(j_\star\) almost verbatim repeats the construction from \cite{OblomkovRozansky16} but we provide details that are concerned with the tameness condition. In particular, the functor \(\mathrm{KSZ}_{U(\frg)}\) from \ref{prop:KSZ-g} intertwines  \(j_*\) and \(j_\star\).

      First, we discuss the case when \(H\) is reductive. Let \(\calF=(M,D)\in \MF(\calZ_0,H,j^*(F))\), \(M=\CC[\calZ_0]^n\) be a strongly \(H\)-equivariant matrix  factorization. First, we construct a pair \((\tilde{M},\tilde{D})\), \(\tilde{M}=\CC[\calZ]^n\) that lifts \((M,D)\), that is \(\tilde{D}|_{\calZ_0}=\calD\).

      A vector space of all lifts of a scalar multiple of \(\tilde{D}\) is a non-zero vector space and it is \(H\)-representation. Since \(\calD\) is \(H\)-invariant
      the last \(H\)-representation has a summand that is a trivial \(H\)-representation. Thus there is an \(H\)-invariant lift \(\tilde{D}\).

      The structure sheaf of \(\calZ_0\) has a free \(H\)-equivariant \(\CC[\calZ]\)-resolution
      \((C_\bullet,d^+)\). Since, embedding is regular we have
      \begin{equation}\label{eq:pos-Hom}
        \Hom^{<0}_{d^+}(C_\bullet,C_\bullet)=0.\end{equation}

      Next, we observe that \(\tilde{D}^2-j^*(F)\) vanishes on \(\calZ_0\) hence there is \(h^{(-1)}\in \Hom^{(-1)}(C_\bullet,C_\bullet)\otimes \Hom(\tilde{M},\tilde{M})\) such that \([d^+,h^{(-1)}]=\tilde{D}^2-j^*(F)\).  Since \(d^+\) and \(\tilde{D}^2-j^*(F)\) the same argument as before implies that we can choose \(h^{(-1)}\) to be \(H\)-equivariant.

      Let \(D^{(1)}=D+d^+-h^{(-1)}\). By the above discussion \((D^{(1)})^2-j^*(F)\in
      \Hom^{<-1}(C_\bullet,C_\bullet)^H\otimes\Hom(\tilde{M},\tilde{M})^H\). One can show
      that last expression commutes \(d^+\).

      We can use \ref{eq:pos-Hom} to construct step by step  correction \(h^{(-2i-1)}\in  \Hom^{-2i-1}(C_\bullet.C_\bullet)^H\otimes\Hom(\tilde{M},\tilde{M})^H \) such that
      \(D^{(-2j+1)}=D+d^++\sum_{k=0}^{j-1}h^{(-1-2k)}\) satisfies
      \[ (D^{(-2j+1)})^2-j^*(F)\in  \Hom^{<-2j}(C_\bullet,C_\bullet)^H\otimes\Hom(\tilde{M},\tilde{M})^H.\]
      The details of argument can be found in the proof of \cite[Lemma 3.1]{OblomkovRozansky16}. Since the complex \(C_\bullet\) is bounded, some large \(N\),
      \(D^{(-2N+1)}\) satisfies \((D^{(-2N+1)})^2=j^*(F)\).

      It is also shown in \cite[Lemma 3.2]{OblomkovRozansky16} that element \(\tilde{\calF}=(\tilde{M},D^{(-2N+1)})\) is unique up to isomorphism. Thus we can define
      \(j_\star(\calF)=\tilde{\calF}\).

      Now we consider the opposite case \(H\) is a unipotent group. This case was also treated in \cite[Lemma 3.5]{OblomkovRozansky16}. But the tameness and dg structure is not discussed there so provide this discussion here. The case of general \(H\) is a straight-forward combination of the reductive and unipotent case.

      As before we start with \(\calF=(M,D,\delta)\in \MF(\calZ_0,H^{dg},F)\) and \((C_\bullet,d^+)\) being \(H\)-equivariant resolution of the structure sheaf of \(\calZ_0\). By the previous
      argument there is \((\tilde{M},\tilde{D})\in \MF(\calZ,j^*(F))\), \(\tilde{D}=D+d^++d^{-}\), \(d^-=\sum_{k=0} h^{(-1-2k)}\) that extends
      \(\calF\) from \(\calZ_0\). However, we can not make \(d^-\) \(H\)-equivariant since \(H\) is not reductive.

      Here everywhere below we use short-hand notation \(\Hom^k=\Hom^k(C_\bullet,C_\bullet)\). Given \(A\in \Hom(\Lambda^\bullet\frh,\Lambda^\bullet\frh)\otimes \Hom(C_\bullet,C_\bullet)\) we also use notation \([A]_k\),  
for the part in \(\Hom(\Lambda^\bullet\frh,\Lambda^\bullet\frh)\otimes \Hom^k\).

      Since \((\delta+d_{ce})^2=[(\tilde{D}+\delta+d_{ce})^2-j^*(F)]_0\) vanishes on \(\calZ_0\), there is \(\tilde{h}^{(-1)}\in \Hom(\Lambda^\bullet\frh,\Lambda^{<\bullet})
      \otimes \Hom^{-1}\) such that \([\tilde{h}^{(-1)},d^+]=(\delta+d_{ce})^2\).
       Finally, let us observe that since \(d_{ce},\delta\) are polynomials of
      \(\xi_i^\vee\) as in \ref{prop:KSZ-g}, we choose \(\tilde{h}^{(-1)}\) to be of form
      \(\sum_i \phi_i\xi_i^\vee\), \(\phi_i\in \Hom^{-1}\).
      Let \(D^{(0)}=\tilde{D}+\delta+d_{ce}+\tilde{h}^{(-1)}\). By construction
      \( (D^{(0)})^2-F\in \Hom(\Lambda^\bullet\frh,\Lambda^\bullet\frh)\otimes \Hom^{<0}\).
      
      As explained, in \cite[Lemma 3.5]{OblomkovRozansky16} we can iterate the above process and obtain \(h^{(-k)}\in \Hom(\Lambda^\bullet\frh,\Lambda^\bullet\frh)\otimes \Hom^{-k}\) such that:
      \[ (D^{(k)})^2-j^*(F)\in \Hom(\Lambda^\bullet\frh,\Lambda^\bullet\frh)\otimes \Hom^{<-k},\quad D^{(k)}=D^{(k-1)}+h^{(-k)}.\]
      Moreover, by the above remark we can assume that \(h^{(-k)}\) is polynomial in
      \(\xi_i^\vee\) of degree \(k\).

      Thus for \(N\) greater that the length of the complex \((C_\bullet,d^+)\) we get
      \((D^{(N)})^2=j^*(F)\)  hence we can set \(j_\star(\calF)= (\tilde{M},\tilde{D},\tilde{\partial})=\) where \(\tilde{\partial}=\delta+\sum_k \tilde{h}^{(-k)}\).

      It is shown in \cite[Lemma 3.6]{OblomkovRozansky16} that the 
      different choices of \(\tilde{h}^{(k)}\) result in isomorphic matrix  factorizations. Also in \cite[Lemma 3.7]{OblomkovRozansky16} we show that the iterative process from above can be used to lift morphism between the objects. Thus we obtain a well-defined functor.

      To show the last statement of the proposition we observe that the coefficient
      \([\xi_I^\vee](\tilde{\partial})\) in  front of \(\xi_I^\vee\) in \(\tilde{\partial}\) is the sum of
      \[[\xi_I^\vee](\delta)\in \Hom(\Lambda^\bullet\frh,\Lambda^\bullet\frh)\otimes \Hom^0,\quad [\xi_I^\vee](\sum_k \tilde{h}^{(-k)})\in \Hom(\Lambda^\bullet\frh,\Lambda^\bullet\frh)\otimes \Hom^{<0}, \]
      Since \([\xi_I^\vee](\delta)\) is strictly upper-triangular \([\xi_I^\vee](\tilde{\partial})\) is strictly-upper triangular too.

    \end{proof}

%    Suppose \(H=H'\times H''\), \(\frh=\frh'\oplus \frh''\) and \(H'\) acts on \(\calZ'\) and \(H''\) acts on \(\calZ''\).
  %  In this situation we have a well-defined functor:
   % \[\boxtimes: \MF_{H'}(\calZ',F')\times \MF_{H''}(\calZ,F'')\to \MF_{H}(\calZ'\times\calZ'',F'+F''),\]
   % \[(M',D',\partial')\times (M'',D'',\partial'')
   % \mapsto (M'\otimes M'', D'\otimes 1+1\otimes D'', \partial' \otimes 1+1\otimes \partial'').\]

  % We use this type of restriction implicitly in the tensor product functor (see
  %   \cite[Definition 3.22]{BallardFaveroKatzarkov13} for generalizations)
    % \[\boxtimes: \MF_{H}(\calZ_1,F_1)\times \MF_{H}(\calZ_2,F_2)\to \MF_{H}(\calZ_1\times \calZ_2,F_1+F_2),\]
    % where \(\calF\boxtimes \calG=\mathrm{res}_\Delta(\calF\boxtimes\calG)\). Respectively, we define the tensor product for
    % \(\calF,\calG\in \MF_H(\calZ,F)\) as
    % \[\calF\otimes\calG=\Delta^*(\calF\boxtimes\calG)\]
    % for the diagonal embedding \(\Delta:\calZ\to \calZ^2\).

% In our previous work we defined the push-forward and pull-back functors for some restricted class of maps \(f\). In the next section we discuss the details of the
 %these functors and prove

%THIS THEOREM IS NOT CORRECT!
 
 % \begin{proposition}\label{ref:check-fun}
 %   Suppose \(G=G_1\times G_2\) and \(G_1\) acts freely on \(\calZ\). 
 % Then   there is a fully-faithful functor from \(G_1\)-strongly and \(G_2\)-weakly equivariant category 
 % \[\check{C}: \MF_{G_1,G_2}^{\infty}(\calZ,F)\to \MF_{G_1}^{\infty,str}(\calZ//G_2,F).\]
 % This functors intertwines push-forward and pull-back functors and tensor product operations.
% \end{proposition}

\subsection{Properties of push-forward functors}
\label{sec:prop-push-forw}

In this section we show that the push-forward functors \(j_*\) and \(j_\star\) satisfy properties analogous to the properties listed in proposition \ref{prop:push-f-list}.
The key property is the fourth property in \ref{prop:push-f-list}.

\begin{proposition}\label{prop:projection}
  Let \(j:\calZ\to \calY\) be a regular \(G\)-equivariant embedding then
  \[j_*(\calF\otimes j^*(\calG))=j_*(\calF)\otimes\calG,\quad j_\star(\calF\otimes j^*(\calG))=j_\star(\calF)\otimes \calG\]
  for \(\calF\in \MF(\calZ,G^\sim,W),\calG\in\MF(\calY,G^\sim,W')\) and  \(\calF\in \MF(\calZ,G^{dg},W),\calG\in\MF(\calY,G^{dg},W')\).
  \end{proposition}
  \begin{proof}
    Let us discuss the second equation since the first equation has almost the same proof. Let \(\calF=(M,D_1,\partial_1)\) and \(\calG=(N,D_2,\partial_2)\).

    The pull-back functor \(j^*\) is defined by the restriction of the differentials   \[j^*(\calG)=(M|_\calZ,D_2|_\calZ,\partial_2|_\calZ).\] As explained in the proof of proportion~\ref{prop:push-f-star}, at the first step of construction
    of \(j_\star(\calF\otimes j^*(\calG))\) we need to choose a lift of \(\CC[\calZ]\)-modules
    \(N\otimes M|_\calZ \) and of the differentials \(D_1,\partial_1\),
    \(D_2|_\calZ,\partial_2|_\calZ\). Let \(\tilde{N}\), \(\tilde{D}_1,\tilde{\partial}_1\) be the lifts for \(\calF\). Clearly, \(\calG\) is
    a lift of \(\calG|_\calZ\).

    On the other hand to construct \(j_\star(\calF)\otimes \calG\) we need to pick a lift
    \(\tilde{\calF}\) of \(\calF\). We can choose the lift to be \((\tilde{N},\tilde{D}_1,\tilde{\partial}_1)\) from the previous paragraph.

    To complete construction of \(j_\star(\calF\otimes j^*(\calG))\) and of
    \(j_\star(\calF)\otimes \calG\) we need perform perturbative procedure on the curved complex \(\tilde{\calF}\otimes \calG\otimes C_\bullet\), here \(C_\bullet\) is a resolution of \(\CC[\calZ]\) in terms of locally free \(\CC[\calY]\)-modules.
    Since the outcome of the perturbative procedure is defined (up-to an isomorphism)
    by the initial data, the statement follows.
  \end{proof}

  By a standard argument we derive

  \begin{corollary}\label{cor:adj} Let \(j:\calZ\to \calY\) be a regular embedding.
    Then pairs functors \(j_*,j^*\), \(j_\star,j^*\) from the precious proposition
    are adjunction pairs.
  \end{corollary}

\subsection{Strictification}
\label{sec:strictification}

In the remark~\ref{rem:tame} (see below) suggests that in general we can not expect equivalence between the categories \(\MF(\calZ,G^{dg},F)\) and \(\MF(\calZ,G,F)\) even if the \(G\)-action is free.  However, the equivariant push-forward used for matrix factorizations  in our previous papers \cite{OblomkovRozansky16} preserves slightly smaller category
\(\MF^{tame}(\calZ,G^{dg},F)\) which we define above. The main result of this section is a construction of strictification for  the elements of
the tame subcategory \(\MF^{tame}(\calZ,G^{dg},F)\) under the assumption of free \(H=R_u(G)\)-action.

In this section we relate the tame matrix  factorizations to the matrix factorizations on the quotient. First notice that if the action of \(G\) on \(\calZ\) is free then
there is a natural inclusion functor \[\mathbf{I}: \MF(\calZ/G,F)\to \MF(\calZ,G^{dg},F).\] The image of functor consists of the equivariant
matrix  factorizations with vanishing correcting differentials. It turns out that that
the category of tame equivariant matrix  factorizations retracts to the image of \(\mathbf{I}\)

\begin{proposition}\label{prop:strict}
  Let us assume that \(G\) acts freely on \(\calZ\) and \(H^1(\calZ/G,\QQ)=0\). 
  Then there is an equivalence of categories:
  \[\mathbf{Q}: \MF^{tame}(\calZ,G^{dg},F)\to \MF(\calZ/G,F)\]
   that is \(\mathbf{Q}\circ \mathbf{I}\simeq \mathrm{Id}\) and there is an invertible natural
  transformation:
  \(\mathbf{I}\circ \mathbf{Q}\Rightarrow \mathrm{Id}.\)
\end{proposition}
\begin{proof}
  Given \(\calF,\calF'\in \MF(\calZ,G^{dg},F)\) the space of morphisms
  \(\Psi\in\Hom(\calF,\calF')\) is filtered by Chevalley-Eilenberg degree and
  we denote by \(gr_i(\Psi)\) the corresponding associated graded piece.

  To prove the statement it is enough to construct for each \[\calF=(M,D,\partial)\in
  \MF^{tame}(\calZ,G^{dg},F)\] an element \(\calF'=(M,D',0)\in \MF(\calZ,G^{dg},F)\) and
  an invertible \(\Psi_{\calF}\in \Hom(\calF,\calF')\) such that \(gr_0(\Psi)\) is invertible.
  Indeed, we can define \(\mathbf{Q}(\calF)=\calF'\) and for \(\phi\in \Hom(\calF_1,\calF_2)\), \(\mathbf{Q}(\phi)=\Psi_{\calF_2}^{-1}\circ \phi\circ \Psi_{\calF_1}\).  Respectively, the morphisms \(\Psi_{\calF}\) define the natural
  transformation  \(\mathbf{I}\circ \mathbf{Q}\Rightarrow \mathrm{Id}\) from the statement.

  First we construct the element \(\Psi_{\calF}\) in the case \(\calZ=\calZ'\times G\)
  and the group action is by the left multiplication on last factor. Since \(\frh=\Lie(H)\)
  is nilpotent, there is a non-zero element from the center of  \(\frh\). Let us assume that this element is \(\xi_1\), that is a first element of the basis of \(\frh\) used in \eqref{eq:d_i}.

  The \(H\) is nilpotent and we can assume \(H=\CC^N\) coordinates \(\xi_i^\vee\) and action of \(\xi_1\) on \(\CC[H]\) is the derivative \(\frac{\partial}{\partial \xi_1^\vee}\).
  Since each \(D_{I}\)  is strictly upper-triangular, there are  elements \(\phi_I\in \Hom_{\CC[\calZ]}(\bar{M},\bar{M})\), that is matrices with entries in \(\CC[\calZ]\), such that
  \begin{equation}\label{eq:cycle}
    \frac{\partial \phi_I }{\partial \xi_1^\vee}=D_{I\cup 1}\phi_I.\end{equation}

  Thus we define \(\Psi_{\calF}^{(1)}=\Id+\sum_I \phi_I\xi^\vee_I\). By construction
  \(\Psi_{\calF}^{(1)}\) is invertible and the total differential \(D^{(1)}_{tot}=(\Psi_{\calF}^{(1)})^{-1} D_{tot} \Psi_{\calF}^{(1)}\) does not contains correction differentials that contain \(\xi_1^\vee\). Hence the corresponding matrix  factorization \(\calF^{(1)}\) is naturally an object of category \(\MF(\calZ^{(1)},(G^{(1)})^{dg},F)\) where \(\calZ^{(1)}=\calZ'\times G^{(1)}\),
  \(G^{(1)}=G/H^{(1)}\), \(\mathop{Lie}(H^{(1)})=\langle\xi_1\rangle\).

  By iterating this construction we reduce to the case \(R_u(G)=\{1\}\) where the problem is trivial. To complete our proof we need to cover \(\calZ/G\) by affine
  charts  \(\calZ'_i\), \(i=1,\dots,M\) such that \(\calZ=\bigcup_{i} \calZ'_i\times G\).
  For each chart we have element \(\Psi_{\calF_i}\), \(\calF_i=\calF|_{\calZ'_i\times G}\) as constructed above.

  Over the intersections \(\calZ'_{ij}=\calZ'_i\cap \calZ'_j\) the
  elements may differ, that is\(\Psi_{ij}=\Psi_{\calF_i}\Psi_{\calF_j}^{-1}\) may be non-trivial. However, elements \(\psi_{ij}=\Psi_{ij}-1\) are constant nilpotent matrices by the uniqueness theorem for the differential equation \eqref{eq:cycle}.

  Thus \(\psi_{ij}\) form cocycle for cohomology group \(H^1(\calZ/G,\frh)\). Since
  \(H^1(\calZ/G,\QQ)=0\) and \(\frh\) is nilpotent, we have \(H^1(\calZ/G,\frh)=0\) and
  \(\Psi_{ij}=\Upsilon_i\Upsilon_j^{-1}\). By twisting \(\Psi_{\calF_i}\) with \(\Upsilon_i\)  one obtains a collection \(\Psi_{\calF,\calZ'_i}\)  which form a Cech
  presentation of the desired element \(\Psi_{\calF}\).
  
\end{proof}

\begin{remark} In the proposition
  the condition of freeness of the action of \(G\) can be relaxed to the condition of freeness of \(H=R_u(G)\)-action. Then get an equivalence of categories:
  \[\MF^{tame}(\calZ,G^{dg},F)\simeq \MF(\calZ/H,\bar{G},F).\]
\end{remark}

\begin{remark}\label{rem:tame} This remark was explained to us by Leonid Positselski.
  The tameness condition in the proposition is essential. To see that let us consider the case \(\calZ=U\) and \(G=U=\CC\) is a group of unipotent two by two matrices. The group \(G\) acts freely on the \(\calZ\) by the left multiplication and
  we would like to compare \(\MF(\calZ,U^{dg},0)\) with the category of
  two periodic complexes of vector spaces \(D^{per}(\pt)\). These categories do not appear to be equivalent, the former is a  two-periodic version of
  category of \(D\)-modules on \(\CC\).

  More concretely, if \(z\) is a coordinate along \(\calZ\) then a generator of \(\Lie(U)\) acts by \(\frac{d}{dz}\). A simplest
  object \(\MF(\calZ,U^{dg},0)\)  is a triple \(\calF=(\CC[z],\partial, 0)\) where \(\partial=f(z)\in \Hom(\CC[z],\CC[z])\). To find the intertwining operator \(\Psi_{\calF}\) from the previous proof we need to solve a differential equation
  \(\psi'=f\psi\). This differential equation  has  a non-zero polynomial solution if and only if \(f=0\).
  
\end{remark}

It is clear from our constructions that the strictification functor \(\mathbf{Q}\) intertwines
pull-back functors.
Moreover the strictification functor relates the push-forward and pull-back functors. 

\begin{proposition}\label{prop:star-star}
  Let \(j:\calZ\to \calY\) be a regular embedding and the conditions of proposition
  \ref{prop:strict} hold for \(\calZ\) and \(\calY\). Then functor \(\mathbf{Q}\) intertwines \(j_*\) with \(j_\star\).
\end{proposition}

\begin{proof}
  The functor \(\mathbf{Q}\) intertwines \(j^*:\MF^{tame}(\calZ,H^{dg},F|_\calZ)\to
  \MF^{tame}(\calY,H^{dg},F)\) with  \(j^*:\MF(\calZ/H^,F|_\calZ)\to
  \MF(\calY/H,F)\). On the other hand the first \(j^*\) is a left adjoint of \(j_\star\)
  and the second \(j^*\) is a left adjoint of \(j_*\). Hence the statement follows.
  \end{proof}

From now on we use notation \(j_*\) for \(j_\star\). In the cases we are most interested in the conditions of proposition~\ref{prop:strict} are satisfied so no confusion can possibly occur.

\subsection{Tensor products and quotients}    

    Suppose \(H=H_1\times H_2\), \(\frh=\frh_1\oplus \frh_2\) and \(H_i\) acts on \(\calZ_i\).
    In this situation we have a well-defined functor:
    \[\boxtimes: \MF(\calZ_1,H^\bullet_1,F_1)\times \MF(\calZ_2,H_2^\bullet,F_2)\to \MF(\calZ_1\times\calZ_2,H^\bullet,F_1+F_2),\]
    \[(M_1,D_1,\partial_1)\times (M_2,D_2,\partial_2)
    \mapsto (M_1\otimes M_2, D_1\otimes 1+1\otimes D_2, \partial_1 \otimes 1+1\otimes \partial_2).\]
  Here \(\bullet  \) can be either \(\sim\) or \(dg\). If \(\bullet=\emptyset\) then
  the correction differentials \(\partial_i\) vanish we get a standard definition of the
  exterior tensor product from \cite{BallardFaveroKatzarkov13}. Not all matrix
  factorizations from the category \(\MF(\calZ_1\times\calZ_2,H^\bullet,F_1+F_2)\)
  are of the above form.

  % We use this type of restriction implicitly in the tensor product functor (see
  %   \cite[Definition 3.22]{BallardFaveroKatzarkov13} for generalizations)
    % \[\boxtimes: \MF_{H}(\calZ_1,F_1)\times \MF_{H}(\calZ_2,F_2)\to \MF_{H}(\calZ_1\times \calZ_2,F_1+F_2),\]
    % where \(\calF\boxtimes \calG=\mathrm{res}_\Delta(\calF\boxtimes\calG)\). Respectively, we define the tensor product for
    % \(\calF,\calG\in \MF_H(\calZ,F)\) as
    % \[\calF\otimes\calG=\Delta^*(\calF\boxtimes\calG)\]
    % for the diagonal embedding \(\Delta:\calZ\to \calZ^2\).

    Finally, let us discuss the quotient map. The complex \(\CE_\frh\) is a resolution of the trivial \(\frh\)-module by free modules. Thus the correct derived
    version of taking \(\frh\)-invariant part of the matrix factorization \(\mathcal{F}=(M,D,\partial)\in\MFs(\calZ,H^\bullet,F)\), \(F\in\CC[\calZ]^\frh\),
    \(\bullet=\sim,dg\) is
\[\CE_\frh(\mathcal{F}):=(\CE_\frh(M),D+d_{ce}+\partial)\in\MFs(\calZ//H,F),\]
where \(\calZ//H:=\mathrm{Spec}(\CC[\calZ]^\frh )\) and use the general definition  for the complex of \(\frh\)-modules \(C_\bullet\):
\begin{equation}\label{eq:CE-def}
  \CE_\frh(C_\bullet):=\Hom_\frh(\CE_\frh(\CC[\calZ]),C_\bullet)=\mathrm{RHom}^*_{\frh}(\CC[\calZ],C_\bullet).
  \end{equation}
 The differential of the complex \(\CE_\frh(\CC[\calZ])\) inside
\(\Hom_\frh(\CE_\frh(\CC[\calZ]),-)\) commutes with the differential \(D+d_{ce}+\partial\) hence the latter differential descends  to
\(\CE_\frh(M)\).

If the potential \(F\) vanishes and \(\calF\) is strongly equivariant
then \(\CE_\frh(\calF)\) is the \(\CC[\calZ//H]\)-module of (derived) equivariant sections of \(\calF\). Similarly, in the case of non-vanishing \(F\) and strongly
equivariant matrix factorization \(\calF=(M,D)\) the \ChE matrix factorization
\(\CE_\frh(\calF)\) is homotopy equivalent to  a restriction of differential \(D\) to the \(\CC[\calZ//H]\)-module \(M^H\). We use notation
\[\calF^H\in \MF(\calZ//H,F)\]
for this restricted  matrix factorization. In particular, if the conditions of the proposition~\ref{prop:strict} are satisfied then
\[\CE_{\frh}(\calF)\simeq\mathbf{Q}(\calF).\]

The duality functor inverts the sign of the potential and dualizes  the underlying module:
\[\MF(\calZ,H^\bullet,F)\ni\calF\mapsto \calF^\vee\in\MF(\calZ,H^\bullet,-F),\quad (M,D,\partial)\to (M^\vee,D^\vee,\partial^\vee),\]
where \(M^\vee=M^*\) is the \(\frh\)-module that is dual to \(M\) and \(\bullet=\sim,dg\). The differentials are defined in terms of dual maps:
\[D^*_i: M_i^*\to M_{i+1}^*,\quad \partial^*\in \oplus_{i>j}\Hom(M_\bullet^*\otimes\Lambda^i\frh,M_\bullet^*\otimes \Lambda^j\frh).\]

Let us decompose the morphism \(\partial^*\) into isotypical components:
\[\partial^*=\sum_{i,j}\partial^{i,j *}_{k,l},\quad \partial_k^{i,j*}\in \Hom(M_k^*\otimes \Lambda^i\frh,M^*_l\otimes \Lambda^j\frh).\]
With these conventions we define
\[D^\vee_i=(-1)^i D^*,\quad \partial_{k,l}^{i,j\vee}=(-1)^{(k-l)k}(-1)^{(i-j+1)i}\partial_{k,l}^{i,j*}.\]
The signs pattern in the above formulas guarantees that the total differential \(D^\vee+\partial^\vee+d_{ce}\) squares to \(-F\). Also the space 
of derived homomorphisms
\begin{equation}\label{eq:ext-def}
\calE xt(\calF,\calG)=\CE_{\frh}(\calF^\vee\otimes\calG)\end{equation}
is a two-periodic complex of \(\CC[\calZ//H]\)-modules and the even part of its homology is the space of morphisms
\(\Hom(\calF,\calG)\). Let us also remark that in the case of trivial \(H\)  and \(\calZ\) is quasi-affine there is an isomorphism between \(\calE xt(\calF,\calG)\) and
\(\calE xt(\calG,\calF)\), because there is an isomorphism between a free module and its dual. 
This symmetry does not hold in the case when \(H\) is non-trivial.

Finally, let us point out that the space of morphisms between \(\calF,\calG\) is computed by computing homology:
\[\Hom^*(\calF,\calG)=H^*(\calE xt(\calF,\calG))\]

%\subsubsection{}

In the equivariant setting (i.e \(f\) is \(H\)-equivariant) the push-forward \(f_*\) commutes with
Chevalley-Eilenberg    functor.

\begin{proposition}
  Let \(f: \calZ\to\calY\) be an \(H\)-equivariant map that is either regular embedding or flat affine then
  \begin{equation} \label{eq:pushf-ce}
    \CE_\frh(f_*(\calF))=f_*\CE_{\frh}(\calF).
  \end{equation}
for any \(\calF\in \MF^{\infty}_H(\calZ,f^*(W))\).  
\end{proposition}
\begin{proof}
  By the conditions of the proposition the push-forward \(f_*\) and pull-back \(f^*\)
  are exact when restricted to an affine chart \(\calY_J\), \(\calZ_J=f^{-1}(\calY_J)\),
  \(\calZ=\bigcup_i \calZ_i\), \(\calY=\bigcup_i\calY_i\).
Hence \(f_*\) is the left adjoint of the pull-back \(f^*\), \(f^*(\calO_{\calY_J})=\calO_{\calZ_J}\):
\begin{equation*}
  \CE_\frh(f_*(\calF))=\mathrm{RHom}_{\frh}(\CC[\calY_J],f_*(\calF))=
  f_*\mathrm{RHom}_\frh(f^*(\CC[\calY_J]),\calF)=
  f_*\CE_{\frh}(\calF).
\end{equation*}
Since space of morphism is computed with the Cech complex, the local computation above
implies the statement.
\end{proof}

% \begin{proof}[Proof of proposition~\ref{prop:check-fun}]
%   Suppose \(\calZ=\cup_{i\in I}\widetilde{\calZ}_i\)       is an \(H\)-equivariant cover such that \(\calZ_i=\widetilde{\calZ}_i//H_2\) are affine.
%   Thus we construct a Cech resolution of \(\calF\):
  % \[\calF\to \check{C}_{\widetilde{U}}(\calF),\quad \check{C}_{\widetilde{U}}(\calF)_{J}=j_{\widetilde{U}_J,*}\circ j_{\widetilde{U}_J}^*(\calF),\]
  % \(\widetilde{\calZ}_j=\cap_{i\in J}\widetilde{\calZ}_i\).
  % Let us assume that on each chart we have splitting \(\widetilde{Z}_J=\calZ_J\times H_2\), \(\calZ_J=\widetilde{\calZ}_J//G_2\). Thus
  % \(\CE_{\frh}(\CC[\widetilde{\calZ}_J])=\CC[\calZ_J//H]\) and we have a homotopy
 %  \[\CE_{\frh_2}(j_{\widetilde{U}_J,*}\circ j_{\widetilde{U}_J}^*(\calF))=
 %    j_{\widetilde{U}_J,*}\CE_{\frh_2}(\calF|_{\widetilde{U}_J})\sim
 %        j_{U_J,*}\CE_{\frh_2}(\calF|_{U_J}).\]
 % Hence two-periodic  complex
 % \(\CE_{\frh_2}(\check{C}(\calF)_\bullet)\) is homotopy equivalent to
 % an element of
 % \[\CE_{\frh_2}(\check{C}(\calF)_\bullet)\sim \check{C}(\calF)\in H^0(\mathbf{Qcoh}_{H_2}(\calZ//H_1,F)).\]
 
%\end{proof}

\subsection{Base change}
\label{sec:base-change}

Just as in the case of coherent sheaves we have the smooth base change isomorphism.
In more details, suppose we have affine manifolds \(\calZ,\calZ',\calY,\calY'\), \(\calZ'=\calZ\times_{\calY'}\calY\) and
the corresponding potentials \(F, F',W,W'\) that
fit into the commuting diagrams of \(H\)-equivariant maps of spaces:
\[\begin{tikzcd}
    \calZ'\ar[r,"g'"]\ar[d,"f'"]&\calZ\ar[d,"f"]\\
    \calY'\ar[r,"g"] &\calY
  \end{tikzcd},\quad \quad
\begin{tikzcd}
    \MF(\calZ',H^\bullet,F')\ar[d,"f'_*"]&\MF(\calZ,H^\bullet,F)\ar[l,"g^{\prime *}"]\ar[d,"f_*"]\\
   \MF( \calY',H^\bullet,W') &\ar[l,"g^*"]\MF(\calY,H^\bullet,W)
  \end{tikzcd},
\]
where \(\bullet=\sim, dg\).

\begin{proposition}\label{prop:base-change}
Let's assume that the map \(f\) are either smooth affine projections or regular embeddings.
Then if \(g\) is a flat map then we have a natural transformation
that identifies the functors:
\[g^*\circ f_* \simeq f'_*\circ g^{\prime*}.\]
\end{proposition}
\begin{proof}
  Let \(\calF=(M,D,\partial)\in \MF(\calZ,H^\bullet,F)\). If \(f\) is a smooth affine projection then \(f_*\) is an exact functor which is defined by the scalar restriction from \(\CC[\calZ]\) to \(\CC[\calY]\). Respectively, \(g^*\) is a functor of tensor
  product \(\otimes_{\CC[\calZ]}\CC[\calY']\).

  On the other hand \(g^{\prime*}\) is a functor of the tensor product \(\otimes_{\CC[\calZ]}\CC[\calZ']\). The push-forward functor \(f'_*\) is a functor of
  restriction of scalars from \(\CC[\calZ']\) to \(\CC[\calY']\). Since the restriction
  turns \(\otimes_{\CC[\calZ]}\CC[\calZ']\) into \(\otimes_{\CC[\calZ]}\CC[\calY']\) the statement follows.

  In the case when \(f\) is a regular embedding we argue similarly to the projection formula proof, see proposition~\ref{prop:base-change}. Indeed, to construct \(f_*(\calF)\) we need to choose  a lift \(\tilde{\calF}\) of \(\calF\) to \(\calY\)
  that is compatible with surjective homomorphism \(\CC[\calY]\to \CC[\calZ]\).

  Let \(C_\bullet\) be a resolution of the structure sheaf of \(f(\calZ)\) in terms of
  locally-free \(\CC[\calY]\)-modules. To complete construction of \(f_*(\calF)\)
  we need to run the perturbation computation outlined in the proof of proposition~\ref{prop:push-f-star}. The starting point of the perturbation is
  \(\tilde{\calF}\otimes C_\bullet\).

  Finally, \(g^*\circ f_*(\calF))\) is defined as
  \(f_*(\calF)\otimes_{\CC[\calY]}\CC[\calY']\). A key observation is that flatness of \(g\) implies that if we run the perturbation procedure of \(\tilde{\calF}\otimes C_\bullet\otimes_{\CC[\calY]}\CC[\calY']\) then we get the matrix factorization \(g^*\circ f_*(\calF)\).

  On other hand \(g^{\prime*}(\calF)\) is defined as \(\calF\otimes_{\CC[\calZ]}\otimes \CC[\calZ']\). Thus to construct \(f'_*\circ g^{\prime*}(\calF)\) we need choose a
  lift \(\widehat{\calF}\) of \(\calF\otimes_{\CC[\calZ]}\otimes\CC[\calZ']\). So we pick
  \(\tilde{\calF}\otimes_{\CC[\calY]}\CC[\calY']\) as a lift.

  Thus to compute
  \(f'_*\circ g^{\prime *}(\calF)\) we need to perform the perturbation procedure from proposition~\ref{prop:push-f-star} with the starting object \(\tilde{\calF}\otimes_{\CC[\calY]}\CC[\calY']\otimes_{\CC[\calY']}C'_\bullet\). Here \(C'_\bullet\) is a resolution of the structure sheaf of \(f'(\calZ')\) in terms of \(\CC[\calY']\) locally-free sheaves.
  Finally, let us observe that the complexes \(\CC[\calY']\otimes_{\CC[\calY]}C_\bullet\)
  and \(C'_\bullet\) are homotopy equivalent. The uniqueness of the outcome of the perturbation procedure implies the statement.
  
\end{proof}

\subsection{Knorrer periodicity} The equivariant version of Knorrer periodicity \cite{Knorrer}, \cite{OblomkovRozansky16} is used in the arguments of the current paper.
In the equivariant setting the equivalence is studied in \cite{Hirano17a}.

We
include the statement for convince  of reader. Let \(\calX\) be a quasi-projective
manifold with \(G\)-action where \(G\) is reductive. Let \(E\) be a \(G\)-equivariant
vector bundle and \(s\in \Gamma(X,E^\vee)\) a regular section. If \(\chi\in \hat{G}\) is
a character of \(G\) then \(E(\chi)\) is the \(\chi\)-equivariant twist of \(E\). In this setting \(s\) defines a function \(Q_s\in \CC[E]_\chi^G\).

Let \(\calZ\subset E(\chi)\) be an zero locus of \(s\) and \(i: E(\chi)|_{\calZ}\to E(\chi)\) is the natural inclusion.
Respectively, \(p: E(\chi)|_{\calZ}\to \calZ\) is the natural projection.

\begin{proposition}\label{prop:Kno}\cite[Theorem 1.2]{Hirano17a}
  Let \(W:\calX\to \mathbb{A}^1\) be \(\chi\)-semi-invariant function, such that
  \(W|_{\calZ}\to \mathbb{A}^1\) is flat. Then there is an equivalence:
  \[\MF(\calZ,G,\chi,W|_\calZ)\to \MF(E(\chi),\chi,p^*W+Q_s).\]
\end{proposition}

% \subsubsection{}
% Finally, let us mention that   the strictly equivariant matrix factorizations are equivariant.  That is there is a functor 
% \[\MF_H^{str}(\calZ,F)\to \MF_H(\calZ,F),\quad (M,D)\to (M,D,0).\]
% % In the case  \(H=\CC^*\) it can be shown that the functor is fully-faithful.
% To distinguish the strongly equivariant matrix factrizations from the
% matrix factorizations defined in section~\ref{sec:eqMFdef}, we sometimes call the latter {\it weakly equivariant} matrix factorizations.

\section{Chern functor}
\label{sec:chern-functor}

\subsection{Main categories: equivariant structures}
\label{sec:eq-strs}
In this subsection we introduce the main examples of the equivariant matrix factorizations that are used in our work. We concentrate on the details of
the equivariant structures and of the categories. Our categories have natural monoidal structure and it is defined in the section \ref{sec:properties}
where we discuss the properties of the Chern functor.

Denote $G = \GLn$, $B\subset G$ being its Borel subgroup (of upper-triangular matrices).
Also \(T\subset B\) is the diagonal torus \((\mathbb{C}^*)^n\) and \(B=T\ltimes U\).
Here \(U\) is the group of the unipotent upper-triangular matrices and \(\mathrm{Lie}(U)=\frn\subset \frg=\gln\).

Recall that the flag variety $\Fl$ is a quotient: $\Fl = G/B$ and similarly its cotangent bundle $\TsFl$ is a quotient: \[\TsFl = (G \times\frn)/B.\]
A bilinear pairing \((X,Y)\mapsto \Tr(X Y)\) provides a \(\frg\)-invariant isomorphism between \(\frg\) and \(\frg^*\).
By combining this isomorphism with the \(G\)-action moment map we obtain
the map $\TsFl\rightarrow \frg$, $\mu(g,Y) = \Adv{g} Y$, where $(g,Y)\in G\times\frn$.

\begin{defn}
The main (unframed) monoidal category in~\cite{OblomkovRozansky16} is the category of equivariant matrix factorizations
\begin{equation}
\label{eq:stfrmf1}
\MF
=\MF^{tame}(\scX,G\times B^{dg}\times B^{dg}\times \Tqt,q^0t,W),
\end{equation}
%\[
%\MF = \MFG(
%\]
%
%
%The main category studied in \cite{OblomkovRozansky16} is the category of matrix factorizations on the space \(\scX\) with
%the potential \(W\), where
where
\[\scX:=\frg\times (G  \times \frn)\times (G \times\frn),\quad W(X,g_1,Y_1,g_2,Y_2)=\Tr(X(\Ad_{g_1}Y_1-\Ad_{g_2}Y_2))\]
and the action of an element $(h,\grbo,\grbt)\in \GBt$ on $\scX$ is
\begin{equation}\label{eq:actionBB}
(h;\grbo,\grbt)\cdot(X;g_1,Y_1;g_2,Y_2) = (\Adv{h}X;hg_1\grbo^{-1},\Adv{\grbo}Y_1;
hg_2\grbt^{-1},\Adv{\grbt}Y_2).
\end{equation}
%
%Here and everywhere below we use notation \(G\) for \(\GL_n\) and \(\frg\) for \(\frgl_n\), respectively
%\(\frn\) for the Lie algebra of the strictly upper triangular matrices inside
%\(\frgl_n\). We also use notation \(B\) for the group of the upper-triangular matrices and \(\frb\) for its Lie algebra.
%
%There is a unique action of \(B^2\) on the space \(\scX\) that extends the conjugation action on \(\frn^2\) and preserves
%the potential. Similarly, there is a unique commuting with \(B^2\) action of \(G\) on the space that extends the conjugation
%and preserves the potential. The category with the monoidal structure is the category of the \(G\times B^2\)-equivariant matrix factorizations
%\[\MF:=\MF_{G\times B^2}(\scX,W),\]
\end{defn}

Since \(\scX/B^2=\frg\times \TsFl\times\TsFl\)
by proposition~\ref{prop:strict}  the category \(\MF\) is equivalent to
\(\MF\left(\frg\times  \TsFl\times\TsFl,\GL_n\times\Tqt,tq^0,\mu_1-\mu_2\right)\) from the introduction.

The elements of \(\MF\) are strongly  equivariant with respect to \(\mathbb{T}_{q,t}=\CC^*_q\times\CC^*_t\) action on the ambient space
\begin{equation}\label{eq:qt-act}
  (\lambda,\mu)\cdot(X;g_1,Y_1;g_2,Y_2)=(\lambda^2 X;g_1, \lambda^{-2}\mu^2Y_1; g_2,\lambda^{-2}\mu^2Y_2 ).\end{equation}
The action of \(\mathbb{T}_{q,t}\) translates into \(q,t\)-grading by the rule:
\[ \mbox{for a function } f \mbox{ such that } (\lambda,\mu)\cdot f= \lambda^{a}\mu^b f,\quad \deg(f)=q^at^b. \]

One can observe that the potential \(W\) has weight \(t^2\) with respect to the action of \(\mathbb{T}_{q,t}\). Thus we use the following definition of
\(\mathbb{T}_{q,t}\)-equivariance of a matrix factorization \((M,D,\partial)\):
\begin{equation}\label{eq:Tqt-equiv}
   \mbox{the differentials } D, \partial \mbox{ are of degree } q^0t. 
 \end{equation}

 \begin{remark}The standard homological algebra convention is that the differentials preserve the equivariant degree. It is different from our convention since
   we assume that the differentials raise the \(t\)-degree by \(1\): the usual convention can not be used in our case since we work with the complexes curved
   by the potential of \(t\)-degree \(2\).
In particular, since \(qt\) weights of functions are even, to satisfy the condition \eqref{eq:Tqt-equiv} we need to shift standard \(qt\)-grading by, possibly odd, \(t\)-degree.
\end{remark}

Furthermore, the elements of the category \(\MF\) are strongly equivariant with respect to the tori \(\mathbb{T}_{q,t}\), \(T^2\subset B^2\) and \(G\) and weakly equivariant with respect to
\(U^2\). That is for every \(\calF\in \MF \) the assumption
\begin{equation}\label{eq:assum}
  \calF \mbox{ is strongly } \mathbb{T}_{q,t}\times G\times T^2 \mbox{-equivariant and  is weakly } U^2\mbox{-equivariant}
  \end{equation}

  The first part of assumption the (\ref{eq:assum}) holds for every category of matrix factorizations that we use in this paper. The matrix factorizations used in
  our paper are on the spaces with the natural action of the group \(\mathbb{T}_{q,t}\times G\times B^l\) where \(l\) can be \(0,1,2\). We assume weak
  equivariance for the action of \(U^l\) and the strong equivariance of all reductive
  quotients.

\begin{defn}  The Drinfeld center category is defined as:
\[
\MF_{\dr}:=\MF(\scc,G\times \Tqt,tq^0,W_{\dr}),\quad
\scc=\frg\times G\times \frg,\quad W_{\dr}(Z,g,X)=\Tr(X(\Ad_gZ-Z)),
\]
the group $G$ acting on components of $\scc$ by conjugation:
\[h\cdot (Z,g,X)=(\Ad_h(Z),hgh^{-1},\Ad_h(X))\]
\end{defn}

  In particular, the first part of the assumption (\ref{eq:assum}) is true for \(\MF_{\dr}\).
The group \(B\) does not act on \(\scc\) thus the category consists of strongly equivariant matrix factorizations.  Also the action of \(\mathbb{T}_{q,t}\)
is given by
\begin{equation}\label{eq:qt-act-Z}
  (\lambda,\mu)\cdot (Z,g,X)=(\lambda^{-2}\mu^2Z,g,\lambda^2X), \quad \deg(\lambda)=q,\quad \deg(\mu)=t.
\end{equation}
We assume the grading convention \eqref{eq:Tqt-equiv} holds for the matrix factorizations from \(\MF_{\dr}\).

%In this paper we consider another category of matrix factorizations of the potential $\Wdr$ over the variety \(\scc\), where
%%The new category that is introduced in this paper is the category of matrix factorizations on the \(\scc\) and the potential
%%\(W_{\dr}\):
%\[\scc:=\frg\times G\times \frg,\quad W_{\dr}(X,g,Y):=\Tr(X(Y-\Ad_gY)), \quad \MF_{\dr}:=\MF_{G}(\scc,W_{\dr}).\]

\subsection{Braid realization}
\label{sec:braid-realization}

In our previous papers \cite{OblomkovRozansky16}, \cite{OblomkovRozansky17} we worked with a bigger category, \(\MF\subset \MF_\sim\)
\[\MF_\sim:=\MF(\scX,G\times B^\sim\times B^\sim\times \Tqt,q^0t,W).\]

In particular, we constructed in \cite{OblomkovRozansky16} a monoidal functor, see definition~\ref{def:conv-aff} for the precise formulas for convolution:
\[\Br_n^{aff}\to \MF_\sim,\quad \beta\mapsto \calC_\beta\]

It could be seen from the constructions in the last mentioned paper that the elements
\(\calC_\beta\) are isomorphic to the objects in the smaller category

\begin{proposition}
  For any \(\beta\in \Br_n^{aff}\) the object \(\calC_\beta\) is isomorphic to an object in
  \(\MF\).
\end{proposition}
\begin{proof}
  In \cite{OblomkovRozansky16} we construct an element \(\calC_\beta\) as
  a  convolution \(\calC_{\epsilon_1}^{(i_1)}\star\calC_{\epsilon_2}^{(i_2)}\star\dots\star\calC_{\epsilon_l}^{(i_l)}\) where \(\beta=\sigma_{i_1}^{\epsilon_1}\cdot\sigma_{i_2}^{\epsilon_2}\cdot\dots\cdot\sigma_{i_l}^{\epsilon_l}\) is a presentation of \(\beta\) in terms of elementary generators of the braid group. 

  First, we observe that in the case \(n=2\), \(\calC_{\pm}^{(1)}\) are strongly equivariant matrix factorizations, see exact description in \cite[section 9.3]{OblomkovRozansky16}.  Thus, two-strand generators are tame.

  For a general \(n\) the generators \(\calC_{\pm}^{(i)}\) are obtained by application of
  the induction functors to \(\calC_{\pm}^{(1)}\), see \cite[section 6]{OblomkovRozansky16}. However, the induction functors are compositions of pull-backs along some smooth projections and push-forwards along some regular embeddings. Thus tameness is preserved and \(\calC_{\pm}^{(i)}\in \MF\).

  Finally, we notice the convolution operation, see \eqref{eq:conv-v}, is a composition  of some smooth pull-back and smooth push-forward. Thus the convolution preserves tameness and the proposition follows from proposition~\ref{prop:push-f-star}. 
\end{proof}

The stable version of the above category \(\MF_\sim\) was used in \cite{OblomkovRozansky16} to construct a HOMFLYPT
link homology.

\begin{defn} The stable version  of category \(\MF_\sim\) is defined by:
  \[\MF_\sim^{st}:=\MF((\scX\times V_G)^{st},G\times B^\sim\times B^\sim\times \Tqt,q^0t,W),\]
where \(V_G\) is \(\CC^n\) that is a tautological representation of \(G\) and
\(j_{st}:(\scX\times V_G)^{st}\to (\scX\times V_G)\) is defined by the condition \eqref{eq:stab-X}. 
\end{defn}

In particular, we construct \cite{OblomkovRozansky16} a monoidal functor \(\Br_n\to \MF_\sim^{st}\). The same argument as in the previous proposition shows the elements in the image of the previous
functor are isomorphic to some elements of the stable version of category \(\MF\):

\begin{defn}
  The stable version of category \(\MF\) is defined by:
\[\MF^{st}:=\MF^{tame}((\scX\times V_G)^{st},G\times B^{dg}\times B^{dg}\times \Tqt,q^0t,W),\]
where \(V_G\) as in the previous definition.
\end{defn}

The proposition~\ref{prop:strict} implies the equivalences of categories:
\[\MF\simeq \MF(\gl_n\times \mathrm{T}^*\Fl\times \mathrm{T}^*\Fl,\GL_n\times \Tqt,tq^0,W),\]
\[\MF^{st}\simeq \MF((\gl_n\times \CC^n\times \mathrm{T}^*\Fl\times \mathrm{T}^*\Fl)^{st},\GL_n\times \Tqt,tq^0,W).\]

Since the \(B^2\)-action on \((G\times \frn)^2\) is free we also have equivalences of categories:
\begin{equation}\label{eq:to-strict}
\MF\simeq \MF(\gl_n\times (G\times \frn)^2,\GL_n\times B^2\times \Tqt,tq^0,W),\end{equation}
\begin{equation}\label{eq:to-strict-st}
  \MF^{st}\simeq \MF((\gl_n\times \CC^n\times (G\times \frn)^2)^{st},\GL_n\times B^2\times\Tqt,tq^0,W).\end{equation}

It is shown in \cite{OblomkovRozansky17}, the pull-back \(j^*_{st}\) categorifies
the restriction homomorphism \(\Br_n^{aff}\to \Br_n\). Using proposition~\ref{prop:strict} we derive a version of main result of \cite{OblomkovRozansky17} that uses the global matrix  factorization studied previously in the literature (see section \ref{sec:MFs}):

\begin{corollary}
  There are monoidal functors \(\Phi^{aff}:\Br_n^{aff}\to\MF\), \(\Phi:\Br_n\to \MF^{st}\) such that
  \[j^*_{st}\circ \Phi^{aff}=\Phi\circ \mathrm{for},\]
  where \(\for: \Br_n^{aff}\to \Br_n\) is a forgetful functor.
\end{corollary}

Finally, let us state global matrix factorization version of the construction of the knot homology from \cite{OblomkovRozansky16}. Let \((\mathrm{T}^*)^{\oplus 2}\Fl\) be a double tangent bundle of \(\Fl\). Respectively, \(j_{e}: (\mathrm{T}^*)^{\oplus 2}\Fl\to \mathrm{T}^*\Fl\times \mathrm{T}^*\Fl\) is the diagonal embedding. For a given
\(\calC\in \MF^{st}\) we define:
\[\mathbb{L}(\calC)=j^*(\calC)\in \MF((\gl_n\times \CC^n\times (\mathrm{T}^*)^{\oplus 2}\Fl)^{st},\GL_n\times \Tqt,tq^0,\mu_1-\mu_2)=\MF_{cyc}\]

The potential in the definition of the category \(\MF_{cyc}\) is linear along the cotangent direction and we can apply the Kn\"orrer periodicity to this category and then we obtain the category \(\MF(\widetilde{\FHilb}_{y=0}^{\rfree},B\times \Tqt,tq^0,0)\) that is used in the section~\ref{sec:knot}.

\begin{corollary}
  For any \(\beta\in \Br_n\)  the hyper-cohomology
  \[\mathrm{H}^\bullet(\beta)=\mathbb{H}(\mathbb{L}(\Phi(\beta))\otimes \Lambda^*(V_G^\vee)^G)\]
  is a triply-graded vector space that categorifies the HOMFLYPT polynomial of the closure of \(\beta\).
\end{corollary}

\subsection{Construction of the Chern functor: motivation}
\label{sec:construction}

The discussion in this
section means to provide a geometric intuition behind our construction and the technical
details are postponed till the next subsection.
The Chern character functor \[\CH\colon \MF\rightarrow \MF_{\Dr}\] is a Fourier-Mukai transform going through the category of $(\GBd)$-equivariant matrix factorizations $\MFGBdT(\scZCHd; \WCHxd)$, where
\[
  \scZCHd = \frg\times (G\times\frnD) \times G,\qquad
  \WCHxd(X;g,Y;h) = \Tr (
  % (\Adv{\gp}
  X (\Ad_{gh}Y - \Ad_{h}Y)),
\]
$\frnD=\frn$ is (identified with) the diagonal in the factor $\frn\times\frn$ of the space $\scX$, the group $\Bd=B$ is (identified with) the diagonal  in $B^2$ acting on that space and the action of $\GBd$ on $\scX_\Delta$ is
\[
  (h,\grb)\cdot(X;g_1,Y;g) = (\Adv{h}X;hg_1\grb^{-1},\Adv{\grb}Y; \Adv{h}g)
\]
% 
% category of $(\GBd)$-equivariant coherent sheaves $\DperGBd(\scZCH)$ over the space $\scZCH = \frg\times (G\times\frnD) \times G$ with the $\GBd$ action
% \[
%   (h,\grb)\cdot(X;g',Y;g) =
% \]
% Here $\frnD$ is the diagonal in the factor $\frn\times\frn$ of the space $\scX$, while the group $\Bd$ is the diagonal  in $B^2$ acting on that space.

We define two equivariant maps
\[
  \begin{tikzcd}
    & \scZCHd
    \ar[dl,"\hat{f}_{\Delta}"']
    \ar[dr,"\hat{\pi}_{\dr}"]
    \\
    \scX && \scc
  \end{tikzcd}
\]
explicitly as
\begin{equation}
  \label{eq:pdr}
  \hat{f}_{\Delta}(X; g,Y;h) = (X; gh,Y;h,Y),
  \qquad
  \hat{\pi}_{\dr}(X;g,Y;h) = (\Adv{h}Y,g,X).
\end{equation}

The maps are consistent with the potentials, that is \(\hat{f}_\Delta^*(W)=W_\Delta=\hat{\pi}^*_{\dr}(W_{\dr})\).
Both maps $\hat{f}_\Delta$ and $\hat{\pi}_{\dr}$ have a clear geometric meaning.
The map $\hat{f}_\Delta$ identifies $\scZCHd$ with a subvariety of $\scX$ corresponding to the diagonal $\frnD\subset\frn\times \frn$, which is invariant with respect to the action of the diagonal subgroup $\Bd\subset B^2$. As for $\hat{\pi}_{\dr}$, note that $\scZCHd/\Bd = \frg\times\TsFl\times G$ and the $\Bd$-equivariant map $\hat{\pi}_{\dr}$ descends on the quotient to $\frg\times\TsFl\times G\rightarrow \frg\times \frg\times G$ acting as the Springer resolution.
% (composed with the natural embedding $\frn\hookrightarrow\frg$) on the second factor.

Now the Chern functor and its adjoint co-Chern functor are compositions of a pull-back and a push-forward:
% functor is the composition of a pull-back $\fDs$ and a push-forward $\pDrs$
\[
  \CH = \hat{\pi}_{\dr*}\circ\hat{f}_\Delta^*,\qquad\HC = \hat{f}_{\Delta*}\circ\hat{\pi}^*_{\dr}.
\]

From the computational perspective it is convenient to split $\hat{\pi}_{\dr}$ into a composition of two maps
\[
  \begin{tikzcd}
    \scZCHd
    \ar[r,hook,"\emi"']
    \ar[rr,bend left,"\hat{\pi}_{\dr}"]
    &
    \scZ_{\CH}\ar[r,"\pDr"']
    &
    \scc
  \end{tikzcd}
\]
where $\scZ_{\CH} = \frg\times G\times\frg\times G\times \frb$ and the embedding $\emi$ is generated by the natural inclusion $\frnD\hookrightarrow \frb$ combined with the adjoint action \(\emi(X,g,Y,h)=(\Ad_h Y,g,X,h,Y)\). The map \(\pi_{\dr}\)
is a projection on the first three factors.

Let us also point out that a formula for co-Chern functor in this section requires many clarifications and correction:
it is not immediately clear what is an adjoint functor to the restriction of \(B^2\)-equivariant structure to
the \(B_\Delta\)-equivariant structure. We spell out the omitted technical details in the next subsection where we also introduce maps \(j^0,f_\Delta\) in the
commuting diagram:
\begin{equation}\label{eq:dia-maps}
\begin{tikzcd}
    \scZCHd\ar[d,"\hat{f}_\Delta"]\ar[r,hook]\ar[rr,bend left, hook,"\emi"]&\scZ_{\CH}^0\ar[r,hook,"j^0"]\ar[ld,"f_\Delta"]&\scZ_{\CH}\ar[d,"\pi_{\dr}"]\\
    \scX& &\scc
  \end{tikzcd}
\end{equation}

% while $\tpDr$ is defined by the same formula~\eqref{eq:pdr} as $\pDr$.

% ******************************

\subsection{The Chern functor: details}
\label{sec:chern-funct-deta}

As we explained in the previous subsection, we need two auxiliary spaces in order to define the Chern functor: %for the construction of the Chern functor we need some  auxiliary spaces
\[\scz_{\CH}=\frg\times G\times\frg\times G\times\frn,\quad \scZ_{\CH}=\frg\times G\times\frg\times G\times\frb\]
The action of \(G\times B\) on these spaces is
% The \(G\times B\)-equivariant structure and the invariant  potential on the spaces are
\[(k,b)\cdot (Z,g,X,h,Y)=
  (\Ad_{k}(Z),\Ad_k(g),\Ad_k(X),khb,\Ad_{b^{-1}}(Y)).\]
%and the \(G\times B\)-invariant potential is
%\[W_{\CH}(Z,g,X,h,Y)=\Tr(X(\Ad_{gh}(Y)-\Ad_{h}(Y))).\]
The spaces \(\scC\) and \(\scX\) are endowed with the standard \(G\times B^2\)-equivariant structure, the action of  \(B^2\) on \(\scC\) is trivial.
The following maps
\[\pi_{\Dr}:\mathscr{Z}_{\CH}\rightarrow \mathscr{C},\quad f_\Delta:\scZ_{\CH}^0\rightarrow \mathscr{X},\quad j^0:\scZ^0_{\CH}\to \scZ_{\CH}.\]
\[\pi_{\Dr}(Z,g,X,h,Y)=(Z,g,X),\quad f_\Delta(Z,g,X,h,Y)=(X,gh,Y,h,Y)\]
are  fully equivariant if we restrict the \(B^2\)-equivariant structure on
\(\scX\) to the \(B\)-equivariant structure via the diagonal embedding
\(\Delta:B\rightarrow B^2\). We use notation
\[\mathrm{Res}^{B^2}_{B}: \MF\to\MF(\gl_n\times (G\times \frn)^2,\GL_n\times B_\Delta\times \Tqt,tq^0,W),\] for the corresponding restriction functor.
Note that $j^0$ is the inclusion map.

\begin{remark}
  A restriction  functor between  categories of weakly equivariant matrix factorizations needs to be carefully defined since the correction differentials
  do not naturally restrict to correction differentials of a subgroup (see earlier version of this manuscript for the details of construction). On other hand
  a construction of the restriction
  functors for strictly equivariant matrix factorizations  verbatim repeats the corresponding construction for the complexes of equivariant sheaves.
  Thus we use equivalence \eqref{eq:to-strict} in our construction of \(\mathrm{Res}^{B^2}_B\).
\end{remark}

Using cyclic invariance of trace we derive:
\[\pi_{\Dr}^*(W_{\Dr})-f^*_\Delta(W)=\Tr((\Ad_g^{-1}X-X)(\Ad_hY-Z)).\]
Hence  by discussion in section~\ref{sec:KosMFs} there is a well-defined  kernel of the Fourier-Mukai transform which is a Koszul matrix factorization
\[\mathrm{K}_{\CH}:=[X-\Ad_{g^{-1}}X,\Ad_hY-Z]\in \MF(\scZ_{\CH},\GL_n\times B_\Delta\times \Tqt,tq^0,\pi_{\Dr}^*(W_{\Dr})-f^*_\Delta(W)).\]

\begin{remark} The subvariety \(\calX_\Delta\) can be embedded inside \(\mathscr{Z}_{\CH}\) by setting \(\Ad_hY=Z\), see diagram \refeq{eq:dia-maps}.
  Respectively, the restriction of \(f_\Delta\) on \(\calX_\Delta\) is \(\hat{f}_\Delta\).
\end{remark}
  
\begin{defn}
For \(\calC\in \MF\) we define value of  Chern functor \(\CH(\calC)\in \MF_{\dr}\) by:
% Thus we define
\begin{equation}\label{eq:CH}
  \CH(\calC):=\pi_{\Dr*}(\CE_{\frn}(\mathrm{K}_{\CH}\otimes (j^0_*\circ f^*_\Delta(\mathrm{Res}^{B^2}_B(\calC))))^{T}).
\end{equation}
\end{defn}

\begin{remark}
  To lighten up notations we omit \(\mathrm{Res}^{B^2}_{B}\) in our computations and formulas.
\end{remark}

Here and everywhere below we use notation \((-)^T\) for \(T\)-invariants.
Since by our assumption \eqref{eq:assum} our matrix factorizations are graded
with respect to \(T\)-action, the functor \((-)^T\)  is just an extraction of the \(T\)-degree
\(0\) part of the matrix factorization. %Also see discussion in section \ref{sec:propsMF}.

To define functor \(\HC\) we need to introduce a dual matrix factorization:
\[\mathrm{K}_{\HC}:=\mathrm{K}_{\CH}^\vee\in\MF(\scZ_{\CH},\GL_n\times B_\Delta\times \Tqt,tq^0,-\pi_{\Dr}^*(W_{\Dr})+f^*_\Delta(W)),\]
as well as the induction functor that is the left adjoint to the restriction functor \(\mathrm{Res}^{B^2}_B\). Unfortunately, there is no readily available
left adjoint functor. In the next few paragraphs we construct a substitute for such functor.
%\[\mathrm{Ind_{B}^{B^2}}:  \MF(\gl_n\times (G\times \frn)^2,\GL_n\times B_\Delta\times \Tqt,tq^0,W)\to \MF.\]

% \begin{defn}
% For \(\calD\in \MF_{\Dr}\) we define value of co-Chern functor \(\HC(\calD)\in \MF_{\dr}\) by:
% % Thus we define
%\begin{equation}\label{eq:HC}\HC(\calD):=\mathrm{Ind}^{B^2}_B(\tilde{f}_{\Delta*}\circ j^{0*}(\mathrm{K}_{\HC}\otimes \pi_{\Dr}^*(\calD))).\end{equation}
%\end{defn}

%We also define the co-Chern functor $\HC$ as a left  adjoint functor that goes in the opposite direction:
%\[\HC\colon\MF_{\Dr}\rightarrow \MF, \]

%Thus, the functor $\HC$ is the composition of the left adjoints of all the functors that appear in the formula~\eqref{eq:CH}. When we construct the corresponding
%adjoint functor special care is needed for treating \(B\)-equivariant structure, we spell the details below.
% construction of \(\CH\).

% \[\HC(\calD)=\mathrm{Ind}_{B}^{B^2}\left(f_{\Delta*}(\mathrm{K}_{\CH}\otimes\pi_{\Dr}^*(\calD))\right),\]
% where \(\mathrm{Ind}_{B}^{B^2}\) is the induction functor that turns \(G\times B\)-equivariant
% matrix factorization \(f_{\Delta*}(\mathrm{K}_{\CH}\otimes\pi_{\Dr}^*(\calD))\) into \(G\times B^2\)-equivariant
% matrix factorization with the diagonal embedding \(B\to B^2\).

% This definition has a serious problem since it is unclear what do we mean by induction. So here we propose some corrections that
% fix this problem. Basically, we need to introduce an extra factor in the space used for the Fourier-Mukai transformation.

The product \(\scZ_{\CH}\times B\) has a \(B\times B\)-equivariant structure: for $(p,g)\in \scZ_{\CH}\times B $ we define
\[
  (h_1,h_2)\cdot (p, g) = (h_1\cdot p,h_1^{-1} g h_2)
\]
% 
% with the action of the first factor
% of \(B^2\) on \(\scZ_{\CH}\) is as before and the second factor acting trivially on it. The factor \(B\) has a left action
% of the first factor and the right action of the second factor.
Then the following map is \(B^2\)-equivariant:
\[\tilde{f}_{\Delta}:\scZ_{\CH}^0\times B\rightarrow \scX\times B,\]
\begin{equation*}\tilde{f}_\Delta(Z,g,X,h,Y,b)=(X,gh,Y,hb,\Ad_bY,b).\end{equation*}
The map \(\tilde{f}_\Delta\) is a composition of the projection along the first factor of \(\scZ_{\CH}\) and
the embedding inside \(\scX\times B\). The embedding is defined by the formula
\[\Ad_bY_1=Y_2,\]
so it is a regular embedding. Moreover, we have
\[j^{0*}(\mathrm{K}_{\HC}\otimes \tilde{\pi}_{\Dr}^*(\calD))\in\MF_{G\times B^2}(\scZ_{\CH}\times B,\tilde{f}_\Delta^*(W)),\quad \mathrm{K}_{\HC}=\mathrm{K}_{\CH}^\vee,\]
where \(\tilde{\pi}_{\Dr}:\scZ_{\HC}\times B\rightarrow\scC\) is a natural extension of map
\(\pi_{\Dr}\) by the projection along \(B\). Thus
we have a well-defined matrix factorization \[\tilde{f}_{\Delta *}\circ j^{0*}(\mathrm{K}_{\HC}\otimes \pi_{\Dr}^*(\calD))\in \MF_{G\times B^2}(\scX\times B,\pi_B^*(W)),\] where \(\pi_B\) is the projection along the last factor.

\begin{defn}
For \(\calD\in \MF_{\dr}\) we define value of  co-Chern functor \(\HC(\calD)\in \MF\) by:
% Thus we define
\begin{equation}\label{eq:HC}\HC(\calD):=\pi_{B*}(\tilde{f}_{\Delta*}\circ j^{0*}(\mathrm{K}_{\HC}\otimes \tilde{\pi}_{\Dr}^*(\calD))).\end{equation}
\end{defn}

The functors  used in the construction of the functors \(\HC\) and \(\CH\)
are \(G\)-equivariant. On the other hand by construction of the functor \(\mathcal{E}\mathrm{xt}\) \eqref{eq:ext-def} the two periodic complexes
\begin{equation}\label{eq:exts}
  \mathcal{E}\mathrm{xt}(\calD,\CH(\calC)),\quad \mathcal{E}\mathrm{xt}(\HC(\calD),\calC)\end{equation}
are complexes of \(\CC[\calC]^G\) and \(\CC[\scX]^{G\times B^2}\)-modules, respectively.
Both spaces \(\calC\) and \(\scX\) have \(\frg\) as their first factor. The corresponding projections on \(\frg\)  are  \(G\)-equivariant  and thus the
complexes \eqref{eq:exts} are naturally the complexes of \(\CC[\frg]^G\)-modules.
The complexes \eqref{eq:exts} have the double grading that comes from the \(\mathbb{T}_{q,t}\)-action \eqref{eq:Tqt-equiv}, \eqref{eq:qt-act}, \eqref{eq:qt-act-Z}.

\begin{proposition}\label{prop:adj} The functor \(\HC\) is left adjoint to  \(\CH\), that is, for
  any \(\calC\in\MF\) and \(\calD\in\MF_{\Dr}\) we have
  an isomorphism of doubly-graded two-periodic complexes
  of \(\CC[\frg]^{G}\)-modules:
  \[\mathcal{E}\mathrm{xt}(\calD,\CH(\calC))=\mathcal{E}\mathrm{xt}(\HC(\calD),\calC).\]

\end{proposition}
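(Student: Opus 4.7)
The plan is to verify the adjunction by decomposing both $\CH$ and $\HC$ into chains of elementary operations---pull-backs, push-forwards, tensoring with the Koszul kernel $\mathrm{K}_\CH$, and the Chevalley--Eilenberg reduction of equivariance---and then iteratively apply the standard $\mathcal{E}\mathrm{xt}$ adjunctions at each step. Concretely, writing the two functors as the compositions
\begin{align*}
\CH &= \pi_{\Dr,*}\circ \CE_\frn(-)^T\circ (\mathrm{K}_\CH\otimes -)\circ j^0_*\circ f_\Delta^*, \\
\HC &= \pi_{B,*}\circ \tilde f_{\Delta,*}\circ j^{0,*}\circ (\mathrm{K}_\CH\otimes -)\circ \tilde\pi_\Dr^*,
\end{align*}
one observes that the geometric maps involved---$\pi_\Dr$ and $\pi_B$ (smooth projections), $j^0$, $f_\Delta$, and the embedding component of $\tilde f_\Delta$ (regular embeddings)---all fall under the hypotheses of the proposition preceding the statement.

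With this decomposition in hand, each pair of operations appearing in $\CH$ versus $\HC$ is formally adjoint in the sense $\mathcal{E}\mathrm{xt}(f^*\calA,\calB)=\mathcal{E}\mathrm{xt}(\calA,f_*\calB)$, with the direction depending on whether $f$ is a projection or an embedding. Combined with the tensor--Hom identity $\mathcal{E}\mathrm{xt}(\calA\otimes \mathrm{K}_\CH,\calB)=\mathcal{E}\mathrm{xt}(\calA,\mathrm{K}_\CH^*\otimes \calB)$ and the self-duality (up to a shift) of the Koszul matrix factorization $\mathrm{K}_\CH$ built from the regular sequences $X-\Ad_{g^{-1}}X$ and $\Ad_hY-Z$, I would push one adjunction after another through the composition to arrive at the desired equality; the second identity $\mathcal{E}\mathrm{xt}(\CH(\calC),\calD)=\mathcal{E}\mathrm{xt}(\calC,\HC(\calD))$ then follows from the first by conjugating both arguments.

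The main technical obstacle lies in handling the reduction of equivariance through the diagonal embedding $\Delta\colon B\hookrightarrow B^2$. On the $\CH$ side this reduction is mediated by the Chevalley--Eilenberg averaging $\CE_\frn(-)^T$; on the $\HC$ side its adjoint is realized geometrically by the auxiliary $B$-factor in $\scX\times B$ together with its twisted $B\times B$-equivariant structure and the push-forward $\pi_{B,*}$. Identifying these two constructions as mutually adjoint---in a way that is compatible with the base-change and tensor--Hom identities above, and that correctly tracks the sign and shift conventions for $\CE_\frn$ and for the conjugate matrix factorization---is the only non-formal ingredient, and it requires a careful unwinding of the $B$-equivariant matrix factorization conventions from \cite{OblomkovRozansky16}. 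Once this identification is established, the proposition reduces to a bookkeeping chain of the elementary adjunctions listed above.
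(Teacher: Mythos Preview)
Your proposal is correct and follows essentially the same route as the paper. The paper's proof makes the ``non-formal ingredient'' you flag completely explicit: because the first $B$-factor in $B^2$ acts freely on $\scZ_{\CH}\times B$, the functor $\CE_{\frn^2}(\cdot)^{T^2}$ on the $\HC$ side is literally equal to restriction to $b=1$ followed by $\CE_{\frn}(\cdot)^T$ for the diagonal $B$, and under this restriction $\tilde f_\Delta$ becomes $f_\Delta$; after that, everything is the chain of elementary pull-back/push-forward adjunctions you describe.
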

\begin{proof}
  By expanding the definition of \(\mathcal{E}\mathrm{xt}\) from \eqref{eq:ext-def}
  taking into account the assumption \eqref{eq:assum} we derive that
  the complexes from the main statement are:
  \[\calD^\vee\otimes \pi_{\Dr*}(\CE_{\frn}(\mathrm{K}_{\CH}\otimes j^{0}_*\circ f_{\Delta}^*(\calC))^T) \quad\text{and}\quad
    \CE_{\frn^2}(\pi_{B*}\circ\tilde{f}_{\Delta*}\circ j^{0*}
    (\mathrm{K}_{\HC}\otimes\tilde{\pi}_{\Dr}^*(\calD))^\vee\otimes \calC)^{T^2}.\]
  Thus we have to compare these two objects of \(\DperG(\frg)\).
  We  simplify the second expression.  Observe that the \ChE\ functor commutes with the
  push-forward \eqref{eq:pushf-ce}.

  Since the first factor of \(B\) in \(B^2\) acts freely on \(\scZ_{\CH}\times B\), the functor
  \(\CE_{\frn^2}(\dots)^{T^2}\) is equivalent to the composition of the restriction to the
  unity inside \(B\) and the functor \(\CE_{\frn}(\dots)^T\) with respect to the diagonal action of \(B\). In other words:
  \[\CE_{\frn^2}(\mathrm{K}_{\HC}\otimes\tilde{\pi}_{\Dr}^*(\calF))^{T^2}=\CE_{\frn}(\mathrm{K}_{\HC}\otimes\pi_{\Dr}^*(\calF))^T,\]
  for any \(\calF\in \MF_{\Dr}\).
  When restricted to the unit inside \(B\), the map \(\tilde{f}_{\Delta}\) becomes \(f_{\Delta}\). 
  So now we have to compare
  \[\calD^\vee\otimes \pi_{\Dr*}(\CE_{\frn}(\mathrm{K}_{\CH}\otimes j^{0}_*\circ f_{\Delta}^*(\calC))^T)\quad\text{and}\quad f_{\Delta*}(\CE_{\frn}(
    j^{0*} (\mathrm{K}_{\CH})\otimes j^{0*}\circ {\pi}_{\Dr}^*(\calD^\vee))\otimes \calC)^{T}),\]
  since \(\pi_{\Dr}^*(\calD)^\vee=\pi_{\Dr}^*(\calD^\vee)\) and
  \(\mathrm{K}_{\HC}^\vee=\mathrm{K}_{\CH}.\)
  The pull-back functor \(f^*_{\Delta}\) is adjoint to the corresponding push-forward functors, hence we need to compare homology of complexes
  \[\calD^\vee\otimes \pi_{\Dr*}(\CE_{\frn}(\mathrm{K}_{\HC}\otimes j^{0}_*\circ f_{\Delta}^*(\calC))^T)\quad\text{and}\quad \CE_{\frn}(
    j^{0*}(\mathrm{K}_{\CH})\otimes j^{0*}\circ {\pi}_{\Dr}^*(\calD^\vee)\otimes f^*_{\Delta}(\calC))^{T}.\]

  Since the map \(\pi_{\Dr}\) is \(\frn\)-equivariant and \(\calD^\vee\) has a trivial \(\frn\)-structure we need to match the homology of
  the complexes:
  \[\CE_{\frn}(\pi^*_{\Dr}(\calD^\vee)\otimes \mathrm{K}_{\HC}\otimes j^{0}_*\circ f_{\Delta}^*(\calC))^T)\quad\text{and}\quad \CE_{\frn}(
    j^{0*}(\mathrm{K}_{\CH})\otimes j^{0*}\circ {\pi}_{\Dr}^*(\calD^\vee)\otimes f^*_{\Delta}(\calC))^{T}.\]
  By the projection formula from proposition~\ref{prop:projection}  the first complex is
  \[\CE_{\frn}\bigg( j^0_*\bigg( j^{0*}\circ\pi^*_{\Dr}(\calD^\vee)\otimes j^{0*}(\mathrm{K}_{\HC})\otimes  f_{\Delta}^*(\calC)\bigg)\bigg)^T.\]

  As a final step   we use that \(j^{0}_*\) commutes with the \ChE functor \eqref{eq:pushf-ce}.
  % Finally, we recall the definition \eqref{eq:CE-def} of \(\CE_{\frn}\). Since \(j_*^0\) is a right adjoint functor of \(j^{*0}\)  and
  % the pull-back of the free  \(\CC[\calZ_{\CH}]\)-module  with the trial \(\frn\)-structure is again free and \(\frn\)-trivial conclude  argument.

  % The rest of the proof follows from the proposition~\ref{prop:adj0}.
\end{proof}

\begin{remark}
  Thus \(\CH\) is a right adjoint of \(\HC\). On the other hand  \(\CH\) is not a left adjoint of \(\HC\) because the duality functor does not commute with the functor
  \(\pi_{\Dr*}(\CE_{\frn}(-))\).
\end{remark}
\subsection{Properties}
\label{sec:properties}

Recall that the category \(\MF\) has a convolution algebra structure defined with the auxiliary convolution space
\[\scX_{cnv}:=\frg\times (G\times\frn)\times (G\times\frn)\times (G \times\frn).\]
This space has a unique \(B^3\)-equivariant structure such that the  projection maps
\begin{equation}\label{eq:pi-ij}
  \pi_{ij}(X,g_1,Y_1,g_2,Y_2,g_3,Y_3)=(X,g_i,Y_i,g_j,Y_j),\end{equation}
commute with the action of each $B$.

%In \eqref{eq:conv} the convolution is defined in the usual way as a push-forward of the tensor product of two pull-backs, the push-forward includes taking derived invariants with respect to the action of the middle Borel subgroup $B$:
\begin{defn}\label{def:conv-aff}
Given two elements \(\calF,\calG\in \MF\)  we define their convolution by 
\begin{equation}\label{eq:conv}\calF\star\calG:=\pi_{13*}(\CE_{\frn^{(2)}}(\pi_{12}^*(\calF)\otimes\pi_{23}^*(\calG))^{T^{(2)}}),
\end{equation}
\end{defn}

In the definition  and everywhere below  \(\frn^{(2)}\), \(T^{(2)}\) stand for the Lie algebra of the
unipotent radical and the reductive quotient of the second factor in \(B^3\).

% The convolution provides a monoidal structure on \(\MF\), hence there is a well-defined Drinfeld center \(\mathfrak{Z}(\MF)\) of this category. We expect that the functor \(\HC\)  could be enhanced to the functor
% \(\HC\colon\MF_{\Dr}\rightarrow \mathfrak{Z}(\MF).\) In this paper we prove a weaker statement which is
% just about the convolution algebra structure, see proposition~\ref{prop:central}.

% We expect that the functor is actually an isomorphism:
% \begin{conjecture} The functor \(\HC: \MF_{\Dr}\rightarrow \mathfrak{Z}(\MF)\) is
%   an isomorphism.
% \end{conjecture}

% The Drinfeld center category  \(\MF_n^{\Dr}\)  has a natural monoidal structure, the structure was defined in the introduction but we provide
% more details of the construction below.

% \begin{proposition} For any \(\calD,\calD'\in \MF_n^{\Dr}\) with support
%   at the formal neighborhood  of the identity the we have:
%   \[\KN\big(\Loc(\calD\star\calD')\big)=\KN\big(\Loc(\calD)\big)\otimes
%     \KN\big(\Loc(\calD)).\]

% \end{proposition}

Similarly, we define a monoidal structure on \(\MF_\dr\):
\begin{defn}\label{def:unfr-conv} Given \(\calF,\calG\in \MF_\dr\) we define their convolution by:
\begin{equation}\label{eq:convDr}
  \calF\star\calG:=\pi_{3*}(\pi_1^*(\calF)\otimes\pi_2^*(\calG))
\end{equation}
where the maps \(\pi_i\) are the maps \(\frg\times G^2\times \frg\rightarrow\scC\) are
\[\pi_1(Z,g_1,g_2,X)=(Z,g_1,X),\quad \pi_2(Z,g_1,g_2,X)=(\Ad_{g_1}Z,g_2,X),\]
\[\pi_3(Z,g_1,g_2,X)=(Z,g_2g_1,X).\]
\end{defn}

The functor \(\HC\) respects the convolution product.
\begin{proposition}\label{prop:mono}
  The functor \(\HC\) is monoidal.
  % and for any \(\calC\in\MF_n,\calD\in\MF_n^{\Dr}\)
  % we have
  % \[\CH(\calC\star\HC(\calD))=\CH(\calC)\star\calD.\]
\end{proposition}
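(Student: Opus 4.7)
The plan is to realize both $\HC(\calF\star\calG)$ and $\HC(\calF)\star\HC(\calG)$ as pushforwards of a common Koszul matrix factorization on a single enlarged correspondence, and then invoke base change. First, expanding $\HC(\calF)\star\HC(\calG)$ by substituting \eqref{eq:HC} into the $\MF$-convolution \eqref{eq:conv} produces two independent copies of $\scZ_{\CH}^0\times B$, carrying auxiliary parameters $(h_i,b_i)$ and Koszul kernels $\mathrm{K}_{\CH}(g_i,h_i)$, glued along the middle flag of $\scX_{cnv}$; the \ChE\ factor $\CE_{\frn^{(2)}}(\cdots)^{T^{(2)}}$ together with $\pi_{13*}$ then contracts that middle flag. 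Next, expanding $\HC(\calF\star\calG)$ directly, the Drinfeld convolution \eqref{eq:convDr} is the pushforward along $G\times G\to G$, $(g_1',g_2')\mapsto g_1'g_2'$, and applying $\HC$ afterwards introduces a single pair $(h,b)$ and a single Koszul kernel $\mathrm{K}_{\CH}(g_1'g_2',h)$.

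The geometric heart of the statement is that both correspondences have the same critical locus, cut out by the Koszul relations ``$X$ commutes with the total group factor'' and ``$Z=\Ad_{h_{\mathrm{tot}}}Y$''. Matching $g_1'g_2'\cdot h$ with $g_1h_1\cdot g_2h_2$ via the middle-flag identification $\Adv{b_1}Y_1=Y_2$ turns the tensor product $\mathrm{K}_{\CH}(g_1,h_1)\otimes\mathrm{K}_{\CH}(g_2,h_2)$, after contracting the middle Borel, into $\mathrm{K}_{\CH}(g_1'g_2',h)$. The formal isomorphism is then assembled from a Cartesian square whose horizontal arrow is the multiplication $G\times G\to G$ and whose vertical arrow encodes the diagonal $B\hookrightarrow B^2$, using base change for push/pull of equivariant matrix factorizations together with the compatibility of $\CE_{\frn}$ under these operations.

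The main obstacle will be the bookkeeping of equivariance data: the $\MF$-convolution takes derived invariants under the middle $B$ of the triple flag, while each $\HC$ itself performs a $B$-averaging (hidden in $\pi_{B*}$) mediating between the $B^2$- and $B_{\Delta}$-structures. Showing that these two types of averaging combine into precisely one $B$-averaging on the $\HC(\calF\star\calG)$ side is the delicate point; once that is in place, comparison of potentials is automatic because both sides enforce the same Koszul relations, and functoriality of the resulting isomorphism follows from naturality of base change.
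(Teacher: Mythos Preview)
Your strategy is the same as the paper's: expand both sides, organize them into a base-change rectangle, and match the Koszul kernels. What you have written, though, is a plan rather than a proof, and the two places you flag as ``the main obstacle'' and ``the geometric heart'' are exactly the steps that require work.

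For the equivariance bookkeeping, the paper's mechanism is concrete: the middle $B$ acts freely on $\scZ^0\times B\times\scZ^0\times B$, so $\CE_{\frn^{(2)}}(\cdots)^{T^{(2)}}$ is replaced by restriction to $b'=1$ in the first $B$-factor. This produces an explicit intermediate space $\mathscr{Y}=\frg^3\times G^3\times\frn\times B$ with named maps $\hat{i}_{cnv},\hat{f},i_C,j_{cnv},\pi_Y$, and the argument is then two honest base changes around the resulting rectangle. You assert such a square exists but never build it, so there is nothing to apply base change to. For the kernel contraction, the paper does not match critical loci: it uses the explicit Koszul form of $\mathrm{K}_2$ to rewrite $\hat{i}^*_{cnv}(\mathrm{K}_1\boxtimes\mathrm{K}_2\otimes\cdots)$ as a \emph{push-forward} $i_{C*}(\mathrm{K}\otimes j^*_{cnv}(\calD'\boxtimes\calD''))$ along an explicit closed embedding, and only then does the second base change produce $\HC(\calD'\star\calD'')$. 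Your sentence ``turns the tensor product \ldots\ into $\mathrm{K}_{\CH}(g_1'g_2',h)$'' is a restatement of the goal, not an argument. One further simplification you are missing: the paper first factors $\HC=\HC^0\circ j^{0*}$ through the nilpotent-cone version and proves monoidality for $\HC^0$, which removes the $j^0_*/j^{0*}$ asymmetry from the bookkeeping before any diagram is drawn.
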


Consider the embedding of the nilpotent cone $j^0\colon\caln\rightarrow\frg$.
% Denote by $\caln$ the nilpotent cone and $j^0$ the corresponding embedding map.
% Respectively,
Let us define
\[\MF_{\Dr}^0:=\MF(\scC^0,G\times B^2\times \Tqt,q^0t,W_{\dr}), \quad \scC^0=\frg\times G\times\caln,\]
where \(W_\dr(Z,g,X)=\Tr(X(\Ad_gZ-Z)).\)

By restricting the maps $\pi_i$ in~\eqref{eq:convDr} to the nilpotent locus we obtain a definition of the
monoidal structure on the category $\MF_{\dr}^0$ such that the pull-back functor:
$$j^{0*}:\MF_{\dr}\to\MF_{\dr}^0
$$
is a monoidal functor.
Define an analog of the functor $\HC$ for the nilpotent version of our category:
$$\HC^0:\MF_{\dr}^0\to \MF,\quad \HC^0(\calD):=\pi_{B*}(\tilde{f}_{\Delta*}(\mathrm{K}_{\HC}\otimes \pi_{\Dr}^*(\calD))).$$
Since the maps $j^0$ and $\pi_{\dr}$ commute,
% $j^0\circ\pi_{\dr}=\pi_{\dr}\circ j^0$,
the previous functor is the composition of the new one with the pull-back:
$$\HC=\HC^0\circ j^{0*}.$$

The nilpotent cone \(\mathcal{N}\) is singular and we have to exercise some care when we work with the category \(\MF_{\dr}^0\). Luckily, in the proof below we apply pull-back
functors along smooth or regular map to the elements of \(\MF_{\dr}^0\). 

\begin{proof}[Proof of proposition~\ref{prop:mono}]
  By the previous remark, it is enough to show that the functor $\HC^0$ is monoidal. To simplify notations we use \(\mathrm{K}\) for the Koszul matrix factorization
  \(\mathrm{K}_{\HC}\).
  Our proof relies on the base change along the rectangle of maps:
  \[
    \begin{tikzcd}
      \scX& \scX_{cnv}\arrow[l,"\pi_{13}"]\arrow[r,"i_{cnv}"]&\scX\times\scX\\
      \scZ^0_{\CH}\times B\arrow[u,dotted,"\tilde{f}_{\Delta}\circ\pi_{B}"]\arrow[d,dotted,"\pi_{\Dr}\times 1"]& \mathscr{Y}\arrow[r,dotted,"\hat{i}_{cnv}"]\arrow[u,dotted,"\hat{f}"]\arrow[l,dotted,"\pi_Y"]&\scZ^0_{\CH}\times\scZ^0_{\CH}\times B\arrow[u,"\tilde{f}_{\Delta}\times\tilde{f}_{\Delta}\circ \pi_B"]\arrow[d,"\pi_{\Dr}\times\pi_{\Dr}\times \pi_B"]\\
      \mathscr{C}^0\times  B&\scC^0_{cnv}\times G\times B\arrow[l,dotted,"\pi_3\times \pi_G"]\arrow[r,dotted,"j_{cnv}"]\arrow[u,"i_C",dotted]&\scC^0\times\scC^0
    \end{tikzcd}
  \]
  where
  \[\mathscr{Y}=\frg^3\times G^3\times\frn\times B\] and the dotted maps will be explained below.
  First of all, we explain why pushing along the solid arrows results in \(\HC^0(\calD')\star\HC^0(\calD'')\). Indeed,  expand
  the expression for the convolution:
  \[\pi_{13*}(\CE_{\frn^{(2)}}(i^*_{cnv}(\tilde{f}_{\Delta*}\circ\pi_{B*}(\mathrm{K}\otimes \pi^*_{\Dr}(\calD'))\boxtimes \tilde{f}_{\Delta*}\circ\pi_{B*}(\mathrm{K}\otimes \pi^*_{\Dr}(\calD'')) ))^{T^{(2)}}),\]
  where \(i_{cnv}\colon \scX_{cnv}\rightarrow \scX\times\scX\) is the natural inclusion:
  \[i_{cnv}(X,g_1,Y_1,g_2,Y_2,g_3,Y_3)=(X,g_1,Y_1,g_2,Y_2)\times (X,g_2,Y_2,g_3,Y_3).\]
  Next we notice that \(\frn^{(2)}\) and \(T^{(2)}\) act freely on \(\scZ^0_{\CH}\times B\times \scZ^0_{\CH}\times B\) and since
  this space is a domain for \(\mathrm{K}\otimes\pi_{\Dr}^*(\calD')\boxtimes\mathrm{K}\otimes \pi_{\Dr}^*(\calD'')\),
  we replace the functor \(\CE_{\frn^{(2)}}(\dots)^{T^{(2)}}\) with the restriction functor \((\dots)_{b'=1}\) where
  \(b'\) is an element of the first copy of \(B\) in \(\scZ^0_{\CH}\times B\times \scZ^0_{\CH}\times B\). Thus the convolution is given by the pull-backs push-forwards
  along the solid arrows of the above diagram:

  \[\HC^0(\calD')\star\HC^0(\calD'')=\pi_{13*}(i^*_{cnv}(\tilde{f}_{\Delta*}(\mathrm{K}\otimes\pi_{\Dr}^*(\calD'))\boxtimes
    \tilde{f}_{\Delta*}\circ\pi_{B*}(\mathrm{K}\otimes \pi^*_{\Dr}(\calD''))))).\]

  Define the maps \(\hat{i}_{cnv}\) and \(\hat{f}\):
  \begin{equation}\label{eq:i-cnv}
  \hat{i}_{cnv}(Z_1,Z_2,X,g_1,g_2,g_3,Y,b)=(Z_1,g_1g_2^{-1},X,g_2,Y,Z_2,g_2bg_3^{-1},X,g_3b^{-1},Y),\end{equation}
  \[\hat{f}(Z_1,Z_2,X,g_1,g_2,g_3,Y,b)=(X,g_1,Y,g_2,Y,g_3,\Ad_b Y).\]
  Thus using the base change in the upper left corner of the diagram we obtain
  \[\HC^0(\calD')\star\HC^0(\calD'')=\pi_{13*}\circ\hat{f}_{*}\circ \hat{i}^*_{cnv}(\mathrm{K}_1\boxtimes \mathrm{K}_2\otimes \pi_{\Dr}^*\times\pi_{\Dr}^*\times\pi_B^*(\calD'\boxtimes\calD'')),\]
  where \(\mathrm{K}_i\) is the pull-back of the kernel \(\mathrm{K}\) along the projection on the \(i\)-th copy of
  \(\scZ^0\) in the product \(\scZ^0\times \scZ^0\times B\).

  Next define the maps \(i_C\) and \(j_{cnv}\):
  \[i_C(Z,X,g',g'',Y,b,h)=(Z,\Ad_{g'g''}Z,X,g'h,h,(g'')^{-1}hb,Y,b),\]
  \[j_{cnv}(Z,X,g',g'',Y,b,h)=(X,Z,g',X,\Ad_{g'g''}Z,g'').\]
  Now   because of the explicit formula for the  Koszul matrix factorization \(\mathrm{K}_1\)  and
  the construction of the push-forward \cite{OblomkovRozansky16} we conclude that
  \[\hat{i}^*_{cnv}(\mathrm{K}_1\boxtimes \mathrm{K}_2\otimes \pi_{\Dr}^*\times\pi_{\Dr}^*\times\pi_B^*(\calD'\boxtimes\calD''))=
    i_{C*}(\mathrm{K}\otimes j^*_{cnv}(\calD'\boxtimes \calD'')),\]
  where \(\mathrm{K}=i_{C}^*(\mathrm{K}_2)\). In more details, let use coordinates on \(\mathscr{Y}\)  as in \eqref{eq:i-cnv} to describe
  the tensor product \(\mathrm{K}_1\boxtimes \mathrm{K}_2\):
 \begin{align*}
[\Ad^{-1}_{g_1g_2^{-1}}X-X,\Ad_{g_2}Y-Z_1]&\otimes[\Ad^{-1}_{g_2bg_3^{-1}}X-X,\Ad_{g_3b^{-1}}Y-Z_2]=\\
   [\Ad^{-1}_{g_1}X-\Ad^{-1}_{g_2}X,Y-\Ad_{g_2}^{-1}Z_1]&\otimes [\Ad_{g_2}^{-1}X-\Ad_{g_3b^{-1}}^{-1}X,Y-\Ad_{g_3b^{-1}}Z_2]=\\
[\Ad_{g_1}^{-1}X-\Ad^{-1}_{g_2}X,\Ad_{g_3b^{-1}}Z_2-\Ad_{g_2}^{-1}Z_1]&\otimes [\Ad_{g_3b^{-1}}^{-1}X-\Ad_{g_1}^{-1}X,Y-\Ad_{g_3b^{-1}}Z_2]
\end{align*} 

The first equality is given by a change of basis in each Koszul matrix  factorization.
The second follows from the row operation transformation of the matrix factorizations.

Thus the second term in the last tensor product is \(\mathrm{K}\). The first factor is the matrix factorization that defines push-forward \(i_{C*}\) since the image of the
embedding \(i_{C}\) is defined by \(\Ad_{g_3b^{-1}}Z_2-\Ad_{g_2}^{-1}Z_1\), see iterative construction of push-forward in the proof of proposition~\ref{prop:push-f-star}.

  There is a unique map \(\pi_Y\) that makes our diagram commute. The commutativity of
  the diagram implies  the formula
  % \(\HC^0(\calD')\star\HC^0(\calD'')\) is equal:
  \[
    \HC^0(\calD')\star\HC^0(\calD'') =
    \tilde{f}_{\Delta*}\circ \pi_{B*}\circ \pi_{Y*}\circ i_{C*}(\mathrm{K}\otimes j^*_{cnv}(\calD'\boxtimes \calD'')).\]
  Applying the base change to the lower-left corner in order to obtain another formula
  % \(\HC(\calD')\star\HC(\calD'')\):
  \[
    \HC^0(\calD')\star\HC^0(\calD'') =
    f_{\Delta*}\circ \pi_{B*}(\mathrm{K}\otimes \pi^*_{\Dr}\times 1\circ \pi_{3*}\times \pi_{G*}\circ j_{cnv}^*(\calD'\boxtimes \calD'')),\]
  where \(\mathrm{K}\) is \(\mathrm{K}_{\HC}\). To complete proof we observe that
  \[\pi_{3*}\times \pi_{G*}\circ j_{cnv}^*(\calD'\boxtimes \calD'')=\pi_{B}^*(\calD'\star\calD'').\]
\end{proof}

\begin{remark}\label{rem:EG}
   Techniques of the previous proof can be used  to show the
   "projection" formula, as it was suggested to the authors by the anonymous referee:
   \[\CH(\calC\star \HC(\calD))=\CH(\calC)\star \calD,\]
   where \(\calD\in \MF_{\Dr}\) and \(\calC\in \MF\). A proof will appear in a subsequent publication.
\end{remark}

Let us  recall from \cite{OblomkovRozansky16} that the unit in \((\MF,\star)\) is defined by
\begin{equation}\label{eq:unit-MF}
  \calC_{\parallel}=j_{\parallel*}(\calO),\end{equation}
where
\(j_{\parallel}:\frg\times G\times\scX(B) \rightarrow\frg\times G\times\scX\), \(\scX(B)=\frg\times B\times G\times \frn\)
is defined by
\[j_{\parallel}(Z,g,X,b,k,Y)=(Z,g,X,k,Y,kb,\Ad_bY).\]

\begin{proposition} \label{prop:central}For any \(\calD\in \MF_{\dr}\) and a pair of Koszul matrix factorizations  \(\calc^+,\calc^-\in \MF\) such that \(\calc^+\star\calc^-\sim \calc_\parallel \),
  there is a homotopy equivalence:
  \begin{equation}
    \label{eq:dblpr}
    \calC^+\star \HC(\calD)\star \calc^-\sim\HC(\calD).
  \end{equation}
\end{proposition}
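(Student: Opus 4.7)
My plan is to prove this by a geometric base change argument in the spirit of the proof of Proposition \ref{prop:mono}, exploiting the explicit form of \(\HC(\calD)\) as the push-forward from \(\scZ_{\CH}^0\times B\) along \(\tilde{f}_{\Delta}\circ\pi_B\). The statement says that any invertible Koszul matrix factorization commutes (up to homotopy) with \(\HC(\calD)\) in the convolution algebra. I expect this to be a shadow of the (conjectural) fact that \(\HC\) lands in the Drinfeld center; the hypothesis of invertibility and the Koszul form of \(\calC^{\pm}\) are what allow one to bypass the full central structure and still close the argument.

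First I would rewrite \(\calC^+\star\HC(\calD)\star\calC^-\) on the triple convolution space \(\scX_{\cox}^{(4)}=\frg\times(G\times\frn)^4\), with the middle pair of factors being the locus where \(\HC(\calD)\) is supported. Since \(\tilde{f}_{\Delta}\) is the regular embedding cut out by the condition \(\Ad_b Y_1=Y_2\) over the parameter \(b\in B\), base change lets me replace the two middle copies of \(\TsFl\) by a single copy of \(\scZ_{\CH}^0\times B\). The result is a new diagram
\[
\begin{tikzcd}
\scX_{\cox}^{(4)} & \mathcal{Y}\arrow[l,swap,"\pi"]\arrow[r,"\rho"] & \scX_{\cox}^{(3)}\times B,
\end{tikzcd}
\]
with \(\mathcal{Y}\) the appropriate fibered product, such that the left-hand side of \eqref{eq:dblpr} becomes a push-forward along \(\pi_{14*}\circ\pi_*\) of the tensor product of pull-backs of \(\calC^+\), \(\pi_{\Dr}^*(\calD)\otimes\mathrm{K}_{\CH}\), and \(\calC^-\), with Chevalley-Eilenberg invariants taken along the two intermediate copies of \(\frn\).

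Next I would use the hypothesis \(\calC^+\star\calC^-\sim 1\) in the following form: on \(\mathcal{Y}\), the pull-back of \(\calC^+\boxtimes\calC^-\) (together with the CE functor for the \(B\) separating them) is homotopy equivalent to the pull-back of the unit Koszul factorization from \(\scX_{\cox}\), because the diagonal copy of \(B\) glues the outer two strands through the same group element \(b\) that parametrizes the diagonal locus. After this identification, \(\pi_{14*}\circ\pi_*\) of the remaining data is precisely \(\pi_{B*}\circ\tilde{f}_{\Delta*}\circ j^{0*}(\mathrm{K}_{\CH}\otimes\pi_{\Dr}^*(\calD))=\HC(\calD)\). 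The main obstacle is the bookkeeping: one must verify that the homotopy \(\calC^+\star\calC^-\sim 1\) survives the restriction to the diagonal locus, which requires carefully matching the Koszul generators of \(\calC^\pm\) with the generators of \(\mathrm{K}_{\CH}\), and checking that the CE differential for the intermediate \(\frn\) factors is compatible with the collapse to the diagonal \(B\)-action that is inherent in the construction of \(\HC\).
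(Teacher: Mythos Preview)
Your overall architecture --- set up the triple convolution on a quadruple space, base-change through the embedding \(\tilde{f}_\Delta\), and reduce to a diagonal locus --- is exactly how the paper's proof begins. The gap is in the next step, which you frame as bookkeeping but which is in fact the substance of the argument.

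After base change and restriction to \(b=1\), the factor \(\calC^+\) is pulled back along a map landing on strands \((1,2)\) and \(\calC^-\) along a map landing on strands \((3,4)\). The two middle strands share the nilpotent coordinate \(Y_\bullet\) but carry group coordinates \(gh\) and \(h\) respectively, where \(g\) is the loop variable from \(\scZ_{\CH}\). Thus \(\calC^+\) and \(\calC^-\) are \emph{not} adjacent, and the diagonal \(B\) does not glue them: your sentence ``the diagonal copy of \(B\) glues the outer two strands through the same group element \(b\)'' is the step that fails. The element \(b\) relates strands \(2\) and \(3\), not \(1\) and \(4\), and even at \(b=1\) the discrepancy \(g\) remains. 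The hypothesis \(\calC^+\star\calC^-\sim 1\) cannot be invoked until something forces those middle strands to coincide.

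The missing mechanism is this. The kernel \(\mathrm{K}_{\CH}\) contains the Koszul row \([X-\Ad_g X,\;\cdot\,]\); tensoring with it allows, via an elementary row transformation of Koszul matrix factorizations, the replacement of \(X\) by \(\Ad_g X\) inside \(\pi_{1\bullet}^*(\calC^+)\). Since \(\calC^+\) is strongly \(G\)-equivariant, this is the same as evaluating \(\calC^+\) at \((X,g^{-1}g_1,Y_1,h,Y_\bullet)\): the factor \(g\) has been absorbed into the outer leg and the middle strand is now \((h,Y_\bullet)\). Only at this point do \(\calC^+\) and \(\calC^-\) share a common middle strand, and integrating it out yields \(\calC^+\star\calC^-\sim\calC_\parallel\). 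The residual kernel becomes \(\mathrm{K}'_{\CH}=[X-\Ad_g X,\;\Ad_{g_1}Y_1-Z]\), which, after identifying the remaining space with \(\scZ^0_{\CH}\times B\), matches \(\mathrm{K}_{\CH}\) again and recovers \(\HC(\calD)\). This interplay between the Koszul form of \(\mathrm{K}_{\CH}\) and the \(G\)-equivariance of \(\calC^+\) is precisely the geometric reason \(\HC(\calD)\) is central; it is not a bookkeeping detail.
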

\begin{proof}
  The space \(\hat{\scX}=\frg\times (G\times \frn)^4\times B\) has natural \(B^4\)-equivariant projections
  \(\hat{\pi}_{ij}:\hat{\scX}\to \scX \) as well as an embedding \[\tilde{f}_{\Delta}\colon\scZ_{\CH}\times (G\times \frn)^2\times B\to
    \hat{\scX}\]
  defined as
  \[\tilde{f}_\Delta(Z,g,X,h,Y,g_1,Y_1,g_4,Y_4,b)=(X,g_1,Y_1,gh,Y,hb,\Ad_bY,g_4,Y_4,b).\]
  The double product in the statement is equal to \(\hat{\pi}_{14*}(\CE_{\frn^{(2)}\times\frn^{(3)}}(\calC')^{T^{(2)}\times T^{(3)}})\) where \(\calC'\) is the complex on
  \(\hat{\scX}\):
  \[\calc':=\hat{\pi}_{12}^*(\calc^+)\otimes \hat{\pi}_{34}^*(\calc^-)\otimes \tilde{f}_{\Delta *}\bigl(j^{0*}\circ\pi_{\dr}^*(\calD)\otimes \mathrm{K}_{\CH}\bigr). \]
  Since the push-forward \(\tilde{f}_{\Delta*}\) is adjoint to the pull-back \(\tilde{f}_{\Delta}^*\) with respect to the
  pairing \(\CE_{\frn^{(2)}\times\frn^{(3)}}(\cdot\otimes\cdot)^{T^{(2)}\times T^{(3)}}\), the \lhs of~\eqref{eq:dblpr} has an expression
  \begin{equation}\label{eq:pi14}
    \calC^+\star \HC(\calD)\star \calc^- =
    \pi_{14*}(\CE_{\frn^{(2)}\times\frn^{(3)}}(\calc'')^{T^{(2)}\times T^{(3)}}),
  \end{equation}
  where \(\calc''\) is the matrix factorization
  on \(\scz\times (G\times \frn)^2\times B\):
  \[\calc'':=\pi^*_{1\bullet}(\calc^+)\otimes \pi^*_{\bullet 4}(\calc^-)\otimes j^{0*}\circ\pi^*_{\dr}(\calD)\otimes \mathrm{K}_\CH,\]
  and the maps in the last formula are:
  \[\pi_{1\bullet}(Z,g,X,h,Y_\bullet,g_1,Y_1,g_4,Y_4,b)=(X,g_1,Y_1,gh,Y_\bullet),\]
  \[\pi_{\bullet 4}(Z,g,X,h,Y_\bullet,g_1,Y_1,g_4,Y_4,b)=(X,hb,\Ad_bY_\bullet,g_4,Y_4),\]
  \[\pi_{14}(Z,g,X,h,Y_\bullet,g_1,Y_1,g_4,Y_4,b)=(X,g_1,Y_1,g_4,Y_4).\]
  These maps satisfy relations: \(\hat{\pi}_{14}\circ\tilde{f}_\Delta =\pi_{14}\), \(\hat{\pi}_{12}\circ\tilde{f}_\Delta=\pi_{1\bullet}\),
  \( \hat{\pi}_{34}\circ\tilde{f}_\Delta=\pi_{\bullet 4}\).

  Just as in the proof of proposition~\ref{prop:adj} we observe that the left \(B\)-action on \(B\) is free, hence the functor
  \(\CE_{\frn^{(2)}\times\frn^{(3)}}(\dots)^{T^{(2)}\times T^{(3)}}\) is equivalent to the composition of the restriction to the unity in \(B\) and \(\CE_{\frn}(\dots)^T\) with respect to the diagonal action of \(B\). In other words,~\eqref{eq:pi14} is equal to
  \( \pi_{14*}(\CE_{\frn}(\calc''|_{b=1})^{T})\), here and everywhere below we use the same notations \(\pi_{cd}\) for the restriction of maps
  \(\pi_{cd}\) on the sub locus \(b=1\).

  Since \(\mathrm{K}_{\CH}=[X-\Ad^{-1}_gX, \Ad_hY-Z]\), we can use row transformations of Koszul matrix factorizations in order to replace \(X\) with \(\Ad_gX\) in \(\pi^*_{1\bullet}(\calc^+)\). Now we combine this computation with the fact that the matrix
  factorization  \(\pi^*_{1\bullet}(\calc^+)\) is strongly
  \(G\)-equivariant,
  % \marginnote[$G$-equivalence]{}
  to establish the isomorphism
  % show that \(\calc''|_{b=1}\) is isomorphic to
  \[
    \calc''|_{b=1} \cong
    \calc''':=\pi'^{*}_{1\bullet}(\calc^+)\otimes \pi^*_{\bullet 4}(\calc^-)\otimes j^{0*}\circ\pi^*_{\dr}(\calD)\otimes \mathrm{K}'_\CH,\]
  where \(\mathrm{K}'_{\CH}=[X-\Ad^{-1}_gX,*]\) and the map \(\pi'_{1\bullet}\) is
  \[\pi'_{1\bullet}(Z,g,X,h,Y_\bullet,g_1,Y_1,g_4,Y_4,b)=(X,g^{-1}g_1,Y_1,h,Y_\bullet).\]

  To compute \(*\) in \(\mathrm{K}'_{\CH}\), observe that this matrix factorization has the potential:
  \begin{multline*}
    \Tr(X(\Ad_{g^{-1}g_1}Y_1-\Ad_hY_\bullet))+\Tr(X(\Ad_hY_\bullet-\Ad_{g_4}Y_4))+\Tr(X(Z-\Ad_gZ))\\-\Tr(X(\Ad_{g_1}Y_1-\Ad_{g_4}Y_4))=
    \Tr((\Ad_gX-X)(\Ad_{g_1}Y_1-Z))\\=\Tr((X-\Ad_{g}^{-1}X)(\Ad_{g^{-1}g_1}Y_1-\Ad^{-1}_gZ))
  \end{multline*}
  Thus we conclude that \(*=-\Ad_{g^{-1}g_1}Y_1+\Ad_{g}^{-1}Z\) and in \(\calc'''\) only the first two factors depend on the variable \(h,Y_\bullet\) and have
  a non-trivial $B$-action. Let \(\pi\) be the projection along
  the coordinates \(h,Y_\bullet\). Since \(\pi_{*}(\CE_{\frn}(\pi'^{*}_{1\bullet}(\calc^+)\otimes \pi^*_{\bullet 4}(\calc^-))^T)=\calc^+\star\calc^-\sim
  \calc_{\parallel}\)
  we have the homotopy between the convolution in~\eqref{eq:dblpr}
  % the product in the statement of the proposition
  and \(\tilde{\pi}_{14*}(\tilde{\calc})\), where
  \(\tilde{\calc}\) is the matrix factorization on the space \(\frg\times G\times \scX\):
  \[\tilde{\calc}:=\calc_{\parallel}\otimes  j^{0*}\circ\pi^*_{\dr}(\calD)\otimes \mathrm{K}'_\CH, \]
  and \(\tilde{\pi}_{14}\) is determined by \(\pi_{14}=\tilde{\pi}_{14}\circ \pi.\)

  % Finally,   recall that \(\calC_{\parallel}=j_{\parallel*}(\calO)\), where
  % \(j_{\parallel}:\frg\times G\times\scX(B) \rightarrow\frg\times G\times\scX\), \(\scX(B)=\frg\times B\times G\times \frn\)
  % is defined by
  % \[j_{\parallel}(Z,g,X,b,k,Y)=(Z,g,X,k,Y,kb,\Ad_bY).\]

  Now let us recall the formula \eqref{eq:unit-MF} for \(\calC_\parallel\).  Thus the convolution in~\eqref{eq:dblpr}
  % product in the statement
  is equal to \(\pi_{B*}(j_{\parallel*}(j^{0*}\circ\pi^*_{\dr}(\calD)\otimes j_{\parallel}^*(\mathrm{K}'_{\CH})))\), where \(\pi_B\) is the projection along \(B\).
  By identifying \(\frg\times G\times \scX(B)\) with \(\scZ^0_{\HC}\times B\) we match \(j_{\parallel}\) with \(\tilde{f}_\Delta\).
  Since \(j^*_{\parallel}(\mathrm{K}'_{\CH})=\mathrm{K}_{\CH}\) the statement follows.
\end{proof}

In \cite{OblomkovRozansky17} we constructed an monoidal functor from the affine braid group \[\Phi^{\aff}\colon\Br_{\aff}\to \MF.\]
Under this homomorphism the generators of the group become Koszul matrix factorizations, thus the previous proposition
implies

\begin{corollary} For any \(\beta\in \Br_{\aff}\) and \(\calD\in \MF_{\dr}\) we have:
  \[\Phi^{\aff}(\beta)\star \HC(\calD)\sim \HC(\calD)\star \Phi^{\aff}(\beta).\]
\end{corollary}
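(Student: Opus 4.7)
The plan is to apply Proposition~\ref{prop:central} directly, with $\calC^+$ and $\calC^-$ built from $\beta$ and $\beta^{-1}$ respectively. Concretely, for a generator $\sigma\in\Br_\aff$, the paper asserts that $\Phi^{\aff}(\sigma)$ is a Koszul matrix factorization, and so is $\Phi^{\aff}(\sigma^{-1})$; moreover, since $\Phi^{\aff}$ is an algebra homomorphism, $\Phi^{\aff}(\sigma)\star\Phi^{\aff}(\sigma^{-1})\sim\Phi^{\aff}(1)\sim\xId$, so the hypotheses of Proposition~\ref{prop:central} are met. Therefore
\[
\Phi^{\aff}(\sigma)\star\HC(\calD)\star\Phi^{\aff}(\sigma^{-1})\sim\HC(\calD).
\]
Right-convolving both sides with $\Phi^{\aff}(\sigma)$ and using $\Phi^{\aff}(\sigma^{-1})\star\Phi^{\aff}(\sigma)\sim\xId$ gives the desired commutation for a single generator.

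Next, I would extend this to arbitrary $\beta\in\Br_\aff$ by induction on the length of a braid word representing $\beta$. Suppose $\beta=\beta_1\beta_2$ with the commutation already established for $\beta_1$ and $\beta_2$. Using associativity of $\star$ and the monoidality established in Proposition~\ref{prop:mono} implicitly, we compute
\[
\Phi^{\aff}(\beta)\star\HC(\calD)=\Phi^{\aff}(\beta_1)\star\Phi^{\aff}(\beta_2)\star\HC(\calD)\sim\Phi^{\aff}(\beta_1)\star\HC(\calD)\star\Phi^{\aff}(\beta_2),
\]
and one more application of the inductive hypothesis moves $\HC(\calD)$ past $\Phi^{\aff}(\beta_1)$, finishing the induction.

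The main potential obstacle is making sure that Proposition~\ref{prop:central} can legitimately be invoked generator-by-generator: its statement restricts to \emph{Koszul} matrix factorizations $\calC^\pm$, so one cannot immediately plug in $\Phi^{\aff}(\beta)$ for a general word. The induction above is precisely the mechanism that bypasses this restriction, reducing the general case to individual generators, for which the Koszul property is known. Modulo this bookkeeping, the corollary is an immediate consequence of Proposition~\ref{prop:central} and the braid relation $\beta\beta^{-1}=1$.
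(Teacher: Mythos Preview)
Your proof is correct and follows the same approach the paper intends: the corollary is stated immediately after Proposition~\ref{prop:central} with the remark that generators of $\Br_{\aff}$ go to Koszul matrix factorizations under $\Phi^{\aff}$, so the proposition applies generator-by-generator; your write-up just makes explicit the right-convolution and the induction on word length that the paper leaves implicit. One small remark: the appeal to Proposition~\ref{prop:mono} in your induction step is unnecessary, since all you use there is associativity of $\star$ and the fact that $\Phi^{\aff}$ is a homomorphism, not the monoidality of $\HC$.
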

\begin{proof}
  Let us present \(\beta\) as product of elementary braids \(\beta=\sigma_{i_1}^{\epsilon_1}\cdot\sigma_{i_\ell}^{\epsilon_\ell}\)
  The statement is equivalent to
  \[\Phi^{\aff}(\sigma_{i_1}^{\epsilon_1})\star\dots\star\Phi^{\aff}(\sigma_{i_\ell}^{\epsilon_\ell})\star \HC(\calD)\star \Phi^{\aff}(\sigma_{i_\ell}^{-\epsilon_1})\star\dots\star\Phi^{\aff}(\sigma_{i_1}^{\epsilon_1})\sim \calC_\parallel.\]
  Since the matrix factorization \(\Phi^{\aff}(\sigma_i^\epsilon)\) is a Koszul matrix
  factorization \cite{OblomkovRozansky16}, we apply the previous proposition \(\ell\)
  times to prove the statement.
\end{proof}

\begin{remark}
  The statement of proposition~\ref{prop:central} holds without an assumption that \(\calC^+,\calC^-\) are Koszul matrix factorizations. To show that we need to use
  systematically theorem 2.2 from \cite{khovanovRozansky07c}.
\end{remark}

Since \(W_{\dr}(Z,g,X)=\Tr((g-1)[Zg^{-1},X])\), we have a Koszul matrix factorization
\begin{equation}\label{eq:unit}
  \calO_{\dr}=[g-1,[X,R]]\in\MF_{\Dr},\quad R=Zg^{-1}.
  % \calO:=i_{g=1,*}(\calO)\in \MF_{\dr},\quad i_{g=1}: \frg^2\to \scC,\quad i_{g=1}(Z,X)=(Z,1,X),
\end{equation}
It is a unit (see proposition~\ref{prop:unit}) in the monoidal category \(\MF_{\Dr}\).
The proof of theorem \ref{thm:main} requires a computation of the co-Chern of the matrix factorization \(\calO_{\dr}\).

\begin{proposition} \label{prop:unit} The element \(\calO_{\dr}\in \MF_{\Dr}\) is the convolution unit and
  \[\HC(\calO_{\dr})=\calC_{\parallel}.\]
\end{proposition}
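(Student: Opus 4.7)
The plan is to verify the two assertions separately by direct Koszul-algebra manipulations, exploiting the special form $\calO = [g - 1, [X, Z]]$.

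For the unit property, I would first note that the first column of $\calO$ consists of the matrix entries of $g - 1$, which form a regular sequence cutting out the subvariety $\{g = 1\} \subset \scC$, and that the potential $W_{\dr}(Z, g, X) = \Tr(X(Z - \Ad_g Z))$ vanishes identically on this locus. Consequently $\calO$ is equivalent to the pushforward of the trivial (zero-potential) matrix factorization from $\{g = 1\}$ to $\scC$. Substituting into~\eqref{eq:convDr}, the pullback $\pi_1^*(\calO) = [g_1 - 1, [X, Z]]$ localizes the convolution $\calO \star \calF$ to the sublocus $\{g_1 = 1\} \subset \frg^2 \times G^2$; on this locus $\Ad_{g_1}Z = Z$, so $\pi_2^*(\calF)$ restricts to $\calF$ in the remaining variables, and $\pi_3$ reduces to the identification $g_1 g_2 = g_2$. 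This yields $\calO \star \calF \simeq \calF$, and the symmetric localization to $\{g_2 = 1\}$ gives $\calF \star \calO \simeq \calF$.

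For the identification $\HC(\calO) = \calC_{\parallel}$ I would substitute $\calD = \calO$ into~\eqref{eq:HC} and simplify the combined Koszul matrix factorization
\[
\mathrm{K}_{\CH} \otimes \pi_{\dr}^*(\calO) \;=\; [\,X - \Ad_{g^{-1}}X,\; \Ad_h Y - Z\,] \;\otimes\; [\,g - 1,\; [X, Z]\,]
\]
on $\scZ_{\CH}$. The algebraic key is the identity $X - \Ad_{g^{-1}}X = g^{-1}[g - 1, X]$, which expresses each entry of the first column of $\mathrm{K}_{\CH}$ as an explicit $\CC[\scZ_{\CH}]$-linear combination of the entries of $g - 1$. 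Standard row operations on Koszul matrix factorizations then let me eliminate the first column of $\mathrm{K}_{\CH}$ at the cost of modifying the second column of $[g - 1, [X, Z]]$; the resulting tensor product is quasi-isomorphic to a Koszul complex supported on $\{g = 1,\; Z = \Ad_h Y\} \subset \scZ_{\CH}$, where both potentials vanish identically.

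To finish, I would push the resulting structure sheaf forward through $j^{0}$, $\tilde{f}_{\Delta}$, and $\pi_B$ in turn. On $\{g = 1\}$ the pushforward map simplifies to $\tilde{f}_{\Delta}(Z, 1, X, h, Y, b) = (X, h, Y, hb, \Ad_b Y, b)$, whose image is precisely the subvariety parameterized by the embedding $j_{\parallel}\colon \frg \times G \times \scX(B) \hookrightarrow \frg \times G \times \scX$ used in the definition $\calC_{\parallel} = j_{\parallel *}(\calO)$; after pushing along $\pi_B$, the residual Koszul factor produced by the row reduction matches the $\calO$ appearing in that formula. The main obstacle is the careful bookkeeping of the Koszul row transformations: one must verify that they preserve $G \times B$-equivariance and that the residual Koszul data are exactly $\calO$, with no spurious homological shifts or sign discrepancies from the intermediate functors.
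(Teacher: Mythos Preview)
Your proposal is correct and follows essentially the same route as the paper. The paper also identifies $\pi_{\Dr}^*(\calO)\otimes\mathrm{K}_{\CH}$ with the pushforward $i_{\CH*}(\calO)$ from the locus $\{g=1,\;Z=\Ad_hY\}\subset\scZ_{\CH}\times B$ and then uses the factorization $\tilde{f}_\Delta\circ i_{\CH}=j_\parallel$ (together with $\pi_B$) to obtain $\calC_\parallel$. The only stylistic difference is that where you carry out the Koszul row reduction explicitly via the identity $X-\Ad_{g^{-1}}X=g^{-1}[g-1,X]$, the paper packages this step by invoking the uniqueness of Koszul matrix factorizations attached to a regular sequence (lemma~3.6 of~\cite{OblomkovRozansky16}): since $g-1$ and $Z-\Ad_hY$ form a regular sequence, any Koszul matrix factorization for that ideal is isomorphic to $i_{\CH*}(\calO)$. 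Your explicit row operations are precisely what that uniqueness lemma absorbs. The paper's written proof does not actually address the convolution-unit assertion; your localization-to-$\{g=1\}$ argument for it is the standard one and is fine.
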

\begin{proof}
  Recall that \(\calC_{\parallel}=j_{\parallel*}(\calO)\), where \(j_{\parallel}\colon \scX(B):=\frg\times B\times G\times\frn\rightarrow\scX\)
  is defined by
  \[j_{\parallel}(X,b,h,Y)=(X,h,Y,hb,\Ad_bY).\]
  There is a unique \(B^2\)-equivariant structure on the space \(\scX(B)\) that makes the map
  \(j_\parallel\) equivariant.
  On the other hand, \(\scX(B)\) embeds naturally into \(\scZ_{\HC}\times B\):
  \begin{equation}
    i_{\CH}\colon \scX(B)\to \scZ_{\CH}\times B,\quad   i_{\CH}(X,b,h,Y)=(\Ad_{h}Y, 1,X,h,Y,b).
  \end{equation}
  Moreover, \(\tilde{f}_\Delta\circ i_{\CH}=
  j_\parallel\) and the subvariety \(i_{\CH}(\scX(B))\)
  is defined by  equations
  \begin{equation}\label{eq:XB}
    Z=\Ad_{h}Y,\quad g=1.
  \end{equation}

  To complete the proof, observe that the  Koszul matrix factorization  \(\pi_{\Dr}^*(\calO_{\dr})\otimes \mathrm{K}_{\CH}\) is the
  Koszul matrix factorization  for the ideal with the generators \eqref{eq:XB}. The equations in \eqref{eq:XB} form a regular sequence, hence
   by lemma 3.6 in \cite{OblomkovRozansky16} (also see proposition~\ref{prop:push-f-star}) such Koszul matrix factorization is unique  and it coincides with the push-forward
  \(i_{\CH *}(\calO)\) by  definition.
  % . Similarly, since
  % \(\calO\in \MF_{\Dr}\) is a Koszul matrix factorization for the ideal \(g-1\) we can exclude \(g\) variable from the
  % previous matrix factorization. The potential \(\tilde{f}^*_{\Delta}(W)\) does not depends on either \(Z\) or \(g\) hence
  % after above exclusions we obtain the Koszul complex for equations \eqref{eq:XB} and statement follows.

  % More over the restriction of the composition \(\pi_B\circ \tilde{f}_{\Delta}\) on \(\calX(B)\) is equal to the map
  % \(f_{\parallel}\).

  % To complete our argument we observe that  using Koszul matrix factorization \(\mathrm{K}_{\CH}\)
  % we can exclude the variable \(Z\) (by replacing it with \(\Ad_hY\)) in \(\pi_{\Dr}^*(\calO)\otimes \mathrm{K}_{\CH}\). Similarly, since
  % \(\calO\in \MF_{\Dr}\) is a Koszul matrix factorization for the ideal \(g-1\) we can exclude \(g\) variable from the
  % previous matrix factorization. The potential \(\tilde{f}^*_{\Delta}(W)\) does not depends on either \(Z\) or \(g\) hence
  % after above exclusions we obtain the Koszul complex for equations \eqref{eq:XB} and statement follows.
\end{proof}

\section{Coherent sheaves and Drinfeld center}
\label{sec:coh-shv}

% \subsection{Framed and stable matrix factorizations}
% \label{sec:framed-stable-matrix}

In this section we introduce the framed and stable enhancements of the matrix factorizations from \(\MF\) and \(\MF_{\Dr}\):
\[\MF^{\rfr},\quad\MF^{\rfs},\quad \MF^{\rst},\quad\MF^{\rfr}_{\Dr},
  \quad\MF^{\rfs}_{\Dr}.\]

The stable category \(\MF^{\rst}\) appeared in~\cite{OblomkovRozansky16}, its convolution structure was used to represent the braid group \(\Br_n \) and  construct the link homology. The homology in \cite{OblomkovRozansky16} are triply graded and categorify
the HOMFLYPT polynomial. Recently \cite{OblomkovRozansky20}, the authors have shown that the homology coincide with
the Khovanov-Rozansky  homology \cite{KhovanovRozansky08a}, as it was conjectured in
\cite{GorskyNegutRasmussen16}.
We recall this construction from \cite{OblomkovRozansky16} in section~\ref{sec:knot}..

The framed stable categories \(\MF^{\rfr}\), \(\MF^{\rfr}_{\Dr}\) appear naturally in the context of study of quiver varieties, framing is a standard
feature of Nakajima's quiver varieties. The framed stable category \(\MF^{\rfs}\) plays somewhat technical role in our constructions,
this category is essentially equivalent to the previously defined category \(\MF^{\rst}\), see remark~\ref{rem:singular}. The
framed stable category \(\MF_{\Dr}^{\rfs}\) on other hand is equivalent to the category of two-periodic complexes of coherent sheaves
on \(\Hilb_n(\CC^2)\). Thus this stable framed category is essential for main result of the paper.

% The categories \(\MF^{\rfr}_{\bullet}\) `lie between' the categories \(\MF^{\rst}_{\bullet}\) and \(\MF^{\rfs}_{\bullet}\).
% Both categories \(\MF^{fs}\) and \(\MF^{fs}_{\Dr}\)the key
% property of the framed stable categories is the following theorem that we prove in subsequent publication \cite{?}:
% \begin{theorem}\cite{?} The Drinfeld center \(\mathfrak{Z}(\MF^{fs})\) has a realization:
%   \[\mathfrak{Z}(\MF^{fs})=\MF^{fs}_{\Dr}.\]
% \end{theorem}

% In this paper we show that the functor \(\CH\) could be enhanced to the functor
% \[\CH^{\bullet}: \MF^{\bullet}\rightarrow \MF^{\bullet}_{\Dr}.\]

In more details, category  \(\MF^{\rfs}_{\Dr}\) is related to the dg category \(\Dper (\Hilb_n(\CC^2))\) by the localization functor defined later:
\[ \Loc^{\rfs}\colon \Dper(\Hilb_n(\CC^2))\rightarrow \MF^{\rfs}_{\Dr},\]
where \(\Dper(\Hilb_n(\CC^2))=\MF(\Hilb_n(\CC^2, \Tqt,tq^0))\) is the derived category of two-periodic complexes of \(\Tqt\)-equivariant coherent sheaves.

Moreover, we  show that the functor \(\Loc^{\rfs}\) is an equivalence. Thus we have a well-defined functor
\[\CH^{\rfs}_{\rloc}:=\left(\Loc^{\rfs}\right)^{-1}\circ \CH^{fs}.\]

For various framing construction we use the free rank \(n\)  bundles with various \(G\times B^2\) equivariant structures. Let us fix notations
\[\CC^n=V_{H}\ni v,\quad h\cdot v=hv,\]
\[\CC^n=V_{H}^*\ni v,\quad h\cdot v=vh^{-1},\]
where \(H\) can be \(G, B^{(1)}, B^{(2)}, B^{(3)}\) which are the factors of \(G\times B^3\). Respectively, the other factors of \(G\times B^3\) act trivially on bundle.

We realize the space \(V_H\) as  vector space of columns \(\Hom(\CC,\CC^n)\) and
\(V^*_H\) as vector space of rows \(\Hom(\CC^n,\CC)\). Thus the products
\(v w\), \(w v\), \(v\in V_H\), \(w\in V^*_H\) are well-defined.

\subsection{Framed categories}
\label{sec:framed-categories}

Define framed versions of the spaces \(\scX\) and \(\scC\)
% and the potentials are
\[\scX^{\rfr}:=\scX\times V^*_G\times V_{B^{(1)}}\times V_{B^{(2)}},\quad \scC^{\rfr}:=\scC\times V_G^*\times V_G,\]
and their potentials
\[W^{\rfr}(X,g_1,Y_1,g_2,Y_2,w,v_1,v_2):=W(X,g_1,Y_1,g_2,Y_2)+\Tr(w(g_1v_1-g_2v_2)),\]
\[W^{\rfr}_{\Dr}(X,g,Z,w,v)=W_{\Dr}(X,g,Z)+\Tr(w(v-gv)).\]

\begin{defn}\label{def:fr-cats}
We define the framed categories  as
\[ \MF^{\rfr}:=\MF(\scX^{\rfr},G\times B^2\times\mathbb{T}_{q,t},q^0t,W^{\rfr}),\quad
  \MF^{\rfr}_{\Dr}:=\MF(\calC^{\rfr},G\times\mathbb{T}_{q,t},q^0t,W^{\rfr}_{\dr}).
\]

The framed categories have a natural monoidal structure similar to the unframed case.
% , below we provide the details of the necessary modifications.
The convolution spaces are
\[\scX_{\rcnv}^{\rfr}:=\scX_{\rcnv}\times V^*_G\times V_{B^{(1)}}\times V_{B^{(2)}}\times V_{B^{(3)}},\quad \scC_{\rcnv}:=\scC_{\rcnv}\times V^*_G\times V_G\]
and the extended maps are
\[\pi_{ij}(X,g_1,Y_1,g_2,Y_2,g_3,Y_3,w,v_1,v_2,v_3)=(X,g_i,Y_i,g_j,Y_j,w,v_i,v_j),\]
\[\pi_1(Z,g_1,g_2,X,w,v)=(Z,g_1,X,w,v),\quad \pi_2(Z,g_1,g_2,X,w,v)=(Z,g_2,\Ad_{g_1}X,w,g_1v),\]
\[ \pi_3(Z,X,g_1,g_2,w,v)=(Z,g_1g_2,X,w,v).\]

The convolution product is defined by the formulas \eqref{eq:conv} and \eqref{eq:convDr}.
\end{defn}

\begin{defn}
Let us  define
\[\MF^v:=\MF(\scX^v,G\times B^2\times\mathbb{T}_{q,t},q^0t,W),\quad \scX^v=\scX\times V_G.\]
here we use the natural projection \(\scX^v\to\scX\) to define \(W \) on \(\scX^v\).
\end{defn}

This category plays an auxiliary role but it naturally related to the stable
categories discussed in the next section.

\begin{proposition}
  The category \(\MF^{\rfr}\) is equivalent to the category \(\MF^v\).
\end{proposition}
\begin{proof}
  The equivalence \(i_{\rfr}\colon\MF^v\to\MF^{\rfr}\)
is defined as
\[i_{\rfr}(\calF):=\pi^*_{V^*V}(\calF)\otimes \mathrm{K}^{\rfr},\quad  \pi_{V^*V}: \scX^{\rfr}\to \scX\]
where $\pi_{V^*V}$ is a projection along the factors \(V_{G}^*\) and \(V_{B^{(2)}}\):
\[\pi_{V^*V}(X,g_1,Y_1,g_2,Y_2,w,v_1,v_2)=(X,g_1,Y_1,g_2,Y_2,g_1v_1)\]
while the Koszul matrix factorization on \(\scX^{\rfr}\)  on  is
\[\mathrm{K}^{\rfr}:=[w,g_1v_1-g_2v_2]\in \MF_{G\times B^2}^{\mathbb{T}_{q,t}}(\scX, \Tr(w(g_1v_1-g_2v_2))),\]
% the pull-back along the natural projections \(\pi\) from the framed spaces to unframed ones:
% \[i_{\bullet}^{fr}(\calF):=\pi^*(\calF)\otimes \mathrm{K}^{fr}_\bullet.\]

The inverse functor is defined in terms of embedding:
\[i_{w=0}: \scX\times V_{B^{(1)}}\times V_{B^{(2)}}\to \scX^{\rfr},\quad i_{w=0}(X,g_1,Y_1,g_2,Y_2,v_1,v_2)=(X,g_1,Y_1,g_2,Y_2,v_1,v_2,0)\]
\[i^{-1}_{\rfr}=\pi_{V*}\circ i_{w=0}^*,\]
here \(\pi_{V}\) is the projection along \(V_{B^{(2)}}\).

\end{proof}

The equivalence between \(\MF^{\rfr}\) and \(\MF^v\) is an example of Kn\"orrer periodicity equivalence \ref{prop:Kno}. In particular, one can see that \(i_{\rfr}=i_{w=0,*}\circ \pi^*_{V}\). The category \(\MF^v\)  has a monoidal structure defined by
\begin{equation}\label{eq:conv-v}
  \calF\star\calG:=(\pi_{13}\times 1)_{*}(\CE_{\frn^{(2)}}(\pi^*_{12}\times 1^*(\calF)\otimes \pi^*_{23}\times 1^*(\calG))^{T^{(2)}}),\end{equation}
here \(\pi_{ij}\) are the maps \eqref{eq:pi-ij}.

\begin{proposition}
  The functor \(i_{\rfr}\)  is a  monoidal equivalence.
\end{proposition}
\begin{proof}
  First we observe that with use of row transformation of Koszul matrix factorizations (see for example \cite[section 2.3]{OblomkovRozansky16}) one can obtain the equality of Koszul matrix factorizations
  \begin{multline*}
    \pi_{12}^*(\mathrm{K}^{\rfr})\otimes\pi_{23}^*(\mathrm{K}^{\rfr})=
    \pi_{12}^*([w,g_1v_1-g_2v_2])\otimes \pi^*_{23}([w,g_2v_2-g_3v_3])\\=[0,g_2v_2]\otimes
    [w,g_1v_1-g_3v_3]=[0,g_2v_2]\otimes \pi^*_{13}(\mathrm{K}^{\rfr})\end{multline*}
  Thus \(\pi_{12}^*(i^{\rfr}(\calF))\otimes\pi_{23}^*(i^{\rfr}(\calG)) 
  \) is homotopy equivalent to the restriction of \(\pi_{12}^*(\calF)\otimes \pi_{23}^*(\calG)\otimes \pi_{13}^*(\mathrm{K}^{\rfr})\)
  to the locus \(\scX_{\rcnv}^{\rfr}|_{v_2=0}\). Thus the statement follows from the projection formula for \(\pi_{13}\).
\end{proof}

The monoidal functor from above sends the unit \(\calC_{\parallel}\)
to the unit element \(\calC^{\rfr}_{\parallel}\) in the category \(\MF^{\rfr}\).
Let us spell out the construction of the last unit. Indeed we have
\(\calC_{\parallel}^{\rfr}=j_{\parallel*}^{\rfr}(\calO)\), where \[j_{\parallel}^{\rfr}\colon \scX^{\rfr}(B):=\frg\times B\times G\times\frn\times V_G^*\times V_G\rightarrow\scX\times V_G^*\times V_G\]
is defined by
\[j_{\parallel}(X,b,h,Y,w,v)=(X,h,Y,hb,\Ad_bY,h^{-1}(v),b^{-1}h^{-1}(v)).\]
There is a unique \(B^2\)-equivariant structure on the space \(\scX^{\rfr}(B)\) that makes the map
\(j_\parallel^{\rfr}\) equivariant.

\begin{remark}\label{rem:singular}  It seems to be natural to define \(\MF_{\Dr}^v\) as category of
  matrix factorizations on \(\scC_{\Dr}^v\subset \scC_{\Dr}\times V_G\)  defined by
  the condition \(v=gv\) and the potential defined by restricting
  \(W_{\Dr}\). So one can expect an intimate relation between \(\MF_{\Dr}^v\) and
  \(\MF_{\Dr}^{\rfr}\). However, we choose to avoid working with this category since
  \(\scC_{\Dr}^v\) is singular and we need to use some complicated tools to work with
  this category.
\end{remark}

\subsection{Framed Chern and co-Chern functors}
\label{sec:framed-chern-co}

The framed categories are connected by the framed versions of the functors \(\HC\) and \(\CH\). The framed version of the
space \(\scZ_{\CH}\) and maps \(\pi_{\Dr}\), \(f_\Delta\) are defined as
\[\scZ_{\CH}^{\rfr}:=\scZ_{\CH}\times V^*_G\times V_G, \quad \pi_{\Dr}\colon\scZ_{\CH}^{\rfr}\rightarrow\scC^{\rfr},\quad f_\Delta\colon \scZ_{\CH}^{\rfr}\rightarrow \scX^{\rfr} \]
\[\pi_{\Dr}(Z,g,X,h,Y,w,v)=(Z,g,X,w,v),\] \[f_\Delta(Z,g,X,h,Y,w,v)=(X,gh,Y,h,Y,w,h^{-1}g^{-1}(v),h^{-1}(v)).\]
We define the framed version of the Chern character functor \(\CH^{\rfr}\) by the formula \eqref{eq:CH}.
The framed version \(\HC^{\rfr}\) is defined by~\eqref{eq:HC}, where this time
% To define the framed version \(\HC^{\rfr}\)
% of the functor \(\HC\) we define
\[\tilde{f}_\Delta\colon\scZ^{\rfr}_{\CH}\times B\rightarrow \scX^{\rfr}\times B,\]\[ \tilde{f}_\Delta(Z,g,X,h,Y,w,v,b)=(X,gh,Y,hb,\Ad_bY,h^{-1}(v),b^{-1}h^{-1}g^{-1}(v),w,b).\]
% and the formula \eqref{eq:HC} defines the corresponding functor.

Similarly, we define \(\scZ^{0,\rfr}_{\CH}=\scZ^0_{\CH}\times V_G^*\times V_G\) and \(j^0:\scZ^{0,\rfr}_{\CH}\to \scZ^0_{\CH}\) is a natural embedding.

\begin{proposition}\label{prop:fsHC} Functors from the diagram below
  \begin{equation*}
    \begin{tikzcd}
      \MF^{\rfr}\arrow[rr,bend left,"\CH^{\rfr}"]&&\MF_{\Dr}^{\rfr}\arrow[ll,bend left,"\HC^{\rfr}",pos=0.435]
    \end{tikzcd},
  \end{equation*}
  have the following properties
  \begin{itemize}
  \item The functor \(\CH^{\rfr}\) is a right adjoint of \(\HC^{\rfr}\),
  \item The functor \(\HC^{\rfr}\) is monoidal.
  \item    For any \(\calD\in \MF_{\dr}^{\rfr}\) and a pair of Koszul matrix factorizations  \(\calc^+,\calc^-\in \MF^{\rfr}\) such that \(\calc^+\star\calc^-\sim \calc_\parallel^{\rfr} \),
    there is a homotopy equivalence:
    \begin{equation*}
      \calC^+\star \HC^{\rfr}(\calD)\star \calc^-\sim\HC^{\rfr}(\calD).
    \end{equation*}
  \end{itemize}
\end{proposition}
\begin{proof}
  Proof of the first statement  word by word repeats the proof of proposition \ref{prop:adj}. For second statement we can use the argument from proof of
  proposition~\ref{prop:mono}. We just need to use a framed version of the commuting diagram of maps of spaces:
  \[
    \begin{tikzcd}
      \scX& \scX_{cnv}^{\rfr}\arrow[l,"\pi_{13}"]\arrow[r,"i_{cnv}"]&\scX^{\rfr}\times\scX^{\rfr}\\
      \scZ^{0,\rfr}_{\CH}\times B\arrow[u,dotted,"\tilde{f}_{\Delta}\circ\pi_{B}"]\arrow[d,dotted,"\pi_{\Dr}\times 1"]& \mathscr{Y}^{\rfr}\arrow[r,dotted,"\hat{i}_{cnv}"]\arrow[u,dotted,"\hat{f}"]\arrow[l,dotted,"\pi_Y"]&\scZ^{0,\rfr}_{\CH}\times\scZ^{0,\rfr}_{\CH}\times B\arrow[u,"\tilde{f}_{\Delta}\times\tilde{f}_{\Delta}\circ \pi_B"]\arrow[d,"\pi_{\Dr}\times\pi_{\Dr}\times \pi_B"]\\
      \mathscr{C}^{0,\rfr}\times  B&\scC^{0,\rfr}_{cnv}\times G\times B\arrow[l,dotted,"\pi_3\times \pi_G"]\arrow[r,dotted,"j_{cnv}"]\arrow[u,"i_C",dotted]&\scC^{0,\rfr}\times\scC^{0,\rfr}
    \end{tikzcd}
  \]
  where \(\mathscr{Y}^{\rfr}=\mathscr{Y}\times V^*_G\times V_G\) and framed
  versions of the corresponding maps that start at \(\mathscr{Y}^{\rfr}\) are:
  \[\hat{i}_{cnv}(Z_1,Z_2,X,g_1,g_2,g_3,Y,b,w,v)=(Z_1,g_1g_2^{-1},X,g_2,Y,w,v,Z_2,g_2bg_3^{-1},X,g_3b^{-1},Y,w,v),\]
  \[\hat{f}(Z_1,Z_2,X,g_1,g_2,g_3,Y,b,w,v)=(X,g_1,Y,g_2,Y,g_3,\Ad_b Y,w,g_1^{-1}(v),g_2^{-1}(v),bg_3^{-1}(v)).\]
  Respectively, the framing alteration of the other map is more straight forward:
  \[i_C(Z,X,g',g'',Y,w,v,b,h)=(Z,\Ad_{g'g''}Z,X,g'h,h,(g'')^{-1}hb,Y,b,w,v),\]
  \[j_{cnv}(Z,X,g',g'',Y,w,v,b,h)=(X,Z,g',w,v,X,\Ad_{g'g''}Z,g'',w,v).\]
  With the last modifications of the proof of proposition~\ref{prop:mono} yields
  the second statement of the current  proposition.

  For a proof of the third statement we use the argument of proposition~\ref{prop:central}. We need to work with the framed versions of
  the spaces and maps. In particular, the framed space
  \[\hat{\scX}^{\rfr}=\frg\times (G\times \frn)^4\times B\times V^*_G\times V_{B^{(1)}}\times V_{B^{(2)}}\times V_{B^{(3)}}\times V_{B^{(4)}}\] has natural \(B^4\)-equivariant projections
  \(\hat{\pi}_{ij}:\hat{\scX}^{\rfr}\to \scX^{\rfr} \) as well as an embedding \(\tilde{f}_{\Delta}\colon\scZ_{\CH}^{\rfr}\times (G\times \frn)^2\times B\)
  defined as
  \begin{multline*}\tilde{f}_\Delta(Z,g,X,h,Y,w,v,g_1,Y_1,g_4,Y_4,b)\\=(X,g_1,Y_1,gh,Y,hb,\Ad_bY,g_4,Y_4,b,w,g_1^{-1}(v),h^{-1}g^{-1}(v),b^{-1}h^{-1}(v),g_4(v)).
  \end{multline*}
  The other maps used in the proof are defined by    the  relations: \(\hat{\pi}_{14}\circ\tilde{f}_\Delta =\pi_{14}\), \(\hat{\pi}_{12}\circ\tilde{f}_\Delta=\pi_{1\bullet}\),
  \( \hat{\pi}_{34}\circ\tilde{f}_\Delta=\pi_{\bullet 4}\).

  With these modifications proof of proposition~\ref{prop:central} yields the
  third statement of the current proposition.
\end{proof}

\subsection{Stable and framed stable categories}
\label{sec:stable-framed-stable}

The stable versions of the categories are defined on the stable pieces of the corresponding varieties. In particular, we have
embeddings:
\[j_{\rst}\colon\scX^{\rst}\hookrightarrow \scX\times V_G=\scX^v,\quad 
  \quad j_{\rfs}\colon \scX^{\rfs}\hookrightarrow \scX^{\rfr},\quad \scC^{\rfs}\hookrightarrow \scC^{\rfr}. \]
Denote by \(v\) the coordinate along the vector space \(V\). The open stability condition for \(\scX^{\rst}\) and \(\scX^{\rfs}\) is defined by the following requirements:
\begin{equation}\label{eq:stab-X}
  \CC\langle X,\Ad_{g_1}^{-1}Y_1\rangle\, v=V_G,\quad \CC\langle X,\Ad_{g_1}^{-1}Y_1\rangle\, g^{-1}_1(v_1)=V_G, 
\end{equation}
where \(v_1\) denotes coordinate along the factor \(V_{B^{(1)}}\) in \(\scX^{\rfr}\).
The open stability condition for \(\scC^{\rfs}\) is
\[ \CC\langle Z,X\rangle\, v=V_G.\]

% Respectively, if \(v_1,v_2\) are coordinates along the factor \(V^2\) in \(\scX^{\rfr}\), then the open stability
% condition for \(\scX^{\rfs}\) is
% \[\CC\langle X,\Ad_{g_1}^{-1}Y_1\rangle v_1=V.\]
% We also have the following closed
% \marginnote[?]{?}
% constraint for \(\scC^{\rst}\):
% \(gv=v.\)

The stable piece  \(\scX^{\rst}_{\rcnv}\subset\scX_{\rcnv}\times V\) of the convolution space is an open subset
% of \(\scX_{\rcnv}\times V\) defined by intersection:
which is an intersection
\[\scX^{\rst}_{\rcnv}:=(\pi_{12}\times 1)^{-1}(\scX^{\rst})\cap (\pi_{23}\times 1)^{-1}(\scX^{\rst}).\]

\begin{defn} We define stable categories 
  \[\MF^{\rst}:=\MF(\scX^{\rst},G\times B^2\times \Tqt,q^0t,W),\quad
   \MF^{\rfs}_{\Dr}:=\MF(\scC^{\rfs},G\times \Tqt,q^0t,W^{\rfr}),\] \[\MF^{\rfs}:=\MF(\scX^{\rfs},G\times B^2\times \Tqt,q^0t,W^{\rfr})\]
We define the convolution  product on \(\MF^{\rst}\)  by the  formula  \eqref{eq:conv-v}.
 The monoidal structure is defined on categories
\( \MF^{\rfs}_{\Dr}, \MF^{\rfs}\)
by restricting the corresponding maps to the stable pieces of the convolution spaces.
\end{defn}
% , one can endow the
% categories:
% \[\MF^{\rst}_{\Dr}:=\MF_G(\scC^{\rst},W_{\Dr}),\quad \MF^{\rfs}_{\Dr}:=\MF_G(\scC^{\rfs},W^{\rfr}),\quad \MF^{\rfs}:=\MF_{G\times B^2}(\scX^{\rfs},W^{\rfr})\]
% with convolution algebra structure.
The same argument as in Theorem 1.2.3 of \cite{OblomkovRozansky17} implies the following:
\begin{proposition}
  The pull-back maps \(j_{\rfs}^*\), \(j_{\rst}^*\circ\pi^*_V\), where \(\pi_V\)  is the projection along \(V\), are monoidal functors.
\end{proposition}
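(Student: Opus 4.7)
The plan is to verify the homomorphism property by a direct calculation with the convolution formula, exploiting that $j_{\rfs}^*$ and $j_{\rst}^*\circ\pi_V^*$ are pullbacks along (essentially) open immersions, and that the open stability loci on the convolution spaces are set up precisely so that all the required base-change identities hold. I will handle the framed-stable case $j_{\rfs}^*$ in detail; the argument for $j_{\rst}^*\circ\pi_V^*$ is completely parallel, with the extra pullback along $\pi_V$ being transparent since $\pi_V$ is a smooth projection and commutes with every structural map in sight.

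Step 1. Unwind the convolution. For $\calF,\calG\in\MF^{\rfr}$ one has
\[
\calF\star\calG=(\pi_{13}\times 1)_*\bigl(\CE_{\frn^{(2)}}\bigl(\pi_{12}^*(\calF)\otimes\pi_{23}^*(\calG)\bigr)^{T^{(2)}}\bigr),
\]
and the corresponding formula on $\MF^{\rfs}$ uses the restricted maps $\pi_{ij}^{\rfs}\colon\scX_{\rcnv}^{\rfs}\to\scX^{\rfs}$, where $\scX^{\rfs}_{\rcnv}$ is cut out of $\scX_{\rcnv}\times V$ by the condition $\CC\langle X,\Ad_{g_1}^{-1}Y_1\rangle\,v_1=V$.

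Step 2. Check that stability loci match under the $\pi_{ij}$. The stability condition defining $\scX^{\rfs}$ depends only on $X,g_1,Y_1,v_1$ and is pulled back from the $(1,2)$-factor and the $(1,3)$-factor in the convolution space respectively; it does \emph{not} involve the third flag factor. Thus the preimage under $\pi_{12}$ of the stable locus in $\scX^{\rfs}$ already forces stability of the whole triple through $\pi_{13}$, and both $\pi_{13}$ and $\pi_{23}$ send $\scX_{\rcnv}^{\rfs}$ into $\scX^{\rfs}$. The commutative diagram one wants is the Cartesian square
\[
\begin{tikzcd}
\scX^{\rfs}_{\rcnv}\arrow[r,hook,"j_{\rfs,\rcnv}"]\arrow[d,"\pi_{13}^{\rfs}"']&\scX_{\rcnv}^{\rfr}\arrow[d,"\pi_{13}^{\rfr}"]\\
\scX^{\rfs}\arrow[r,hook,"j_{\rfs}"]&\scX^{\rfr}
\end{tikzcd}
\]
which is a pullback of open immersions precisely because of the remark above.

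Step 3. Apply base change. Pullback along the open immersion $j_{\rfs,\rcnv}$ commutes strictly with tensor product, so $j_{\rfs,\rcnv}^*(\pi_{12}^*\calF\otimes\pi_{23}^*\calG)=\pi_{12}^{\rfs\,*}j_{\rfs}^*\calF\otimes\pi_{23}^{\rfs\,*}j_{\rfs}^*\calG$. The Chevalley--Eilenberg invariants functor $\CE_{\frn^{(2)}}(-)^{T^{(2)}}$ is an operation along the auxiliary Lie algebra/torus directions and commutes with restriction to an open subset of the underlying variety, since it is computed fiberwise. Finally, flat base change along the Cartesian square above (legal because $j_{\rfs}$ is an open immersion) gives $j_{\rfs}^*\circ(\pi_{13}^{\rfr})_*=(\pi_{13}^{\rfs})_*\circ j_{\rfs,\rcnv}^*$. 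Chaining these three identities produces $j_{\rfs}^*(\calF\star\calG)=j_{\rfs}^*\calF\star j_{\rfs}^*\calG$, which is the required algebra homomorphism property.

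The main (and only) technical obstacle is Step~2, the verification that the stability loci are compatible with all three projection maps, since this is what makes the square Cartesian and allows base change to be applied without correction terms. This is the same combinatorial check performed in Theorem~1.2.3 of \cite{OblomkovRozansky17}, and in our setting it reduces to the observation that the stability condition only constrains the first flag slot, so it is automatically preserved when we move to the other slots through convolution. For the map $j_{\rst}^*\circ\pi_V^*$ one first uses that $\pi_V^*$ is exact and intertwines the convolution structures on framed and unframed spaces by the previous proposition, then repeats the Steps~1--3 above with $\scX^{\rst}\subset\scX\times V$ in place of $\scX^{\rfs}\subset\scX^{\rfr}$.
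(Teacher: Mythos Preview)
Your overall approach coincides with the paper's: the paper's entire proof is the sentence ``the same argument as in Theorem~1.2.3 of \cite{OblomkovRozansky17} implies the following,'' and you likewise reduce everything to a base-change identity and defer the key compatibility of stability loci to that reference.

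However, the explicit reasoning you give in Step~2 contains a genuine error. You take $\scX^{\rfs}_{\rcnv}$ to be cut out by the single condition $\CC\langle X,\Ad_{g_1}^{-1}Y_1\rangle\,v_1=V$ and then assert that both $\pi_{13}$ and $\pi_{23}$ carry it into $\scX^{\rfs}$. For $\pi_{23}$ this is false: the stability condition on the target, read through $\pi_{23}$, is $\CC\langle X,\Ad_{g_2}^{-1}Y_2\rangle\,v_2=V$, an independent constraint on slot~2 which your locus does not impose. The paper (following the explicit definition it gives for $\scX^{\rst}_{\rcnv}$) takes the stable convolution space to be the intersection $(\pi_{12})^{-1}(\scX^{\rfs})\cap(\pi_{23})^{-1}(\scX^{\rfs})$, precisely so that $\pi_{23}^{\rfs}$ is defined. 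But with that correct definition your square is no longer Cartesian: the honest fiber product $(\pi_{13}^{\rfr})^{-1}(\scX^{\rfs})$ sees only the slot-1 condition and is strictly larger. Hence the flat base-change in Step~3 does not directly produce the convolution on $\MF^{\rfs}$; one must still argue that shrinking from the fiber product to the smaller intersection does not alter the $\pi_{13}$-pushforward. That is the non-formal content being imported from \cite{OblomkovRozansky17}, and your sketch elides it.
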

\begin{proof}
  The statement that \(j^*_{\rst}\circ\pi_V^*\) is a monoidal functor is shown in
  Theorem 1.2.3 of \cite{OblomkovRozansky16}. The same method yields the statement
  for \(j_{\rfs}^*\), we remind the details below. The key observation is the
  shrinking lemma from \cite{OblomkovRozansky16} (see lemma 12.3 from \cite{OblomkovRozansky16}). The lemma states that  the pull back to an open Zarisky subset is an equivalence of the categories
  of matrix factorizations as long as the open set contains all critical points
  of the potential.

  That is the category of matrix factorizations only depends on the formal neighborhood  of the critical locus.  Thus we
  need to study  the stable locus of the intersection of two critical loci \(Crit(\pi_{12}^*(W^{\rfr}))\) and \(Crit(\pi_{23}^*(W^{\rfr}))\) inside \(\scX^{\rfr}\).

  But on this intersection \(\Ad_{g_1}^{-1}Y=\Ad_{g_2}^{-1}Y=\Ad_{g_3}^{-1}Y\). Hence
  the stable condition can be imposed by requiring \[\mathbb{C}\langle X,\Ad_{g_3}^{-1}Y\rangle v=V_G.\] This open condition is constant along the fibers
  of projection \(\pi_{13}\) hence this condition does not interfere with the convolution and the statement follows.

\end{proof}

\subsection{Framed stable Chern and co-Chern functor}
\label{sec:framed-stabel-chern}

We define the framed stable version of the space \(\scZ_{\CH}\) as an open locus inside \(\scZ_{\CH}\times V\) defined as an intersection:
\[\scZ_{\CH}^{\rfs}:=\pi_{\Dr}^{-1}(\scC^{\rfs})\cap f_{\Delta}^{-1}(\scX^{\rfs}),\]
where the corresponding maps \(f_{\Delta}\), \(\pi_{\Dr}\) are defined by restricting the framed maps to the locus
where the covectors vanish. Thus we can define the functors \(\CH^{\rfs}\) and \(\HC^{\rfs}\) by the formulas \eqref{eq:CH}
and \eqref{eq:HC}. Moreover, the functors constructed in this section fit into the diagram

\begin{equation}\label{eq:HC-CH-dia}
  \begin{tikzcd}
    \MF^{\rfs}\arrow[rrr,bend left,"\CH^{\rfs}"]&\arrow[l,"j_{\rfs}^*"']\MF^{\rfr}\arrow[r,shift left=1,"\CH^{\rfr}"]&\arrow[l,shift left=1,"\HC^{\rfr}"]\MF_{\Dr}^{\rfr}\arrow[r,"j_{\rfs}^*"] &\MF_{\Dr}^{\rfs}\arrow[lll,bend left,"\HC^{\rfs}"]
  \end{tikzcd},
\end{equation}

\begin{proposition}
  Functors \(\CH^{\rfs}\) and \(\HC^{\rfs}\) 
  have the following properties
  \begin{itemize}
  \item The functor \(\CH^{\rfs}\) is a right adjoint of \(\HC^{\rfs}\),
  \item The functor \(\HC^{\rfs}\) is monoidal.
  \item The diagram \eqref{eq:HC-CH-dia} is commutative.
  \end{itemize}  
\end{proposition}
\begin{proof}
  The first and second statements are shown by repeating the argument from the proof
  of \ref{prop:fsHC}. The third statement follow from the base change since
  an open embedding is a smooth map. In more details, let us argue for
  statement about \(\CH^{\bullet}\). Indeed, for an element \(\calC\in
  \MF^{\rfr}\) we have
  \begin{equation*}
    j^*_{\rfs}\circ \CH^{\rfr}(\calC)=j^*_{\rfs}\circ \pi_{\Dr*}(\CE_{\frn}(\mathrm{K}_{\CH}\otimes (j^0_*\circ f^*_\Delta(\calC)))^{T})= \pi_{\Dr*}\Bigg( j^*_{\rfs}\bigg(\CE_{\frn}(\mathrm{K}_{\CH}\otimes (j^0_*\circ f^*_\Delta(\calC)))^{T}\bigg)\Bigg).
  \end{equation*}
  Next we observe that stability condition on the critical locus of the potential on \(\scZ^{\rfr}_{\CH}\) can be imposed
  requiring stability of the image of the push-forward along \(\pi_{\dr}\).
  Hence, the map \(j_{\rfs}\) preserves the critical part of   fibers of the projection 
  \(\pi_{\dr}\). On the other hand the functor
  \(\CE_{\frn}(-)^T\) is constant along the fibers of \(\pi_{\dr}\). Thus last expression is equal:
  \begin{equation*}
    \pi_{\Dr*}\Bigg( \CE_{\frn}\bigg(j^*_{\rfs}\bigg(\mathrm{K}_{\CH}\otimes (j^0_*\circ f^*_\Delta(\calC)\bigg)\bigg)^{T}\Bigg)=\pi_{\Dr*}\Bigg( \CE_{\frn}\bigg(j^*_{\rfs}(\mathrm{K}_{\CH})\otimes (j_{\rfs}^*\circ j^0_*\circ f^*_\Delta(\calC)\bigg)^{T}\Bigg).
  \end{equation*}
  Finally, we use the base change we get
  \[j^*_{\rfs}\circ j_*^0\circ f_\Delta^*=j^0_*\circ j_{\rfs}^*\circ f^*_\Delta=
    j^0_*\circ f^*_\Delta\circ j_{\rfs}^*.\]
  Combining with the previous computation we obtain
  \[j^*_{\rfs}\circ \CH^{\rfr}(\calC)=\CH^{\rfs}\circ j^*_{\rfs}(\calC).\]
  Similar, argument implies
  \[j^*_{\rfs}\circ \HC^{\rfr}(\calD)=\HC^{\rfs}\circ j^*_{\rfs}(\calD), \quad \calD\in \MF^{\rfr}.\]
\end{proof}

\subsection{Linearized  categories}
\label{sec:line-kosz-dual}

% In this section  we define  the linear Koszul duality and establish isomorphisms between the
% linearized categories \(\underline{\MF}^\bullet_{\Dr,\rlin}\), \(\bullet=\emptyset,\rfr,\rfs\) and the corresponding derived category \(\Coh^\bullet\)
% are defined in section~\ref{sec:local-funct}.

In the case of \(\bullet=\rfs\) we show that the linearized category is actually equivalent to the
original category, thus we complete the construction of the functor
\(\CH^{\rfs}_{\rloc}\)  introduced at the beginning of the section.

The variety \(\scC=\frg\times G\times\frg\) has coordinates \((Z,g,X)\).
A coordinate substitution $Z = R g$, \(R\in\frg\)
on our main variety \(\scC\) makes the potential tri-linear:
% to turn potential
% into a multilear potential:
\[\underline{W}_{\Dr}(R,g,X)=\Tr(X[g,R])=W_{\Dr}(Rg,g,X).\]
The framed potential also linearizes:
\[\underline{W}_{\Dr}^{\rfr}(R,g,X,v,w)=\underline{W}_{\Dr}(R,g,X)+\Tr(w(v-gv)).\]
Thus we introduce linearized categories:
\[\underline{\MF}^\bullet_{\Dr}:=\MF_{G}(\underline{\scC}^{\bullet},\underline{W}^{\bullet}_{\rlin}),\]
where \(\scC^\bullet\) is the open set in \(\underline{\scC}^\bullet\) (the group \(G=\GL_n\)
an affine open subset of \(\frg=\gl_n\))
and \(\underline{W}^{\rfs}=\underline{W}^{\rfr}\). That is
\[\underline{\scC}=\frg^3,\quad \underline{\scC}^{\rfr}=\frg^3\times V_G\times V_G^*.\]

Since \(j_{G}\colon\scC^\bullet\hookrightarrow\underline{\scC}^\bullet\) is an open embedding, the pull-back functor \(j_G^*\)
is a localization functor and we denote
\[\mathrm{loc}^\bullet\colon\underline{\MF}^\bullet_\Dr\rightarrow\MF^\bullet_\Dr\]
for this functor.

\begin{remark} The categories \(\underline{\MF}^\bullet_\Dr\) do not have a monoidal structure that respects the localization functor.
\end{remark}

\begin{proposition}\label{prop:loc-iso}
  The functor \(\mathrm{loc}^{\rfs}\) is an equivalence of categories.
\end{proposition}
\begin{proof}
  The compliment to \(j_G(\scC)^{\rfs}\) inside \(\underline{\scC}^{\rfs}\) is defined
  by the equation \(\det(g)=0\).  
  Then according to the lemma 12.3 from \cite{OblomkovRozansky16},
  the pull back to a Zarisky open subset is an equivalence of the categories
  of matrix factorizations as long as the open set contains all critical points
  of the potential.
  Thus it is sufficient to show that the zero locus of \(\det(g)=0\) does not intersect the critical locus of the potential.

  The critical locus \(Z_{crit}\) is given by the system of equations:
  \[[X,R]-wv=0,\quad [g,X]=[g,R]=0,\quad wg=w,\quad gv=v.\]
  These equations appear in the description of the moduli space \(\mathscr{M}^{\rfr}\) is the
  section~\ref{sec:local-funct}. Moreover, the stability condition for
  \(\underline{\scC}^{\rfs}\) is equivalent to the stability condition for \(\mathscr{M}^{\rfs}\). In other words
  \[Z_{crit}\subset \mathscr{M}^{\rfs}\times \frg.\]

  On other hand proposition \ref{prop:Nak} and  the last three equations for \(Z_{crit}\) imply that \(g= Id\).
  %Finally, by stability condition \(v\ne 0\). Hence \(\lambda=1\).
   Slightly rephrasing, we can argue that the stability condition in section~\ref{sec:stable-framed-stable} says that \(v\) is a
cyclic vector for \(X\) and \(Z = Rg.\) If g commutes with \(X\) and \(R\) then it
also commutes with \(Z\), so \(gv = v\) implies that \(g = Id.\)

  \end{proof}

  \begin{remark}
    The stability condition of \(\MF^{\rfs}\) transported by \(\CH^{fs}\) defines a point  \[(Z,g,X,v,w)\in\scC^{\rfr}=\frg\times G\times \frg\times V_G\times V_G^*\] to be stable if \(\CC\langle Z,X\rangle v=V_G\). On the other  hand by the critical locus  argument of the previous proposition the last open condition is equivalent
    to a condition \(\CC\langle R,X\rangle v=V_G\).
  \end{remark}
  
\subsection{Koszul duality}
\label{sec:koszul-duality}

Recall a general setup of the Koszul duality. We are interested in the version which provides a matrix factorization model for the differential graded category
of the derived complete intersection. The Koszul duality in this context was discussed in \cite{ArkhipovKanstrup15a} where the results
of \cite{Orlov12} and \cite{Isik10} are combined. In this note we present slightly more streamlined
construction of the duality functor.

Derived algebraic geometry is explained in many places, here we present a derived algebraic geometry treatment of derived complete intersections
in the most elementary setting sufficient for our needs.
%
%There are many places where the derived algebraic geometry
%is treated but since we only interested in the most elementary aspects of the theory, we
%choose explain the statements that we need in the most elementary setting.

Initial data for an affine derived complete  intersection is a collection of elements
\(f_1,\dots,f_m\in\CC[\mathbf{X}]\). In this section we assume that \(X\) is smooth, affine and of finite dimension. It determines the differential graded algebra
\[\calR=(\CC[\mathbf{X}]\otimes \Lambda^* U,D),\quad D=\sum_{i=1}^mf_i\frac{\partial}{\partial \theta_i},\]
where \(\theta_i\) from a basis of \(U=\CC^m\) and the variables \(\theta_i\) anti-commute \(\theta_i\theta_j=-\theta_j\theta_i\).
Thus algebra \(\calR\) is super-commutative.

%More generally, given a dg algebra \(\calR\) such that \(H^0(\calR)=\calO_Z\) we say that \(\Spec(\calR)\) is a dg
%scheme with underlying scheme \(Z\).
Respectively, we define dg category of coherent sheaves on
\(\Spec(\calR)\) as
\[\Coh(\Spec(\calR))=\frac{\{\text{bounded complexes of finitely generated  $\calR$  dg modules}\}}{\{\text{quasi-isomorphisms}\}}.\]

We forfeit the homological grading in our setting and we assume that the complexes
are two periodic. In our proofs we use usual complexes to simplify
notations. Two periodic folding  turns usual complexes to two-periodic: \[(\oplus_{i\in\ZZ}M_i, D_i:M_i\to M_{i+1}) \Longrightarrow (\mathbb{M}_0\oplus \mathbb{M}_1, \mathbb{D}_i: \mathbb{M}_i\to \mathbb{M}_{i+1}),\]
\[  \mathbb{M}_0=\oplus_{i\in 2\ZZ}M_i,\quad\mathbb{M}_1 \oplus_{i\in 1+2\ZZ}M_i,
\quad \mathbb{D}_0=\sum_{i\in \ZZ}D_{2i},\quad \mathbb{D}_0=\sum_{i\in \ZZ}D_{2i+1}.\]

Thus defined dg category generalizes the derived category of two-periodic complexes
\(\mathrm{D}^{\rper}(\mathbf{Z})\), in the following sense. If the intersection \(\mathbf{Z}\)
of \(f_i=0\), \(i=1,\dots,m\) is transverse and \(\mathbf{Z}\) is smooth then ( by applying two-periodic folding to 
\cite{Raskin14}):
\[\Coh(\Spec(\calR))=\mathrm{D}^{\rper}(\mathbf{Z}).\]

Suppose that the ambient space \(\mathbf{X}\) carries an action of a group \(G\) and
that the ideal \(f_1,\dots,f_m\) is preserved by the action:
\(g(f_i)=\sum_{j}a(g)_{ij}f_j\), \(a_{ij}\in \CC[X]\), \(g\in G\). The matrix \((a(g)^{-1})^*\) defines an action of  \(g \in G\) on \(U^*\otimes\CC[\mathbf{X}]\) and the  corresponding  \(G\)-action preserves the differential \(D\). In this setting
we have a well-defined \(G\)-equivariant dg category
\(\Coh(\Spec(\calR),G)\).

Consider a potential on \(\mathbf{X}\times U\):
\begin{equation}\label{eq:Potential}
  W=\sum_{i=1}^m f_i(x)z_i,
\end{equation}
where \(z_i\) is a basis of \(U^*\) dual to the basis \(\theta_i\).  For the Koszul matrix factorization:
\[
  \MF(\mathbf{X}\times U,W)\ni\mathbb{B}=(\calR\otimes \CC[U], D_{\mathbb{B}}),\quad D_{\mathbb{B}}=\sum_{i=1}^m z_i\theta_i+f_i\frac{\partial}{\partial \theta_i}.\]%+f_i\frac{\partial}{\partial \theta_i}\]
%is an element of \(\MF(X\times U,W)\).
We have \(D_{\mathbb{B}}^2=W\) because of \eqref{eq:Potential}.
For a \((M,D_M)\)  dg module over \(\calR\),  the tensor product
\[\mathrm{KSZ}_U(M):=M\otimes_{\CC[X]\otimes\Lambda^*(U)} \mathbb{B}\] 
is an object of \(\MF(\mathbf{X}\times U,W)\) with the differential \(D=D_M\otimes 1+ 1\otimes D_{\mathbb{B}}\). In the last formula we use usual tensor product, not the derived one,
because \(\mathbb{B}\) is free over \(\CC[\mathbf{X}]\otimes\Lambda^*(U)\).
The map \(\mathrm{KSZ}_U\) extends to a functor between triangulated categories:
\[\mathrm{KSZ}_U\colon \Coh(\Spec(\calR))\rightarrow \MF(\mathbf{X}\times U,W).\]

The functor in the other direction is based on the dual matrix factorization:
\[\MF(\mathbf{X}\times U,-W)\ni B^*=(\calR\otimes \CC[U],D_B^*),\quad D^*_{\mathbb{B}}=-\sum_{i=1}^m z_i\theta_i+f_i\frac{\partial}{\partial \theta_i},\]
\[\mathrm{KSZ}_U^*\colon\MF(\mathbf{X}\times U,W)\rightarrow \Coh(\Spec(\calR)),\quad \mathrm{KSZ}^*_U(\calF):=
  \Hom_{\calR}(\calF\otimes_{\CC[\mathbf{X}\times U]}\mathrm{B}^*,\calR).\]

\begin{theorem}\label{thm:KSZ} Suppose \(X\) is smooth and quasi-affine.
  Then  the functors \(\mathrm{KSZ}_U\) and \(\mathrm{KSZ}_U^*\) are mutually inverse:
  \[\mathrm{KSZ}_U\circ\mathrm{KSZ}_U^*\simeq \mathbf{D},\quad \mathrm{KSZ}_U^*\circ\mathrm{KSZ}_U\simeq \mathbf{D},\]
  where \(\mathbf{D}\) are respective duality functors. In particular, for a dg \(\calR\)-module \(M\) we define \(\mathbf{D}(M)=\Hom_\calR(M^\bullet,\calR)\) where
  \(M^\bullet\) is a free resolution. 
\end{theorem}
\begin{proof}
   Let us indicate the main observation behind the argument.
  To simplify notations we assume that
  \(m=1\) and \(\mathbf{X}\) is affine. Respectively, we set \(\CC[\mathbf{X}][z]\) to be a ring of regular functions on
  \(\mathbf{X}\times \CC_z\) and the underlying super-commutative ring of \(\calR\) is \(\CC[\mathbf{X}][\theta]\).
  First we prove that \(\mathrm{KSZ}^*_U\circ \mathrm{KSZ}_U= \mathbf{D}\).
%Since every module has a finite resoltution in terms of free \(\calR\)-modules it is enough to
%  show that statement holds  for \(\calR\).
%  The composition of the functors applied to \(\calR \) is equal to \(\calR\)-module
  %Suppose we have an dg \(\calR\)-module \(M\), it
  A dg \(\calR\)-module $M$
  has a free resolution \(M^\bullet\) since
  \(X\) is an affine space. Thus the composition \(\mathrm{KSZ}^*_U\circ \mathrm{KSZ}_U(M)\) is
  equal to
  \begin{equation}
    \label{eq:BB1}
    \Hom_\calR(B^*\otimes_{\CC[\mathbf{X}][z]}B,\calR)\otimes_{\calR}\check{M}^\bullet=
    (\Hom_{\calR}(\CC[\mathbf{X}][z,\theta^{(1)},\theta^{(2)}],\calR),\check{D}_{\mathbb{BB}})\otimes_{\calR} \check{M}^\bullet,\end{equation}
   where \(\check{M}^\bullet=\mathbf{D}(M^\bullet)\) and the differential \(\check{D}_{\mathbb{BB}}\) is dual to the differential
  \[D_{\mathbb{BB}}=z\theta_-+f(x)\frac{\partial}{\partial\theta_+},\quad \theta_\pm=\theta^{(1)}\pm\theta^{(2)} \]
  acting on \(\mathbb{B}\otimes_{\CC[X][z]}\mathbb{B}=\CC[X][z,\theta^{(1)},\theta^{(2)}]\).

  Since \(\theta_-\) and \(\frac{\partial}{\partial \theta_+}\) commute, the differential \(D_{\mathbb{BB}}\) is a sum of two differentials: \(D_{\mathbb{BB}}^{(1)}=z\theta_-\) and \(D_{\mathbb{BB}}^{(2)}=f(x)\frac{\partial}{\partial \theta_+}\). Next we analyze the complex for the differentials that is dual to
  \(D_{\mathbb{BB}}^{(1)}\).
 Indeed,  there is a homotopy equivalence of the
  complexes of \(\CC[\theta_-]\)-modules
  \begin{equation}\label{eq:ztheta}
    (\CC[z,\theta_-],z\theta_-)\sim [
 \CC[\theta_-]\xrightarrow{\theta_-}\CC[\theta_-]\xrightarrow{\theta_-}\CC[\theta]\xrightarrow{\theta_-}\dots].\end{equation}
The duality functor \(\Hom_{\CC[\theta_-]}(\bullet,\CC[\theta_-])\) inverts the arrow, and we obtain a resolution of
the simple \(\CC[\theta_-]\)-module:
\begin{equation}\label{eq:contract}
  [\CC[\theta_-]\xleftarrow{\theta_-}\CC[\theta_-]\xleftarrow{\theta_-}\CC[\theta_-]\xleftarrow{\theta_-}\dots]\sim \CC.
\end{equation}
Hence the complex \eqref{eq:BB1} is quasi-isomorphic to \(\check{M}^\bullet\) and the statement follows.

The analysis of the composition in other direction \(\mathrm{KSZ}_U\circ\mathrm{KSZ}^*_U\) is even easier, because it is reduced to the
%is even easier to analyze since it boils to
study of the dg module
\begin{equation}\label{eq:D'}
  (\mathbb{B}\otimes_{\CC[X][\theta]}\mathbb{B}^*,D'_{\mathbb{BB}}),\quad D'_{\mathbb{BB}}=\theta(z^{(1)}+z^{(2)}), \end{equation}
which is quasi-isomorphic to the dg module \(\CC[X][z^{(1)},z^{(2)}]/(z^{(1)}+z^{(2)})\) with the trivial differential. Hence the statement follows.

% The functors \(\mathrm{KSZ}_U\) and \(\mathrm{KSZ}_U^*\) are defined in terms of tensor product functor and \(\Hom\) functor. Thus

The action on the
of morphisms between the objects is defined in a standard way:
\begin{equation}\label{eq:KSZ-mor}
  \mathrm{KSZ}_U(\phi)=\phi\otimes 1,\quad \mathrm{KSZ}_U^*(\Psi)=\Psi^\vee\otimes 1,\end{equation}
where \(\Psi^\vee\in \Hom(\calG^\vee,\calF^\vee)\) is dual to \(\Psi\in \Hom(\calF,\calF^\vee)\).

%The previous construction from the proof are compatible with the
%tensor product structure of the action of \(\mathrm{KSZ}_U\), \(\mathrm{KSZ}_U^*\) on the space of morphisms.
The homotopy relation \eqref{eq:contract} only affects the second tensor factor in the
LHS of the formulas \eqref{eq:KSZ-mor}.
Thus the construction applied to the space of morphism
yields
\[\mathrm{KSZ}_U\circ\mathrm{KSZ}_U^*(\Psi)=\Psi^\vee,\quad \mathrm{KSZ}_U^*\circ\mathrm{KSZ}_U(\phi)=\phi^\vee.\]
Thus  \(\mathrm{KSZ}_U\) and \(\mathrm{KSZ}_U^*\) are fully faithful functors that is bijective on isomorphism classes of objects thus these functors are equivalences.

So far we discussed the case \(m=1\) and \(X\) affine. In the case \(m>1\) we need work with the tensor product of complexes \(K_i=(\CC[z_i,\theta_{i,-}],z_i\theta_{i,-})\), \(i=1,\dots,m\),
\(\theta_{i,-}=\theta_i^{(1)}-\theta_i^{(1)}\) instead of one complex \eqref{eq:ztheta}. For each \(K_i\) we have the contraction~\eqref{eq:contract} of the dual complex
\(K_i^\vee \) and the rest of argument goes through. Similarly, for \(m>1\) we need to modify \eqref{eq:D'} by working with \(D'_{BB}=\sum_i\theta_i(z_i^{(1)}+z_i^{(2)})\).

The case of \(\mathbf{X}\) being is quasi-affine is completely parallel to the affine case since we can use apply the affine case proof to each term of the Cech resolution, see
section~\ref{sec:quasi}.
\end{proof}

If we have a group \(G\) acting on \(\mathbf{X}\) and acting linearly on \(U\) and on \(W\in \CC[\mathbf{X}\times U]^G_\chi\), \(\chi\) is a character of \(G\) then the above functors extend to the isomorphism between the equivariant
version of the categories:
\begin{equation}\label{eq:KoszG}
  \Coh(\Spec(\calR),G,\chi)\simeq \MF(\mathbf{X}\times U,G,\chi,W).
  \end{equation}

In the last formula we work with the strongly equivariant matrix factorizations, thus all differentials and homotopies in the (curved) complexes are  \(G\)-equivariant.
Hence no technical difficulties related to Chevalley-Eilenberg correction differential arise and  argument from the  above proves  \eqref{eq:KoszG} after a slight adjustment of  notations. Let us also point out that below we only need the Koszul duality for the strongly equivariant matrix factorizations since
the elements of  \(\MF_{\Dr}^\bullet\) are strongly equivariant with respect to \(G=\GL(n)\) according to our main assumption \eqref{eq:assum}.

\begin{remark}\label{rem:KSZ} Suppose that the scheme \(\mathbf{Z}\subset \mathbf{X}\) defined by \(f_1,\dots,f_m\) is a smooth complete intersection.
   Then
  the functor \(\mathrm{KSZ}_U\) has a description in terms of push-forward functor. In more details,
  \[\mathrm{KSZ}_U=i_{f=0,*}\circ \pi_U^*: D^{\rper}(\mathbf{Z})=\MF(\mathbf{Z},0)\to \MF(\mathbf{X}\times U,W)\] where \(i_f\) and
  \(\pi_U\) are natural inclusion and projection
   \[i_{f=0}: \mathbf{Z}\times U\to \mathbf{X}\times U,\quad \pi_U: \mathbf{Z}\times U\to \mathbf{Z}.\]
   \end{remark}

\subsection{Koszul functors}
\label{sec:local-funct}

In our setting \(U=\frg\), while \(\mathbf{Z}^\bullet=\mathscr{M}^\bullet\subset \mathbf{X}^\bullet\) is a version of the commuting variety:
\begin{equation}\label{eq:CV}
\mathscr{M}\subset \frg^2=\mathbf{X},
\quad \mathscr{M}^{\rfr}, \mathscr{M}^{\rfs}\subset \frg^2\times V_G\times V^*_G=\mathbf{X}^\rfr. 
\end{equation}
The closed conditions describing the spaces \(\mathscr{M}\) are
\begin{equation}\label{eq:unfr-eqs}
[Z,X]=0, \quad Z,X\in \frg,\end{equation}
for the spaces \(\mathscr{M}^{\rfr},\mathscr{M}^{\rfs}\) are
\begin{equation}\label{eq:framed-eqs}
[Z,X]-vw=0, \quad Z,X\in \frg,\quad v\in V_G, w\in V^*_G,\end{equation}
the open condition for the stable spaces is:
\(\CC[Z,X]v=V_G.\)

Let us also recall a construction of \(\Hilb_n\). The variety \(\Hilb_n\)
is \(G\) a quotient of \(\widetilde{\Hilb}_n\subset \frg^2\times V_G\) where
\[(Z,X,v)\in\widetilde{\Hilb}_n\mbox{ iff } \CC[Z,X]v=V_G.\]

The relation of \(\Hilb_n\) with moduli space \(\mathscr{M}^{\rfs}\) is explained
in the book by Nakajima\cite{Nakajima99}. First we observe that there is
a natural \(G\)-equivariant embedding
\[\widetilde{\Hilb}\to \mathscr{M}^{\rfr},\quad (Z,X,v)\mapsto (Z,X,v,0).\]
 with \(G\)-action on \(\mathscr{M}^{\rfr}\),
\(g\cdot (Z,X,v,w)=(\Ad_gZ,\Ad_gX,gv,wg^{-1}).\)

\begin{proposition}\label{prop:Nak}\cite[Proposition 2.8, Theorem 1.9]{Nakajima99} For any \(n\) we have
  \begin{enumerate}
  \item \(\mathscr{M}^{\rfs}=\widetilde{\Hilb}_n\) as subvariety of \(\mathscr{M}^{\rfr}\).
    \item  For any \((Z,X,v)\in \widetilde{\Hilb}_n\) its stabilizer is \( \Id\in G\).
  \end{enumerate}
\end{proposition}

We define the dg algebra \(\calR^\bullet\) as the algebra \(\CC[X^\bullet]\otimes\Lambda^* \frg\) with the differential
\begin{equation}\label{eq:DG}
  D^\bullet=\sum_{ij}f_{ij}^\bullet\frac{\partial}{\partial \theta_{ij}},
  \end{equation}
where \(\theta_{ij}\) is a basis of matrix units in \(\frg\) and \(f^\bullet_{ij}\) is the \(ij\)-entry of the corresponding
version of the defining equations \eqref{eq:unfr-eqs}, \eqref{eq:framed-eqs} for the embedding \eqref{eq:CV}.

To simplify our notations we denote
\[\Coh^\bullet:=\Coh_{G}(\Spec{\calR^\bullet}).\]

Finally, let us notice that in the case \(\bullet=\rfs\) by proposition~\ref{prop:Nak} the equations \(f^{\rfs}_{ij}\) define a smooth complete intersection.
Hence, we have an equivalence of categories
\begin{equation}\label{eq:Cohfs-iso}
     \MF^{\rfs}_{\Dr}\simeq \underline{\MF}^{\rfs}_{\Dr}\simeq\Coh^{\rfs}\simeq \mathrm{D}^{\rper}(\Hilb_n(\CC^2),{\mathbb{T}_{q,t}},q^0t).
\end{equation}

Indeed, since the potential \(\underline{W}^\bullet\) is linear as a function of \(g\in\frg\) and the scaling torus \(\mathbb{T}_{q,t}\) does not act on
\(g\), we obtain a pair of mutually inverse functors:
\[
  \begin{tikzcd}
    \underline{\MF}^\bullet_{\Dr}\arrow[rr,bend left,"\mathrm{KSZ}_{\frg}^*"]&&\Coh^\bullet\arrow[ll,bend left,"\mathrm{KSZ}_\frg"]
  \end{tikzcd}
\]

The tensor product of dg modules gives categories \(\Coh^\bullet\) monoidal structure. It appears that there is no natural monoidal
structure on the linearized category \(\underline{\MF}^{\bullet}_{\Dr}\). Thus, it is not meaningful to discuss monomial structure of
the functor \(\mathrm{KSZ}_{\frg}\).  However, in the case \(\bullet=\rfs\) we can use a sequence of equivalences to define a bivariant operation  structure
\begin{equation} \label{eq:conv-Hilb}
  \star:   \mathrm{D}^{\rper}(\Hilb_n(\CC^2),{\mathbb{T}_{q,t}},q^0t)\times \mathrm{D}^{\rper}(\Hilb_n(\CC^2),{\mathbb{T}_{q,t}},q^0t)\to \mathrm{D}^{\rper}(\Hilb_n(\CC^2),{\mathbb{T}_{q,t}},q^0t)
\end{equation}
by intertwining the monoidal structure \(\star\) on      \(\MF^{\rfs}_{\Dr}\), , see definition~\ref{def:fr-cats}, by the isomorphism \eqref{eq:Cohfs-iso}.
Moreover, the composition \(\loc^\bullet\circ\mathrm{KSZ}_{\frg}\), that is used in \eqref{eq:Cohfs-iso}, has the following monoidal properties. Below we use that  the categories \(\MF^{\bullet}_{\dr}\) have monoidal structure \(\star\), as described in definition~\ref{def:fr-cats}.
%the monoidal structure

\begin{proposition}\label{prop:compare-star}
  Let \(\tilde{M'},\tilde{M''}\in \MF(\mathbf{X}^\bullet,G,q^0t,0)\), \(\bullet=\emptyset,\rfr,\rfs\) as in \eqref{eq:CV}.
Respectively,   \(M',M''\in \mathrm{Coh}^\bullet\) are corresponding complexes of
  dg-modules.
%  Then there is an universal element
 % \[\mathcal{T}^\bullet\in \MF(\mathbf{X}^\bullet\times G,q^0t,0 )\]
 % such that
Then  there is an isomorphism
  \[\loc^\bullet\circ \mathrm{KSZ}_{\frg}(M'\otimes M'')\simeq\loc^\bullet\circ \mathrm{KSZ}_{\frg}(M')\star \loc^\bullet\circ \mathrm{KSZ}_{\frg}(M'')\]
\end{proposition}
\begin{proof}
  In the proof we examine the case \(\bullet=\emptyset\), the other case are analogous.
  The Koszul matrix factorization that is used in our construction of  the Koszul duality \(\mathrm{KSZ}_{\frg}\) is
  \(\mathbb{B}=[g-1,[X,R]]\), in notations of section \ref{sec:koszul-duality}. On the other hand we have shown in proposition~\ref{prop:unit} that \(\mathbb{B}=\mathcal{O}_{\dr}\) (see \eqref{eq:unit}) is the
  unit in \(\MF_{\Dr}\).

  Let us use notation from the definition~\ref{def:unfr-conv}. In particular, the the coordinates on the convolution space \(\frg\times G^2\times \frg\) are \((Z,g_1,g_2,X)\). Respectively,
  the convolution of  \(\mathbb{B}\star \mathbb{B}\) is computed as push-forward along \(\pi_3\) of Koszul matrix factorization:
  \[\pi_{1}^*(\mathbb{B})\otimes \pi_{2}^*(\mathbb{B})=
    K(g_1-1)\otimes \pi_{3}^*(\mathbb{B}).\]
  The last equality a consequence of the row transformation of the Koszul matrix factorizations and \(K(g_1-1)\) is the Koszul complex
  of \(g_1-1\). In more details:
  \begin{multline*}
    \pi_{1}^*(\mathbb{B})\otimes \pi_{2}^*(\mathbb{B})=[g_1-1,[X,Zg_1^{-1}]]\otimes
    [g_2-1,[X,g_1Zg_1^{-1}g_2^{-1}]]\\=[g_1-1,[X,Zg_1^{-1}]]\otimes [g_3g_1^{-1}-1,[X,g_1Zg_3^{-1}]],
  \end{multline*}
  the last equation we used variable \(g_3=g_2g_1\). Using row operations, see for example \cite[section 2.3]{OblomkovRozansky16} we eliminate all instances of \(g_1\) and \(g_1^{-1}\) from the second Koszul matrix factorization with help of the first matrix factorization. The result is
  \[[g_1-1,R_1]\otimes [g_3-1,R_2]\]
  where \(R_2\) does not depend on \(g_1\).
  The last matrix factorization has potential \(\pi^*_3(W_\dr)\) and this potential does not depend on \(g_1\) hence \(R_1=0\). On the other hand \([g_3-1,R_2]\) is the matrix factorization with the potential \(\pi^*_3(W_\dr)\). Thus \(R_2=[X,Zg_3^{-1}]\) by the uniqueness lemma \cite[Lemma 3.6]{OblomkovRozansky16} since the entries of \(g_3-1\)
  form a regular sequence.
  
  On the other hand
  \(\mathrm{KSZ}_{\frg}(M')\star\mathrm{KSZ}_{\frg}(M'')\) is a push-forward \(\pi_{3*}\) applied to
  \[\pi_1^*(M'\otimes_{\CC[\mathbf{X}]\otimes\Lambda^\bullet \frg} \mathbb{B})\otimes \pi_2^*(M''\otimes_{\CC[\mathbf{X}]\otimes\Lambda^\bullet \frg} \mathbb{B}).\]
  By the previous remark we can rewrite the last expression as
  \[\tilde{\pi}_1^*(\tilde{M'})\otimes_{\mathcal{A}} \pi^*_3(\mathbb{B})\otimes \tilde{\pi}_2^*(\tilde{M''})\otimes_{\mathcal{A}} K(g_1-1).\]
  where \(\mathcal{A}=\pi_1^*(\CC[\mathbf{X}]\otimes\Lambda^\bullet \frg)=\pi_2^*(\CC[\mathbf{X}]\otimes\Lambda^\bullet \frg)\) and \(\tilde{\pi}_i:
  \frg\times G^2\times \frg\to \frg^2\) is a composition of \(\pi_i\) with a projection
  along the group factor
  \(p_G:\frg\times G\times \frg\to \frg^2\).

  Let us define map \(\tilde{\pi}_3\) similarly \(\tilde{\pi}_3=p_G\circ \pi_3\).
  The maps \(\tilde{\pi}_1\), \(\tilde{\pi}_2\) coincide  with the map \(\tilde{\pi}_3\)
  on the locus defined by \(Z=\Ad_{g_1}Z\).
  Thus the presence of the  Koszul complex
  \(K(g_1-1)\) allows us to construct a homotopy
    \[\tilde{\pi}_2^*(\tilde{M}'')\otimes_{\mathcal{A}} K(g_1-1)\sim \tilde{\pi}_3^*(\tilde{M}'')\otimes_{\mathcal{A}} K(g_1-1),
    \]
    %\quad \tilde{\pi}_1^*(\tilde{M}')\otimes_{\mathcal{A}} K(Z-\Ad_{g_1}Z)\sim \tilde{\pi}_3^*(M')\otimes_{\mathcal{A}} K(Z-\Ad_{g_1}Z)\]
  Hence we arrive to the matrix factorization
  \[\pi_3^*(M'\otimes_{\CC[\calC]\otimes\Lambda^\bullet \frg} M''\otimes_{\CC[\mathbf{X}]\otimes\Lambda^\bullet \frg}\mathbb{B})\otimes K(g_1-1).\]
  Applying \(\pi_{3*}\) to the last expression we obtain the statement by the projection
formula.
\end{proof}

\begin{remark}
 It is predicted  by Kapustin and Rozansky \cite{KapustinRozansky10},   the monoidal structure on \(\Coh^\bullet\) induced by the Koszul duality, as in above proposition, is a deformation of the standard monoidal structure. We do not explore this fascinating structure in this paper. 
\end{remark}

\subsection{Localized Chern and co-Chern functors}
\label{sec:localized-chern-co}

Since \(\mathrm{loc}^{\rfs}\) is invertible, we can
use the Koszul duality functor in order to construct the functor
\[\CH^{\rfs}_{\rloc}:=\CH^{\rfs}\circ (\mathrm{loc}^{\rfs})^{-1}\circ \mathrm{KSZ}^*_\frg: \MF^{\rfs}\rightarrow \mathrm{D}^{\rper}(\Hilb_n(\CC^2),\mathbb{T}_{q,t},q^0t).\]

The localization functor does not seem to be invertible in  cases of  \(\bullet=\emptyset,\rfr\) however a construction of the
functor in the opposite direction does not require invertibility of the localization:
\[\HC^\bullet_{\rloc}:=\HC^\bullet\circ \mathrm{loc}^\bullet\circ \mathrm{KSZ}_\frg\colon\Coh^\bullet\rightarrow\MF^\bullet,\quad \bullet=\emptyset,\rfr,\rfs.\]

To complete the construction of functors \(\CH^{\rst}_{\rloc}\), that is discussed in the introduction we need the following

\begin{proposition}\label{prop:st-sq} The functor \(i_{\rfr}\) restricts to an isomorphism \(i_{\rfs}\) between the stable
  framed and stable categories:
  \[\begin{tikzcd}
      \MF^{\rst}\arrow[r,"\sim","i_{\rfs}"'] &\MF^{\rfs}\\
      \MF^{v}\arrow[r,"\sim", "i_{\rfr}"']\arrow[u,"j_{\rst}^*"]&\MF^{\rfr}\arrow[u,"j_{\rfs}^*"]
    \end{tikzcd}
  \]  
\end{proposition}
\begin{proof}
  The functors \(i_{\rfr}\)   and \(i_{\rfs}\) are defined as composition \(i_{w=0*}\circ \pi_{V}^*\). Both maps  have presentations \(i_{w=0}=\tilde{i}_{w=0}\times
  Id_{\calX^v}\), \(\pi_{V}=\tilde{\pi}\times Id_{\calX_v}\). On the other hand the stability condition on \(\calX^{\rfr}\) is defined in terms of the
  projection \(\calX^{\rfs}\to \calX^v\). Thus the maps in the diagram in the statement form a commutative diagram. Finally, the upper arrow is an isomorphism
  because \(\tilde{i}_{w=0*}\circ\tilde{\pi}_V^*\) is the Knorrer equivalence of categories \ref{prop:Kno}.
\end{proof}

Thus we can define the functors \(\CH^{\rst}_{\rloc}\), \(\HC^{\rst}\) by the commutative diagram:

\begin{equation}\label{eq:dia-stl}
\begin{tikzcd}
  \MF^{\rst}\arrow[r,"\sim","i_{\rfs}"']\arrow[rrrr,bend left=20,"\CH^{\rst}_{\rloc}"] &\MF^{\rfs}\arrow[r,shift left=1,"\CH^{\rfs}"]&\MF^{\rfs}_{\Dr}\arrow[l,shift left=1,"\HC^{\rfs}"]&\underline{\MF}^{\rfs}_{\Dr}\arrow[l,"\sim"',"\mathrm{loc}^{\rfs}"]&\mathrm{D}^{\rper}(\Hilb_n,\mathbb{T}_{q,t},q^0t)\arrow[l,"\sim"',"\mathrm{KSZ}_{\frg}"]\arrow[llll,bend left=20,"\HC^{\rst}_{\rloc}"]
  \end{tikzcd}
\end{equation}

As shown in \cite{OblomkovRozansky17}, that  there are monoidal functors from the  finite braid group
\[ 
  \Phi\colon \Br_n\to \MF^{\rst}.\]

Since \(\scX^v=\scX^v\times V_G\)  and the extra factor \(V_G\) does not participate in the convolution product \eqref{eq:conv-v} we have commuting diagram:
  \[\begin{tikzcd}
      \MF\arrow[r,"\pi_V^*"] &\MF^{v}\arrow[r,"\sim","i_{\rfr}"']&\MF^{\rfr}\\
      \Br_n^{\raff}\arrow[r,equal]\arrow[u,"\Phi^{\raff}"]&\Br^{\raff}_n\arrow[u,"\Phi^{\raff}"]\arrow[r,equal]&\Br_n^{\raff}\arrow[u,"\Phi^{\raff}"]
    \end{tikzcd}
  \]

In all cases the generators of the braid group are mapped to
Koszul matrix factorizations, hence the previous argument implies the following:
\begin{proposition} The functor \(\HC^\rst_{\rloc}\) is monoidal. Moreover,
its image is central:
%for any \(\calD\in \Coh^\bullet\) its image
  for any \(\calD\in \mathrm{D}^{per}(\Hilb)\)  and \(\beta\in \Br_n\)
  \[\HC^\rst_{\rloc}(\calD)\star \Phi(\beta)\sim \Phi(\beta)\star \HC^{\rst}_{\rloc}(\calD).\]
  %is central.
  %That is for any \(\calC\in\MF^\bullet\) there is an isomorphism
  %\[\theta_{\calC}\in \Hom(\calC\star\HC^\bullet(\calD),\HC^\bullet(\calD)\star\calC)\]
  %and these isomorphisms satisfy the axioms of the Drinfeld center.
\end{proposition}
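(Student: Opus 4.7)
The plan is to reduce both assertions to results already established earlier in the paper, with no new geometric input beyond what has been set up.

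\textbf{Monoidality.} By definition $\HC^\bullet_{\rloc} = \HC^\bullet \circ \mathrm{loc}^\bullet \circ \mathrm{KSZ}_\frg$, and each factor is monoidal. The functor $\mathrm{KSZ}_\frg$ is monoidal by the proposition immediately preceding this one, and $\HC^\bullet$ is monoidal by the last proposition of Section~\ref{sec:framed-stable-matrix}. It remains to argue that $\mathrm{loc}^\bullet = j_G^*$ is monoidal: the convolution structures on $\MF^\bullet_{\Dr}$ and $\underline{\MF}^\bullet_{\Dr}$ are defined by the same formulas in terms of the projections $\pi_1,\pi_2,\pi_3$ (extended to the closure of $G$ in $\frg$), and the open embedding is compatible with each of these projections, so $j_G^*$ commutes with $\star$. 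Composing three monoidal functors yields monoidality of $\HC^\bullet_{\rloc}$.

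\textbf{Centrality.} The key input is Proposition~\ref{prop:central}, which asserts that if $\calC^+,\calC^-$ are Koszul matrix factorizations with $\calC^+\star\calC^-\sim 1$, then $\calC^+\star\HC(\calD)\star\calC^-\sim\HC(\calD)$. The proof given there is geometric (base change along the large commutative rectangle, followed by a Knörrer-style contraction using the explicit form of $\mathrm{K}_{\CH}$), and I would first check that it transports verbatim to the $\bullet$-versions by restricting every pull-back and push-forward to the appropriate stable or framed loci. This requires only that the relevant stability conditions are preserved by the maps in the diagram, which they are since $\pi_{\Dr}$, $f_\Delta$, $\tilde f_\Delta$ and the convolution projections all send stable configurations to stable configurations in the $\bullet$-setting.

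With the $\bullet$-version of Proposition~\ref{prop:central} in hand, the rest is formal. By \cite{OblomkovRozansky17}, each generator $\sigma_i$ is sent by $\Phi^\bullet$ to a Koszul matrix factorization, and so is $\sigma_i^{-1}$, with $\Phi^\bullet(\sigma_i)\star\Phi^\bullet(\sigma_i^{-1})\sim 1$. Taking $\calC^\pm = \Phi^\bullet(\sigma_i^{\pm 1})$ gives
\[
\Phi^\bullet(\sigma_i)\star\HC^\bullet(\calF)\sim \HC^\bullet(\calF)\star\Phi^\bullet(\sigma_i)
\]
for every $\calF$, hence in particular for $\calF = \mathrm{loc}^\bullet\circ\mathrm{KSZ}_\frg(\calD)$. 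An induction on the word length in $\Br^\bullet$, using that $\Phi^\bullet$ is a group homomorphism and $\star$ is associative, extends commutation from the generators to arbitrary $\beta\in\Br^\bullet$.

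\textbf{Main obstacle.} The only non-bookkeeping step is confirming that Proposition~\ref{prop:central} indeed carries over to the framed, stable and framed-stable settings. The geometric diagram used there mixes Borel-equivariance, the kernel $\mathrm{K}_{\CH}$, and a $G$-equivariance argument; I expect everything to restrict cleanly to the open stable loci, but verifying that each base change and each contraction respects the added stability and covector constraints is the one place where real checking is required.
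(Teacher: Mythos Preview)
Your proposal is correct and matches the paper's approach: the paper gives no separate proof but simply remarks that the generators of the braid group are sent to Koszul matrix factorizations and that ``the previous argument implies'' the result, meaning exactly the reduction you spell out to the monoidality of $\mathrm{KSZ}_\frg$, $\mathrm{loc}^\bullet$, $\HC^\bullet$ and to Proposition~\ref{prop:central} applied to $\calC^{\pm}=\Phi^\bullet(\sigma_i^{\pm1})$. Your ``main obstacle'' (checking that the argument of Proposition~\ref{prop:central} restricts cleanly to the framed/stable loci) is a point the paper simply takes for granted.
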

\begin{proof}
   Let's pick an affine lift \(\tilde{\beta}\in \Br^{\raff}_n\) of \(\beta\in \Br_n\). Now we can use proposition~\eqref{eq:HC-CH-dia} and proposition~\ref{prop:st-sq}:
 \begin{multline*}
 \Phi(\beta^{-1})\star\HC^\rst_{\rloc}(\calD)\star \Phi(\beta)=
   \Phi(\beta^{-1})\star\HC^\rfs(\calD')\star \Phi(\beta)\\=
   j_{\rfs}^*(\Phi^{\raff}(\tilde{\beta}^{-1})\star \HC^{\rfr}(\calD'')\star\Phi^{\raff}(\tilde{\beta}))=
 j_{\rfs}^*( \HC^{\rfr}(\calD''))=\HC^\rst_{\rloc}(\calD),\end{multline*}
here \(\calD'=\mathrm{loc}^{\rfs}\circ \mathrm{KSZ}_\frg(\calD)\) and \(\calD''\)
is such that \(j_{\rfs}^*(\calD'')=\calD'\).
\end{proof}

The last proposition and the previous diagram imply
\begin{corollary}\label{cor:main1}  Functors \(\CH^{\rst}\) and \(\HC^{\rst}\) 
have the following properties

  \begin{itemize}
\item the functor \(\CH^{\rst}_{\rloc}\) is a right adjoint of \(\HC^{\rst}_{\rloc}\),
\item the functor \(\HC^{\rst}_{\rloc}\) is monoidal,
\item the image of \(\HC^{\rst}_{\rloc}\) commutes with the elements \(\Phi(\beta)\), \(\beta\in \Br_n\).
\end{itemize}

\end{corollary}

\section{Knot invariants}
\label{sec:knot}

In this section we discuss the relation between the Chern character and the functor of the braid closure that was used in our previous papers \cite{OblomkovRozansky16}. There we constructed a link invariant \(\HHHb(\beta)\) of the closure of a braid
\(\beta\in \Br_n\). Our construction of the invariant is based on the homomorphism:
\[\Br_n\to \MF^{\rst},\quad \beta\mapsto \calC^{\rst}_\beta,\]
and the braid closure procedure
\[\mathbb{L}\colon \MF^{\rst}\to \MF(\widetilde{\FHilb}_{y=0}^{\rfree},B\times\Tqt,q^0t,0),\quad \calC\mapsto j^*_{e,G}(\calC)^G,\] 
where \(\MF(\widetilde{\FHilb}_{y=0}^{\rfree},\Tqt,q^0t,0)\) is the category of coherent sheaves on  \(\widetilde{\FHilb}^{\rfree}_{y=0}\), where
\[\widetilde{\FHilb}^{\rfree}_{y=0}=\{(X,Y,v)\in \frb\times\frn\times V_B\;|\;\CC\langle X,Y\rangle\, v=V_B\},\]
where subindex \(y=0\) is chosen to indicate that the matrix \(Y\) is strictly upper-trianglular.

The space \(\widetilde{\FHilb}^{\rfree}_{y=0}\times G\) embeds inside \(\scX\) by a \(G\times B^2\)-equivariant map \(j_{e,G}\) used in the
construction of \(\mathbb{L}\):
\begin{equation}\label{eq:jeG}
  j_{e,G}\colon\widetilde{\FHilb}^{\rfree}_{y=0}\times G\rightarrow \scX,\quad (X,Y,g)\mapsto (X,g,Y,g,Y).\end{equation}

It is shown in \cite{OblomkovRozansky16} that the triply-graded space
\begin{equation}\label{eq:HHgeo}
\HHH^\bullet(\beta):=\CE_{\frn}(\mathbb{L}(\calC^{\rst}_\beta)\otimes \Lambda^\bullet V)^{T},\end{equation}
is an isotopy invariant of the closure \(L(\beta)\) of the braid \(\beta\) (after an explicit
grading shift that is expressed in terms of writhe of the braid).

The quotient of the space \(\widetilde{\FHilb}^{\rfree}_{y=0}\) by \(B\) is the free flag Hilbert space \(\FHilb^{\rfree}_{y=0}\).
This space is smooth, an explicit smooth atlas is presented in \cite{OblomkovRozansky16}. The condition
of commutativity $[X,Y]=0$
%of the matrices \(X,Y\)
defines an embedding of the flag Hilbert scheme
\(\FHilb_{y=0}\) inside \(\FHilb^{\rfree}_{y=0}\) .
%(here we follow conventions of \cite{GorskyNegutRasmussen16}).

The vector space \(V_B\) is a fiber of a trivial \(B\)-equivariant vector bundle on \(\widetilde{\FHilb}^{\rfree}_{y=0}\) which we denote the same symbol.
The vector bundle \(V_B\) becomes a vector bundle on the \(B\)-quotient \(\FHilb^{\rfree}_{y=0}\), we use notation \(\calB^\vee\)  for this vector bundle and
\(\calB\) for the dual bundle. Respectively, the trivial \(G\)-equivariant bundle with a fiber \(V_G\) over \(\mathscr{M}^{\rfs}\) descends to the vector
bundle on \(\Hilb\) which we denote by \(\calB^\vee\) and its dual by \(\calB^\vee\).

Given a finite-dimensional representation \(W_G\) of \(G\) we denote by \(W_B\) the restriction of this representation to \(B\). Given \(B^2\)-equivariant matrix
factorization \(\calF=(M,D,\partial)\) we denote define the tensor product with \(W_B\) by
\[\calF\otimes W_{B^{(1)}}=(M\otimes W_{B^{(1)}},D\otimes 1,\partial\otimes 1).
\]
Similarly, we define the tensor product \(\calG\otimes W_G\) for a \(G\)-equivariant matrix factorization.

The following is a simple consequence of the definition of the Chern functor:
\begin{proposition}
  For any \(m\) the functor \(\CH^{\rst}_{\rloc}\) intertwines the tensor powers of \(V^*\) and \(\calB\):
  \[\CH^{\rst}_{\rloc}(\calC\otimes (V^*_{B^{(1)}})^{\otimes m})=\CH^{\rst}_{\rloc}(\calC)\otimes \calB^{\otimes m},\quad \calC\in \MF^{\rfs}\]
  and this functor intertwines the action of the symmetric group \(S_m\). The analogous statement holds for the
  dual bundles: \(V\) and \(\calB^\vee\).

\end{proposition}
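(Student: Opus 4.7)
The plan is to trace the factor $V^*$ through each step in the definition of $\CH^{\rfs}_{\rloc}$ as a composition of $\CH^{\rfs}$, the localization $\mathrm{loc}^{\rfs}$ (which is an isomorphism by Proposition~\ref{prop:loc-iso}), and the linear Koszul duality $\mathrm{KSZ}^*_\frg$, and to observe that at every step $V^*$ behaves as a constant vector space factor, so that it eventually descends to the tautological bundle $\calB$ on $\Hilb_n(\CC^2)$ according to the convention ``$V^*$ descends to $\calB$'' fixed in the previous subsection.

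The starting observation I would exploit is that, in the framed setup, $V^*$ occurs as the coefficient space of the covector $w$, and the $B^2$-action on this factor is trivial, as forced by $B^2$-invariance of the framed potential $\Tr(w(g_1v_1-g_2v_2))$ (the nontriviality is absorbed into the $v_i$-factors). Consequently, tensoring $\calC\in \MF^{\rfs}$ with $V^*$, viewed as a $G$-equivariant, $B^2$-trivial constant bundle, produces an object whose $\Bd$-equivariant structure on the $V^*$-factor is also trivial.

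With this in hand, I would check in order that each ingredient of $\CH^{\rfs}$ commutes with $\otimes V^*$: the pull-back $f_\Delta^*$ carries a constant bundle to a constant bundle; the tensor product with the Koszul factor $\mathrm{K}_{\CH}$ depends only on the variables $(Z,g,X,h,Y)$; the Chevalley-Eilenberg functor $\CE_{\frnD}(\cdot)^T$ commutes with $\otimes V^*$ precisely because $V^*$ is $\frnD$- and $T$-trivial; and the push-forward $\pi_{\Dr*}$ satisfies the projection formula for the constant bundle $V^*$. Assembling these gives $\CH^{\rfs}(\calC\otimes V^*)\cong \CH^{\rfs}(\calC)\otimes V^*$ in $\MF^{\rfs}_{\Dr}$. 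Neither $\mathrm{loc}^{\rfs}$ (an open restriction) nor $\mathrm{KSZ}^*_\frg$ (Koszul duality with a complex depending only on $g\in\frg$) sees the $V^*$ factor, so the corresponding isomorphism descends to the $\Coh^{\rfs}$-level. Passing to $\Hilb_n(\CC^2)$ via the GIT identification then converts the trivial $G$-equivariant bundle with fiber $V^*$ into the tautological bundle $\calB$, giving the case $m=1$; iterating gives arbitrary $m$.

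The $S_m$-equivariance is automatic, since every step above is natural in the $V^*$ argument and therefore commutes with any endomorphism of $V^*$, in particular with the permutation action of $S_m$ on $(V^*)^{\otimes m}$. The statement for $V\leftrightarrow\calB^\vee$ is obtained by the same argument with $V^*$ replaced by $V$. The one point that genuinely requires verification, rather than just unwinding definitions, is the commutation of $\CE_{\frnD}(\cdot)^{T}$ with $\otimes V^*$, where the triviality of the $B^2$-action on the covector factor enters in an essential way; the rest of the argument is a straightforward diagram chase.
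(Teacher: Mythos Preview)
Your proposal is correct and aligns with the paper's approach: the paper gives no proof, stating only that the proposition is ``a simple consequence of the definition of the Chern functor'', and your argument is precisely the intended unpacking of that definition. One minor remark is that the detour through the covector factor $w$ to justify $B^2$-triviality of $V^*$ is not needed---tensoring with $V^*$ in the $(G\times B^2)$-equivariant category means, by default, tensoring with the $G$-module $V^*$ carrying the trivial $B^2$-action, which is exactly the structure that descends to $\calB$ under the GIT identification.
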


As corollary we obtain the second statement of theorem~\ref{thm:main}:
\begin{corollary}
  For any \(k\) and \(\beta\in \Br_n\) we have
    \[
    \CH^{\rst}_{\rloc}(\Phi(\beta\cdot FT^k))=
    \CH^{\rst}_{\rloc}(\Phi(\beta))\otimes \det(\calB)^k\]
  where \(FT\) is the full-twist braid.
\end{corollary}
\begin{proof}
  We show in \cite{OblomkovRozansky17} that \(\Phi(\beta\cdot FT^k)=\Phi(\beta)\otimes (\Lambda^n V^*_{B^{(1)}})^k\). Hence the previous proposition implies the statement.
\end{proof}

Also the analog of the proposition~\ref{prop:unit} holds in the stable category:
\begin{proposition}
  The element \(\calO\in \mathrm{D}^{\rper}(\Hilb_n,\mathbb{T}_{q,t},q^0t)\) is the convolution unit and
  \[\HC^{\rst}_{\rloc}(\calO)=\calC_{\parallel}^{\rst}.\]
\end{proposition}
\begin{proof}
  Let us define the units \(\calO^{\rfr}\) and \(\calO^{\rfs}\) in the categories
  \(\MF^{\rfr}\), \(\MF^{\rfs}\) by the formula analogous to \eqref{eq:unit}:
  \[  \calO^\bullet=[g-1,[X,Z]+wv]\in \MF_{\dr}^\bullet.%\quad i_{g=1}: \frg^2\times V_G^*\times V_G\to \scC^\bullet,\quad i_{g=1}(Z,X,w,v)=(Z,1,X,w,v).
\]
The argument from the proof of proposition~\ref{prop:unit} can repeated word by
word in the framed situation to obtain:
\[\HC^{\rfr}(\calO^{\rfr})=\calC^{\rfr}_{\parallel}.\]
Next we observe that \(j^*_{\rfs}(\calC_{\parallel}^{\rfr})=\calC_{\parallel}^{\rfs}\)
hence we combine this observation with proposition~\eqref{eq:HC-CH-dia}:
\begin{equation}\label{eq:unit-fr}
  \calC_\parallel^{\rfs}=j^*_{\rfs}(\HC^{\rfr}(\calO^{\rfr}))=\HC^{\rfs}(j^*_{\rfs}(\calO^{\rfr}))=
  \HC^{\rfs}(\calO^{\rfs}).  
\end{equation}
The equivalence \(i_{\rfs}\) sends the unit \(\calC^{\rfs}_{\parallel}\) to the unit
\(\calC_\parallel^\st\). Thus we have:
\[\calC^{\rst}_{\parallel}=i_{\rfs}(\calC_\parallel^\rfs)=i_{\rfs}\circ \HC^{\rfs}(\calO^{\rfs}).\]
Thus according to the diagram~\eqref{eq:dia-stl} we only need to check
that \(\calO^{\rfs}=\mathrm{loc}^{\rfs}\circ\mathrm{KSZ}_{\frg}(\calO)=
j^*_{\rfs}(\mathrm{KSZ}_\frg(\calO))\). Finally, matrix factorization \(\mathrm{B}\) the construction of the  Koszul functor \(\mathrm{KSZ}_{\frg}\) in
theorem~\ref{thm:KSZ} is equal to the matrix factorization \eqref{eq:unit-fr}.
\end{proof}

Thus we can interpret the homology \(\HHH^\bullet(\beta)\) geometrically in terms of sheaves on the flag Hilbert schemes, see \cite{OblomkovRozansky16} for a detailed discussion.
One unpleasant aspect of the flag Hilbert scheme is that the flag Hilbert scheme \(\FHilb\) is very singular and it is
hard to use the standard tools like localization for computations on it. Below we show how one can fix this issue
with the Chern functor. 

\begin{theorem}\label{thm:HC-homology}
  For any \(\beta\in \Br_n\) there is an isomorphism
  \[\HHH^\bullet(\beta)\cong\mathcal{E}\mathrm{xt}(\calO, \CH^{\rst}_{\rloc}(\calC_\beta^{\rst})\otimes \Lambda^\bullet \calB).\]
\end{theorem}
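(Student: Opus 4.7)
The plan is to chain three identifications: an intertwining, the $(\CH^{\rfs}_{\rloc}, \HC^{\rfs}_{\rloc})$ adjunction, and a projection-formula computation that matches $\mathcal{E}\mathrm{xt}$ against $\calC_{\parallel}^{\rfs}$ with the braid-closure functor $\mathbb{L}$.

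\textbf{Step 1 (fold the exterior powers inside the Chern functor).} Use the preceding proposition that $\CH^{\rfs}_{\rloc}$ intertwines tensor powers of $V^*$ with tensor powers of $\calB$ (and extends $S_m$-equivariantly to exterior powers) in order to rewrite
\[
\CH^{\rfs}_{\rloc}(\calC_\beta)\otimes\Lambda^\bullet\calB
\;\cong\;\CH^{\rfs}_{\rloc}\bigl(\calC_\beta\otimes\Lambda^\bullet V^*\bigr).
\]

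\textbf{Step 2 (adjunction).} Apply the adjunction between $\CH^{\rfs}_{\rloc}$ and $\HC^{\rfs}_{\rloc}$ with the structure sheaf $\calO_{\Hilb}$:
\[
\mathcal{E}\mathrm{xt}\bigl(\calO_{\Hilb},\,\CH^{\rfs}_{\rloc}(\calC_\beta\otimes\Lambda^\bullet V^*)\bigr)
\;\cong\;\mathcal{E}\mathrm{xt}\bigl(\HC^{\rfs}_{\rloc}(\calO_{\Hilb}),\,\calC_\beta\otimes\Lambda^\bullet V^*\bigr).
\]

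\textbf{Step 3 (identify $\HC^{\rfs}_{\rloc}(\calO_{\Hilb})$ with $\calC_{\parallel}^{\rfs}$).} The chain of monoidal equivalences
$\mathrm{D}^{\rper}(\Hilb)\cong\Coh^{\rfs}\cong\underline{\MF}^{\rfs}_{\Dr}\cong\MF^{\rfs}_{\Dr}$
(Koszul duality and Proposition \ref{prop:loc-iso}) sends the tensor unit $\calO_{\Hilb}$ to the convolution unit $\calO=[g-1,[X,Z]]\in\MF^{\rfs}_{\Dr}$. The framed stable analog of Proposition \ref{prop:unit}, whose proof is the same Koszul computation on the stable locus (the defining equations $Z=\Ad_h Y$, $g=1$ still form a regular sequence after imposing the openness condition), then yields $\HC^{\rfs}(\calO)=\calC_{\parallel}^{\rfs}$.

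\textbf{Step 4 (projection formula and braid closure).} Writing $\calC_{\parallel}^{\rfs}=(j_{\parallel}^{\rfs})_{*}\calO$ and using the projection formula, the right-hand side of Step 2 becomes
\[
\mathcal{E}\mathrm{xt}\bigl(\calO,\,(j_{\parallel}^{\rfs})^{*}(\calC_\beta\otimes\Lambda^\bullet V^*)\bigr).
\]
On the stable locus the map $j_{\parallel}^{\rfs}\colon \scX(B)^{\rfs}\hookrightarrow\scX^{\rfs}$ factors through the map $j_{e,G}$ used in the definition of $\mathbb{L}$: the extra $B$-factor in $\scX(B)$ parametrizes the conjugation redundancy, while setting $b=1$ recovers $j_{e,G}$. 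Split the $\CE_{\frb^{2}}(\cdots)^{T^{2}}$ taken by $\mathcal{E}\mathrm{xt}$ into (a) the derived invariants of the $B$ that acts freely on the $B$-factor of $\scX(B)$, which under the equivariant descent produce the $G$-invariants in $\mathbb{L}(\calC)=j_{e,G}^{*}(\calC)^{G}$, and (b) the remaining diagonal Borel acting on $\widetilde{\FHilb}^{\rfree}$, which yields the outer $\CE_{\frn}(\,\cdot\,)^{T}$. After matching $V^{*}$ against $V$ via the built-in dualization in the definition of $\mathcal{E}\mathrm{xt}$, one obtains
\[
\CE_{\frn}\bigl(\mathbb{L}(\calC_\beta)\otimes\Lambda^\bullet V\bigr)^{T}=\HHH^{\bullet}(\beta).
\]

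The main obstacle is Step 4: correctly decomposing $\CE_{\frb^{2}}$ on $\scX(B)^{\rfs}$ into the ``free'' $B$-part that reproduces the $G$-invariants of $\mathbb{L}$ and the ``diagonal'' $B$-part that produces the final $\CE_{\frn}(\,\cdot\,)^{T}$, and carefully tracking the $V$ vs.\ $V^{*}$ duality through the identification. Step 3 requires the analog of Proposition \ref{prop:unit} in the stable framed setting, but the Koszul/regular-sequence argument carries over intact; Steps 1 and 2 are immediate applications of structural properties proved earlier in the paper.
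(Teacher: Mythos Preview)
Your proposal is correct and follows the same three-step skeleton as the paper: use the intertwining of $\Lambda^\bullet\calB$ with $\Lambda^\bullet V^*$, apply the $(\CH^{\rfs}_{\rloc},\HC^{\rfs}_{\rloc})$ adjunction, and invoke the framed-stable analog of Proposition~\ref{prop:unit} to replace $\HC^{\rfs}_{\rloc}(\calO)$ by $\calC_\parallel^{\rfs}$.

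The only substantive difference is in your Step~4. The paper does not carry out the direct unwinding of $\mathcal{E}\mathrm{xt}(\calC_\parallel^{\rfs},\calC_\beta^{\rfs}\otimes\Lambda^\bullet V^*)$ in the framed-stable category. Instead it inserts one extra reduction: since $\calC_\parallel^{\rfs}=i_{\rcv,*}(\calC_\parallel^{\rst})$, adjunction along $i_{\rcv}$ gives
\[
\mathcal{E}\mathrm{xt}(\calC_\parallel^{\rfs},\calC_\beta^{\rfs}\otimes\Lambda^\bullet\calB)
=\mathcal{E}\mathrm{xt}(\calC_\parallel^{\rst},\calC_\beta^{\rst}\otimes\Lambda^\bullet V^*),
\]
and the right-hand side is already identified with $\HHH^\bullet(\beta)$ in section~13 of \cite{OblomkovRozansky16} (precisely because $\calC_\parallel^{\rst}$ is the Koszul matrix factorization for the $B$-orbit of $j_{e,G}(\widetilde{\FHilb}^{\rfree})$ and $B$ acts freely there). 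This bypasses entirely the decomposition of $\CE_{\frb^2}(\cdots)^{T^2}$ that you flag as ``the main obstacle''; your Step~4 would reprove that cited result from scratch inside $\MF^{\rfs}$, which is doable but unnecessary.
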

\begin{proof}
  First, observe an isomorphism of complexes of
  \(\CC[\frg]^G\)-modules:
  \[\mathcal{E}\mathrm{xt}(\calC^{\rst}_{\parallel},\calC^{\rst}_\beta\otimes \Lambda^\bullet V^*)\simeq \HHH^\bullet(\beta),\]
  where \(\calC_\parallel^{\rst}=j^*_{\rst}\circ \pi_V^*(\calC_\parallel)\) is the unit
  in \(\MF^{\rst}\).
 %It follows from the fact  that
 Indeed, the matrix factorization \(\calC^{\rst}_{\parallel}\) is
  the Koszul matrix factorization for the defining ideal of the \(B\)-orbit of the image \(j_{e,G}(\widetilde{\Hilb}^{\rfree})\), and
  since the action of \(B\) on the orbit is free, the statement follows (a more detailed  argument can be found in the section 13 of~ \cite{OblomkovRozansky16}).

  % Next observe that the units in \(\MF^{\rst}\) and \(\MF^{\rfs}\) are related by the push-forward: \(\calC_{\parallel}^{\rfs}=i_{\rcv,*}(\calC_{\parallel}^{\rst}).\) Hence we can use adjoint property of the push-forward to show that:
%   \[\mathcal{E}\mathrm{xt}(\calC^{\rfs}_\parallel,\calC^{\rfs}_\beta\otimes \Lambda^\bullet\calB )=
% \mathcal{E}\mathrm{xt}(i_{\rcv,*}(\calC^{\rfr}_\parallel),\calC^{\rfs}_\beta\otimes \Lambda^\bullet \calB)=
% \mathcal{E}\mathrm{xt}(\calC^{\rfr}_\parallel,\calC^{\rfr}_\beta\otimes \Lambda^\bullet \calB).
%\]

Finally, we use the adjointness of the functors \(\CH^{\rst}_{\rloc}\) and \(\HC^{\rst}_{\rloc}\):
%The last step is immediate from the fact that functors \(\CH\) and \(\HC\) are adjoint:
  \[\mathcal{E}\mathrm{xt}(\calC^{\rst}_\parallel,\calC^{\rst}_\beta\otimes \Lambda^\bullet\calB)=\mathcal{E}\mathrm{xt}(\HC^{\rst}_{\rloc}(\calO),\calC^{\rst}_\beta\otimes\Lambda^\bullet V^*)=
  \mathcal{E}\mathrm{xt}(\calO,\CH_{\rloc}^{\rst}(\calC^{\rst}_\beta)\otimes \Lambda^\bullet \calB).\]
\end{proof}

\section{Computations}
\label{sec:computations}
In this section we apply our methods to the homology of the closure of a sufficiently positive JM braid.
Recall that the variety \(\scX^{\rst}\) has a collection of the line bundles
\(\calL_i\), \(i=1,\dots,n-1\) which are the structure sheaves of \(\scX^{\rst}\) with twisted \(B^2\)-equivariant
structure. More precisely, if \(\chi_i\) is the character of \(B\) that evaluates the \(i\)-th diagonal element, then
\(\calL_i=\calO_{\scX^{\rst}}\langle\chi_i,0\rangle\) where the characters in the brackets are the twisting characters of
\(B^2\). Denote
\[
\calL^{\vec{a}}
=\otimes_{i=1}^{n-1}\calL^{a_i}_i.\]
We showed in \cite{OblomkovRozansky17} that
$\calC^{\rst}_{\delta^{\vec{a}}} =  \calC_{\parallel}^{\rst} \otimes \calL^{\vec{a}}$, hence
% Let us the main result of \cite{OblomkovRozansky17} where we show that
%
%\[\mathcal{E}\mathrm{xt}(\calC_{\parallel}^{\rfr},\calC^{\rfr}_{\beta\cdot \delta^{\vec{a}}}\otimes \Lambda^\bullet\calB)=
%  \mathcal{E}\mathrm{xt}(\calC_{\parallel}^{\rfr},\calC^{\rfr}_{\beta}\otimes\calL^{\vec{a}}\otimes\Lambda^\bullet\calB).\]
%
%%where \(\calL^{\vec{a}}=\otimes_{i=1}^{n-1}\calL^{a_i}_i.\)
%Thus
our main theorem implies that the homology of the closure of \(\delta^{\vec{a}}\)
%=\prod_i\delta_i^{a_i}\)
is computed
in terms of hypercohomology of \(\CH^{\rst}_{\rloc}(\calC_\parallel^{\rst}\otimes \calL^{\vec{a}})\). Below we prove some vanishing result that allows to use
localization to compute this hypercohomology by localization method.

\subsection{Very positive JM braids}
\label{sec:very-positive-jm}

In general the object \(\CH_{\rloc}^{\rst}(\calC^{\rst}_\beta)\), \(\beta\in \Br_n\) is an element of the category \(\mathrm{D}^{per}(\Hilb_n,\Tqt,q^0t)\).
There is no \(\Tqt\)-localization theory for objects of the last category. However, there is a natural folding functor:
\[\mathrm{Fold}:\mathrm{D}^b(\Hilb_n,q^0t^0)\to \mathrm{D}^{per}(\Hilb_n,q^0t)\]
which is a composition of the shift functor
\[\mathrm{D}^b(\Hilb_n,q^0t^0)\to \mathrm{D}^b(\Hilb_n,q^0t), (M_i,d)\to (t^{i}\cdot M_i,d)\]
and the functor that reduced the integral homological grading to two-periodic grading.

It is an interesting question for which \(\beta\in \Br_n\) the element \(\CH_{\rloc}^{\rst}(\calC^{\rst}_\beta)\) is from the smaller category
\(\mathrm{Fold}(\mathrm{D}^b(\Hilb_n,q^0t^0))\). For example, if \(\beta\) is a half-twist on three strands, the computations from
\cite{OblomkovRozansky16} suggests that  \(\CH_{\rloc}^{\rst}(\calC^{\rst}_\beta)\notin\mathrm{Fold}(\mathrm{D}^b(\Hilb_n,q^0t^0))\). However, we propose the following

\begin{conjecture}
  If \(\beta\) is such that \(L(\beta)\) is a link of planar singularity then
  \begin{enumerate}
  \item  \(\CH_{\rloc}^{\rst}(\calC^{\rst}_\beta)\) is isomorphic to 
    \(\mathrm{Fold}(\calF_\beta)\), where \(\calF_\beta\) is a \(\Tqt\)-equivariant sheaf (a complex with a support in a single homological degree).
    \item \(H^{>0}(\Hilb_n,\calF_\beta)=0\).
  \end{enumerate}
\end{conjecture}

Some support to the conjecture is provided in the paper \cite{GorskyKivinenOblomkov22}. Another statement in the direction of the conjecture is the following

\begin{theorem}\label{thm:very-pos}
  Suppose that \(\vec{a}\in \ZZ^{n-1}\) such that \(a_i\ge a_{j}-1\), \(i>j\). Then there is a \(\Tqt\)-equivariant sheaf
  \(\calF_{\delta^{\vec{a}}}\in \Coh(\Hilb_n,\Tqt,q^0t^0)\) such that
  \[\mathbb{H}(\CH_{\rloc}^{\rst}(\calC^{\rst}_{\delta^{\vec{a}+c\mathbf{1}}})\otimes \Lambda\calB)\simeq \mathrm{H}^0(\calF_{\delta^{\vec{a}}}\otimes\Lambda^\bullet\calB\otimes \det(\calB)^c),\]
  if \(c\) is sufficiently positive.
\end{theorem}

Our proof relies on a construction of Chern functor for dualizable link homology \cite{OblomkovRozansky19a},
that is discussed in the next section, and Broer's vanishing result \cite{Broer93}.
In forthcoming work we prove analogous result for the braids that are composition of Coxeter braids \cite{OblomkovRozansky17a} and JM braids.

\subsection{Another Chern functor}

To prove theorem \ref{thm:very-pos} we need to develop a version of the Chern functor for the dualizable homology from \cite{OblomkovRozansky19a}.
In this note we only treat the case of dualizable homology of closure of pure braids. An extension to general braids presents no technical difficulties and will be done
in a subsequent publication. Let us recall some details of \cite{OblomkovRozansky19a}.

Let us introduce a subspace \((G\times \frb)\times_{\frh} (G\times \frb)\subset
(G\times \frb)^2\) that consists of \((g_1,Y_1,g_2,Y_2)\) such that
the diagonal parts of \(Y_i\) are equal.
As explained in \cite{OblomkovRozansky19a}, the group of pure braids \(\PBr_n\subset \Br_n\) can be realized inside monoidal category:
\[\underline{\Phi}: \PBr_n\to (\underline{\MF}^{\st},\star),\]
\[\underline{\MF}^{\st}:=\MF((\frg\times (G\times \frb)\times_{\frh}(G\times \frb)\times V)^{\st},G\times B^2\times \Tqt,q^0t,W),\]\[ W(X,g_1,Y_1,g_2,Y_2,v)=\Tr(X(\Ad_{g_1}Y_1-\Ad_{g_2}Y_2))\]
where \(G\times B^2\times \Tqt\) is consistent with the action on \((\frg\times(G\times\frb)^2\times V)^{\st}\) and the convolution is defined by the
formula \eqref{eq:conv}.

Similarly to previously discussed case we define map \(j_{e,G}\) by the formula
\eqref{eq:jeG} and the functor \(\mathbb{L}\):
\[\mathbb{L}\colon \underline{\MF}^{\rst}\to \MF(\widetilde{\FHilb}^{\rfree},B\times\Tqt,q^0t,0),\quad \calC\mapsto j^*_{e,G}(\calC)^G,\] 
where \(\MF(\widetilde{\FHilb}^{\rfree},\Tqt,q^0t,0)\) is the category of coherent sheaves on the quotient \(\widetilde{\FHilb}^{\rfree}/B\), where
\[\widetilde{\FHilb}^{\rfree}=\{(X,Y,v)\in \frb\times\frb\times V_B\;|\;\CC\langle X,Y\rangle\, v=V_B\}.\]

The space \(\widetilde{\FHilb}^{\rfree}\) has a natural involution \(\mathfrak{F}\) that switches two copies of \(\frb\) and it is shown in \cite{OblomkovRozansky19a} that for any \(\beta\in \PBr_n\) we have an isomorphism
\begin{equation}
  \label{eq:F-sym}
 \mathbb{L}(\underline{\Phi}(\beta))=\mathfrak{F}^*(\mathbb{L}(\underline{\Phi}(\beta))).
\end{equation}

It is also shown  \cite{OblomkovRozansky19a} that the triply-graded space below is the link invariant of the closure \(L(\beta)\) 
\[\mathrm{HXY}^\bullet(\beta):=\CE_{\frn}(\mathbb{L}(\underline{\Phi}(\beta))\otimes \Lambda^\bullet V)^{T}.\]
 
The LHS of the last formula is element of category \(\mathrm{D}^{\per}(\CC_x^n\times\CC^n_y,q^0t)\) just as LHS of \eqref{eq:HHgeo} is element of \(\mathrm{D}^{\per}(\CC_x^n,q^0t)\).
It is shown in \cite{OblomkovRozansky19a} that there is a relation  these two types of homology
\[\mathrm{H}^\bullet(\beta)=\mathrm{HXY}^\bullet(\beta)\stackrel{L}{\otimes}_{\CC[\mathbf{x},\mathbf{y}]}\CC[\mathbf{x}],\]
where \(\CC[\mathbf{x},\mathbf{y}]=\CC[x_1,\dots,x_n,y_1,\dots,y_n]\) and
\(\CC[\mathbf{x}]=\CC[x_1,\dots,x_n]\).

In this section we construct another version of the Chern functor:
\[\underline{\CH}:\underline{\MF}^{\rst}\to \mathrm{Coh}(\frg\times\frg\times V, G\times \Tqt,q^0t),\]  
that allows us to compute our link homology. To explain the construction we need to introduce few auxiliary spaces.

Let us define a \(G\times B\times B\)-equivariant space
\[\underline{\scY}=\widetilde{\FHilb}^{\rfree}\times G\times G\] 
with \(G\times B^2\)-action given by:
\[(h,b_1,b_2)\cdot (X,Y,v,k,g)=(\Ad_{b_1}X,\Ad_{b_2}Y,b_1v,b_2kb_1^{-1},hgb_2^{-1}).\]
To define \(\underline{\CH}\) we need two maps:
\[p:\scY\to \widetilde{\FHilb}^{\rfree}, \quad (X,Y,v,k,g)\mapsto (X,Y,v),\]
\[\mu^{\oplus 2}:\scY\to \frg^2\times V,  \quad (X,Y,v,k,g)\mapsto (\Ad_{gk}X,\Ad_{g}Y,gkv)\]
 
Finally, we define a two periodic complex \(\mathrm{K}_\Delta=\mathrm{K}(k\in B)\)
on \(\underline{\scY}\). Then we have for \(\beta\in \PBr_n\)
\[\underline{\CH}(\beta)=\mu^{\oplus 2}_*(p^*(\mathbb{L}(\underline{\Phi}(\beta))\otimes \mathrm{K}_\Delta))^{B^2}.\]

\begin{proposition}\label{prop:HXY}
  For any \(\beta\in \PBr_n\) we have
  \[\mathbb{H}((\underline{\CH}(\beta)\otimes\Lambda^\bullet\calB)^G)=\mathrm{HXY}^\bullet(\beta)\]
\end{proposition}
\begin{proof}
  Indeed, \(\mathbb{H}((\underline{\CH}(\beta)\otimes\Lambda^i\calB)^G)=\mathbb{H}(p^*(\mathbb{L}(\underline{\Phi}(\beta))\otimes \mathrm{K}_\Delta\otimes\Lambda^i\calB)^{G\times B^2})\). Let us use notation \(B^{(i)}\) for the \(i\)-th factor in \(B^2\).
  Since the action of  \(G\times B^{(2)}\) on \(\und{\scY}\) is free,
  \((p^*(\calS)\otimes \mathrm{K}_\Delta\otimes\Lambda^i \calB)^{G\times B^{(2)}}\) is  homotopy equivalent to matrix factorization \(\calS\otimes \Lambda^i\calB\)
  on \(\widetilde{\FHilb}^{\rfree}\).  Hence \(\mathbb{H}((\calS\otimes\Lambda^i\calB)^B)=\mathbb{H}(p^*(\calS)\otimes \mathrm{K}_\Delta\otimes\Lambda^it\calB)^{G\times B^2})\) and the statement follows.
\end{proof}

The \(\GL_n\)-cover \(\widetilde{\Hilb}\) of \(\Hilb\) is a subvariety of \(\frg^2\times V\), that is we have an embedding \(i_{\Hilb}:\widetilde{\Hilb}\to \frg^2\times V\).  Respectively, \(\widetilde{\Hilb}_{y=0}\) is  a subvariety
defined by the nilpotency of \(Y\) in \((X,Y,v)\in\widetilde{\Hilb}\).
The subvariety  \(\widetilde{\Hilb}_{y=0}\) is a complete intersection and let us denote
by \(\mathrm{K}(\widetilde{\Hilb}_{y=0})\) the corresponding Koszul complex.

\begin{conjecture}
  For any \(\beta\in \PBr_n\) we have
  \[i_{\Hilb*}(\CH^{\st}_{\loc}(\beta))=
   \underline{\CH}(\beta) \otimes \mathrm{K}(\widetilde{\Hilb}_{y=0})
  \]
\end{conjecture}

If the complex \(\mathbb{L}(\underline{\Phi(\beta)})=\calS_\beta\)  is  obtained by folding from a bounded complex \(\hat{\calS}_\beta\in \mathrm{D}^b(\widetilde{\FHilb}^{\rfree},B\times\Tqt,q^0t^0)\),
\(\calS_\beta=\mathrm{Fold}(\hat{\calS}_\beta)\) then we call such braid
{\it unfoldable}.  If \(\beta\) is unfoldable we can define an unfolded
version of the Chern functor as
\[\underline{\CH}^b(\beta)=\mu^{\oplus 2}(p^*(\hat{\calS}_\beta\otimes \hat{K}_\Delta)),\]
where \(\hat{K}_{\Delta}\in D^b(\underline{\scY},G\times B^2\times \Tqt,q^0t^0)\) is
the Koszul complex of \(k\in B\). The following statement follows immediately from the
above definitions:

\begin{proposition}\label{prop:vanishXY}
  For an unfoldable \(\beta\in \PBr_n\) such that \(\mathbb{H}^{>0}(\underline{\CH}^b(\beta)^G)=0\) we have:
  \[\chi_{\Tqt}((\underline{\CH}^b(\beta)\otimes \Lambda^i\calB)^G)=\dim_{q,t}(\mathrm{HXY}^i(\beta)).\]
\end{proposition}

In particular, JM braids \(\beta=\delta^{\vec{a}}\) are unfoldable since
\[\calS_{\beta}=\mathrm{Fold}(\hat{\calS}_\beta),\quad
  \hat{\calS}_\beta=\hat{K}([X,Y]_{++})\otimes \calL^{\vec{a}}\]
where \(Z_{++}\) is the strictly upper-triangular part of a matrix \(Z\) and
\[\hat{K}([X,Y]_{++})\in \mathrm{D}^b(\widetilde{\FHilb},B\times\Tqt,q^0t^0)\] is the Koszul complex for the subvariety defined by \([X,Y]=0\).

The subspace \(\widetilde{\Hilb}\subset \frg^2\times V\) is a closed smooth subvariety of codimension \(n^2-n\).
Let us denote by \(\calO_{\widetilde{\Hilb}}\in D^b(\frg^2\times V,G\times\Tqt,q^0t^0)\) the structure sheaf of the corresponding manifold.

\begin{proposition}\label{prop:pullbO}
  Let \(\beta=\delta^{\vec{a}}\) then
  \[\underline{\CH}^b(\beta)=\mathrm{K}(\widetilde{\Hilb})\otimes \mu^{\oplus 2}_*(\calL^{\vec{a}})\]
\end{proposition}
\begin{proof}
  It is enough to show the statement for the case \(\vec{a}=0\). The space \(\widetilde{\Hilb}\) is smooth hence there is a \(G\times \Tqt\)-invariant 
  cover \(\frg^2\times V=\bigcup_i U_i\) such that \(\wt{\Hilb}\cap U_i\) is defined by
  \(n^2-n\) equations \(E_i^j\), \(j=1,\dots,n^2-n\). Let \(\hat{K}(E_i)\) be the corresponding Koszul complex.
 
  The preimage of \(\wt{\Hilb}\) under \(\mu^{\oplus 2}\) is the flag Hilbert scheme
  \(\FHilb\) that is defined by the \(n^2-n\) equations: \(k\in\frb\), \([X,Y]_{++}=0\),
  \(X,Y\in\frb\). Hence there is isomorphism \(f_i: p^*(\hat{\mathrm{K}}([X,Y]_{++}))\otimes \hat{K}_{\Delta}\to \mu^{\oplus2*}(\hat{K}(E_i^\bullet)) \).

  On the intersection \(U_i\cap U_j\) there is an isomorphism
  \(h_{ji}: \hat{K}(E_i^\bullet)\to\hat{K}(E_j^\bullet)\) such that \(g_{ij}=\mu^{\oplus 2*}(h_{ji})\) is strictly upper-triangular.
  Thus \(\{g_{ij}\}\) form a \(G\times B^2\)-equivariant cocycle with values in the unipotent  group.
  
   On the other hand \(\underline{\scY}/B^2\) retracts to \(\Fl\times\Fl\) thus has no odd cohomology. Hence the cocycle \(\{g_{ij}\}\) is a coboundary: there are
   \(\phi_i\) such that \(\phi_i \phi_j^{-1}=f_i^{-1}g_{ij} f_i\).
   Thus \(\{f_i\phi_i\}\) define desires isomorphism between \(\mu^{\oplus 2*}(\calO_{\wt{\Hilb}})\) and \(p^*(\hat{\mathrm{K}}([X,Y]_{++}))\otimes \hat{K}_\Delta\).
\end{proof}

\begin{proposition}\label{prop:freeL}
  If \(\vec{b}\) satisfies conditions \(a_i\ge a_j-1\) for any \(i,j\) such that
  \(i>j\). Then
  \begin{enumerate}
  \item \(\mathbb{H}^{>0}(\underline{\CH}^b(\delta^{\vec{a}}))=0\)
  \item \(\underline{\CH}^b(\delta^{\vec{a}})^G\) is a free module over
    \(\CC[\mathbf{x}]\)
  \end{enumerate}
\end{proposition}
\begin{proof} 
  First we observe that \(\underline{\scY}/B^2\) is a open piece of \(\tilde{\frg}\times\tilde{\frg}\times V\). The map \(\mu^{\oplus 2}\) is equal to two copies of
  the moment map \(\mu^{\oplus 2}=\mu\times \mu\times 1\).

  The open condition defining
  \(\und{\scY}/B^2\) is pulled back from \(\frg^2\times V\) along \(\mu^{\oplus 2}\).
  Thus it is to prove the statement that  the sheaf
  \[\calG_{\vec{a}}=(\mu\times\mu)_*(\calO\boxtimes \calL_{\vec{a}})\]
  has no higher cohomology and free over \(\CC[\frg^{(1)}]^G\) where \(\frg^{(1)}\) is the first copy  \(\frg^2\).

  The second fact is standard: there is a \(G\)-equivariant natural projection from \(\tilde{\frg}\to \frh\) and the \(G\)-invariant  functions on the fibers are constant,
  % \(\mu_*(\calO)=\CC[\frg]^G\otimes H^*(\Fl)\) and \(\CC[\frg]^G\otimes H^*(\Fl)=\CC[\mathbf{x}]\)
  see
  \cite{ChrisGinzburg}. The vanishing result is a modification of the result from
  \cite{Broer93}.
Indeed, it is shown in the cited paper that
\begin{equation}\label{eq:Bro}
  H^{>0}(\calL_{\vec{a}}|_{T^*\Fl})=0\end{equation}
if the conditions of the statement hold.
  On the other hand \(\calL_{\vec{a}}|_{\mathrm{T}^*\Fl}=\calL_{\vec{a}}\otimes \mu^*(\mathrm{K}(\mathcal{N}))\) where \(\mathrm{K}(\mathcal{N})\) the Koszul complex of
  the nilpotent cone. Thus \(H^*(\calL_{\vec{a}}|_{\mathrm{T}^*\Fl})\) is computed by a spectral sequence \(E_1\) terms \(H^j(\calL_{\vec{a}})\otimes\Lambda^i \frh\). Thus
  we must have \(H^{>0}(\calL_{\vec{a}})=0\) to ensure \eqref{eq:Bro}.
  \end{proof}

\subsection{Proof of theorem~\ref{thm:very-pos}}
\label{sec:proof-theorem}
Proposition~\ref{prop:pullbO} and proposition~\ref{prop:pullbO} there is a sheaf
\(\tilde{\calF}_{\delta^{\vec{a}}}\) on \(\wt{\Hilb}\) such that
\(\underline{\CH}^{b}(\delta_{\vec{a}})=j_{\wt{\Hilb}*}(\tilde{\calF}_{\delta^{\vec{a}}})\).
Let us denote by \(\underline{\calF}_{\delta^{\vec{a}}}\) be the corresponding sheaf on
\(\Hilb\). 
 
Since \(\det(\calB)\) is an ample bundle on \(\Hilb\) for sufficiently positive
\(c\) the sheaf \(\underline{\calF}_{\delta^{\vec{a}}}\otimes \Lambda^i\calB\otimes \det(\calB)^c\) has no higher  cohomology all \(i\). Hence for \(\delta^{\vec{a}+c\vec{1}}\) the conditions of proposition~\ref{prop:vanishXY} are satisfied and we have
\[H^0(\underline{\calF}_{\delta^{\vec{a}+c\vec{1}}}\otimes \Lambda^i(\calB))=\mathrm{HXY}^i(\delta^{\vec{a}+c\vec{1}}).\]

Finally, we observe by proposition~\ref{prop:freeL} the \(\CC[\mathbf{x},\mathbf{y}]\)-module
\(\mathrm{HXY}^\bullet(\delta^{\vec{a}+c\mathbf{1}})\) is a free over \(\CC[\mathbf{x}]\).
Hence by the symmetry property it is a free module over \(\CC[\mathbf{y}]\). Thus we have
\begin{equation}\label{eq:manyHi}
\mathrm{H}^\bullet(\delta^{\vec{a}+c\vec{1}})=\mathrm{HXY}^\bullet(\delta^{\vec{a}+c\vec{1}})|_{y=0}=\mathrm{H}^0(\underline{\calF}_{\delta^{\vec{a}}}\otimes\calO_{\Hilb_{y=0}}\otimes\det(\calB)^c\otimes \Lambda^\bullet\calB), \end{equation}
where \(\Hilb_{y=0}\) is defined by vanishing of \(y\in H^0(\calB)\). Thus the theorem follows with \(\calF_{\delta^{\vec{a}}}=\underline{\calF}_{\delta^{\vec{a}}}\otimes\calO_{\Hilb_{y=0}}\).

\subsection{Proof of theorem~\ref{thm:loc}}\label{sec:proof-JM}
\begin{corollary}\label{cor:chi-JM}
  Suppose \(\vec{a}\) and \(c\) satisfy conditions of theorem~\ref{thm:very-pos} then
  \[\dim_{q,t}\mathrm{H}^i(\delta^{\vec{a}+c\vec{1}})=\chi_{\Tqt}(\hat{K}([X,Y]_{++})\otimes\calL^{\vec{a}}\otimes \det(\calB)^c\otimes\Lambda^i\calB)\]
  where \[\hat{K}([X,Y]_{++})\in D^b(\FHilb_{y=0}^{\rfree},\Tqt,q^0t^0)\] is the \(\Tqt\)-equivariant Koszul complex of relations
  \([X,Y]_{i,j}=0\), \(j-i>0\). 
\end{corollary}
\begin{proof}
  Indeed by the equation \eqref{eq:manyHi} it is enough to show that
  \[\chi_{\Tqt}(\underline{\CH}^b(\delta_{\vec{a}})\otimes\det(\calB)^c\otimes\Lambda^i\calB)=\chi_{\Tqt}(\hat{K}([X,Y]_{++})\otimes \det(\calB)^c\otimes\Lambda^i\calB).\]
  The last equality is derived by the same argument as proposition~\ref{prop:HXY} since \[\mathrm{Fold}(\hat{K}([X,Y]_{++})\otimes \calL^{\vec{a}}\otimes
    \det(\calB)^c)=\mathbb{L}(\Phi(\delta_{\vec{a}+c\vec{1}})).\]
\end{proof}

To compute the Euler characteristics of a \(\Tqt\)-equivariant complex on \(\FHilb_{y=0}^{\rfree}\) one can apply localization formula.
A description of the \(\Tqt\)-fixed locus in \(\FHilb_{y=0}^{\rfree}\) is given \cite{OblomkovRozansky17a}.
In the same paper the localization formula for the \(\Tqt\)-equivariant Euler characteristics of \(\hat{K}([X,Y])\otimes \calL^{\vec{a}}\otimes
    \det(\calB)^c\) is derived. The formula in \cite{OblomkovRozansky17a} matches the formula in the statement of theorem~\ref{thm:loc}. 

\end{document}